\theoremstyle{plain}
\newtheorem{thm}{Theorem}[section] 
\newtheorem{cor}[thm]{Corollary}
\newtheorem{lem}[thm]{Lemma}
\newtheorem{prop}[thm]{Proposition}
\newtheorem*{thm*}{Theorem}
\newtheorem*{prop*}{Proposition}
\newtheorem*{cor*}{Corollary}
\theoremstyle{definition}
\newtheorem{defi}[thm]{Definition}
\newtheorem{ex}[thm]{Example}
\newtheorem{rem}[thm]{Remark}
\newcommand{\NN}{{\mathbb N}}
\newcommand{\ZZ}{{\mathbb Z}}
\newcommand{\RR}{{\mathbb R}}
\newcommand{\CC}{{\mathbb C}}
\newcommand{\N}{{\mathcal N}}
\newcommand{\U}{{\mathcal U}}
\newcommand{\G}{{\mathscr G}}
\newcommand{\ip}[2]{\left\langle {#1}\hspace{0.05cm}, \hspace{0.05cm}{#2}\right \rangle}
\newcommand{\varps}{{\varepsilon}}
\newcommand{\htens}{\bar{\otimes}}
\newcommand{\tens}{\otimes}
\renewcommand{\Re}{{\operatorname{Re}}}
\newcommand{\spann}{{\operatorname{span}}}
\newcommand{\To}{\longrightarrow}
\newcommand{\Aut}{\operatorname{Aut}}
\newcommand{\id}{\operatorname{id}}
\newcommand{\alg}{{\operatorname{alg}}}
\newcommand{\conv}{{\operatorname{conv}}}
\newcommand{\I}{{\operatorname{I}}}
\newcommand{\T}{{\operatorname{(T)}}}
\renewcommand{\leq}{\leqslant}
\renewcommand{\geq}{\geqslant}
\newcommand{\twoone}{{\operatorname{II}_1}}
\address{David Kyed,
Mathematisches Institut,
Georg-Au\-gust-Uni\-versi\-t{\"a}t G{\"o}t\-ting\-en,
Bunsenstra{\ss}e 3-5,
D-37073 G{\"o}ttingen, 
Germany.}
\email{kyed@uni-math.gwdg.de}
\urladdr{www.uni-math.gwdg.de/kyed}
\begin{document}
%\maketitle
\begin{center}
\textbf{\large{Uniqueness of group-measure space Cartan subalgebras}} 

%\vspace{0.5cm}
\begin{center}
\setlength{\unitlength}{1mm}
\begin{picture}(100, 10)
  \put(55, 5){\line(1, 0){64}}
  \put(48,5){\circle*{1}}
  \put(50, 5){\circle*{1.8}}
  \put(52,5){\circle*{1}}
  \put(45, 5){\line(-1, 0){64}}
\end{picture}
\end{center}

David Kyed\\
\vspace{0.2cm}
After Adrian Ioana's IHP lectures\\
\vspace{0.2cm}
May 2011
\end{center}

\vspace{0.5cm}
These lecture notes provide an account on four lectures given by Adrian Ioana at the Institut Henri Poincar{\'e} in May 2011 regarding a number of ``uniqueness of Cartan'' type results obtained in a recent series of papers \cite{popa-vaes-grp-measure-space-decomp, chifan-peterson, vaes-cohomology-and-unique-cartan}. The main goal is to give a presentation of (a slightly scaled down version of )the main result by Chifan and Peterson in \cite{chifan-peterson} which is as reasonably self contained and low tech as possible.  The notes follow Vaes' strategy  of proof, as presented in \cite{vaes-cohomology-and-unique-cartan}, but we restrict attention to the smaller class groups that appeared in Ioana's lectures. Hopefully this restriction makes the text easily accessible to readers not familiar with the deformation/rigidity techniques that are used in the proof. \\

\vspace{0.2cm}
\noindent{\emph{Disclaimer:}}
It should be stressed that the notes are purely expository and thus claiming no originality whatsoever. Furthermore, it should be made clear that the author is by no means an expert in the field of deformation/rigidity  and it is inevitable that this fact is reflected in the the text by means of naive points of view and perhaps even honest mistakes\footnote{Needless to say, any such mistakes are of course entirely my responsibility!}. It is, however, my hope that this lack of expertise is also reflected in the text in form of slightly more detailed proofs than one often finds in the literature and that this will make the results easy to digest for (other) beginners. Comments and corrections of all sorts will be greatly appreciated.  \\

\vspace{0.2cm}
\noindent{\emph{Acknowledgements:}}
Firstly, thanks are due to Vadim Alekseev and Henrik D.~Petersen; the notes basically consist of a typed up version of our discussions concerning the material covered. Thanks are of course also due to Adrian Ioana; firstly for giving a great lecture series; secondly for providing us with a proof of a lemma that we were not able to find ourselves (although, in hindsight, we really should have been) and thirdly for a number of valuable comments and corrections on a preliminary version of these notes. Lastly, it is a great pleasure to thank the organizers, Damien Gaboriau, Sorin Popa and Stefaan Vaes, of the focused semester on \emph{von Neumann algebras and ergodic theory of group actions} where all this took place.\\

\vspace{0.2cm}
\noindent{\emph{Standing assumptions and notation:}}
Throughout the text, all generic von Neumann algebras are assumed to have separable predual and all discrete groups are assumed to be countable. Furthermore, all group actions on measure spaces are assumed to be essentially free, ergodic and measure preserving unless specified otherwise. 
For a finite von Neumann algebra $M$ with a fixed normal, faithful tracial state $\tau$ we denote by $L^2(M)$ the GNS-space arising from $\tau$ and consider $M$ as included in $B(L^2(M))$. For a vector $\xi\in L^2(M)$ and an operator $a\in M$ we will denote by $\xi a$ the vector $Ja^*J\xi$, where $J$ is the modular conjugation arising from the trace $\tau$. Lastly, we will abbreviate the strong operator topology SOT and denote the operator norm on $M$ and the 2-norm on $L^2(M)$ by $\|\cdot\|_\infty$ and $\|\cdot\|_2$, respectively.

\section{Introduction and statement of the results}
Consider a countable discrete group $\Gamma$ acting essentially freely, ergodically and probability measure preserving (p.m.p.) on a standard 
Borel space $(X,\mu)$. This induces a trace preserving action of $\Gamma$ on $L^\infty(X)$, given by $(\gamma\cdot f)(x):=f(\gamma^{-1}x)$,  and the crossed product von Neumann algebra $M:=L^\infty(X)\rtimes \Gamma$ is a $\I\I_1$-factor. 
One can consider several equivalence relations between such actions and we will here be concerned with the following three:
\begin{defi}
Two actions $\Gamma \curvearrowright (X,\mu)$ and $\Lambda \curvearrowright (Y,\nu)$ are called
\begin{itemize}
\item \emph{Conjugate} if there exists a group isomorphism $\delta\colon \Gamma \to \Lambda$ and a measure space isomorphism $\theta\colon X\to Y$ such that $\theta(\gamma x)=\delta(\gamma)\theta (x)$ for all $\gamma\in \Gamma$ and all\footnote{here, and throughout the text, ``all $x\in X$'' really means ``for $\mu$-almost all $x\in X$''.} $x\in X$.
\item \emph{Orbit equivalent} if there exists a measure space isomorphism $\theta\colon X\to Y$ such that $\theta(\Gamma x)=\Lambda\theta(x)$ for all $x\in X$.
\item \emph{$W^*$-equivalent} if $L^\infty(X)\rtimes \Gamma$ is isomorphic to $L^\infty(Y)\rtimes \Lambda$.
\end{itemize}
\end{defi}
Clearly conjugate actions are orbit equivalent and due to the following theorem orbit equivalence implies $W^*$-equivalence.
\begin{thm}[Singer, \cite{singer-1955}]
Two actions $\Gamma \curvearrowright (X,\mu)$ and $\Lambda \curvearrowright (Y,\nu)$ are orbit equivalent if and only if there exists a  $*$-isomorphism $\varphi\colon L^\infty(X)\rtimes \Gamma \to L^\infty(Y)\rtimes \Lambda$ such that $\varphi(L^\infty(X))=L^\infty(Y)$.
\end{thm}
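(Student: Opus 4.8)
The plan is to prove both implications directly from the definitions, the non-trivial direction being that orbit equivalence produces a $*$-isomorphism carrying $L^\infty(X)$ onto $L^\infty(Y)$. First I would dispense with the easy direction: given a $*$-isomorphism $\varphi\colon L^\infty(X)\rtimes\Gamma\to L^\infty(Y)\rtimes\Lambda$ with $\varphi(L^\infty(X))=L^\infty(Y)$, one recovers a measure space isomorphism $\theta\colon X\to Y$ dual to $\varphi|_{L^\infty(X)}$ (using that $*$-isomorphisms of $L^\infty$ of standard spaces are implemented by measure isomorphisms, after adjusting by the Radon--Nikodym derivative to make things measure preserving). Since $\varphi$ conjugates the normalizer of $L^\infty(X)$ in $L^\infty(X)\rtimes\Gamma$ onto the normalizer of $L^\infty(Y)$ in $L^\infty(Y)\rtimes\Lambda$, and these normalizers implement exactly the orbit equivalence relations $\R_\Gamma$ and $\R_\Lambda$, one concludes that $\theta$ sends $\Gamma$-orbits to $\Lambda$-orbits.

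For the substantive direction, suppose $\theta\colon X\to Y$ is a measure isomorphism with $\theta(\Gamma x)=\Lambda\theta(x)$ for a.e.\ $x$. The key step is to build, for each $\gamma\in\Gamma$, a measurable ``cocycle'' map: since $\theta(\gamma x)$ and $\theta(x)$ lie in the same $\Lambda$-orbit, essential freeness of the $\Lambda$-action lets us define $c(\gamma,x)\in\Lambda$ to be the unique group element with $\theta(\gamma x)=c(\gamma,x)\,\theta(x)$; measurability of $c(\gamma,\cdot)$ follows by partitioning $X$ into the (measurable) sets $\{x: \theta(\gamma x)=\lambda\theta(x)\}$ over $\lambda\in\Lambda$. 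One checks $c$ satisfies the cocycle identity $c(\gamma_1\gamma_2,x)=c(\gamma_1,\gamma_2 x)c(\gamma_2,x)$. I would then define $\varphi$ on the canonical generators of $L^\infty(X)\rtimes\Gamma$: on $L^\infty(X)$ set $\varphi(f)=f\circ\theta^{-1}$, and on the unitaries $u_\gamma$ implementing the $\Gamma$-action set $\varphi(u_\gamma)=\sum_{\lambda\in\Lambda} (\mathbf 1_{A_{\gamma,\lambda}}\circ\theta^{-1})\,v_\lambda$, where $A_{\gamma,\lambda}=\{x: c(\gamma,x)=\lambda\}$ and $v_\lambda$ are the unitaries for the $\Lambda$-action; this is a unitary in $L^\infty(Y)\rtimes\Lambda$ because the $A_{\gamma,\lambda}$ partition $X$ and the $v_\lambda$ are a ``partition of unitaries'' over the disjoint sets. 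Using the cocycle identity and the covariance relation $u_\gamma f u_\gamma^* = \gamma\cdot f$ one verifies that $\varphi$ preserves the defining relations, hence extends to a $*$-homomorphism; measure preservation of $\theta$ makes it trace preserving, hence injective, and the symmetric construction applied to $\theta^{-1}$ gives a two-sided inverse, so $\varphi$ is a $*$-isomorphism. By construction $\varphi(L^\infty(X))=L^\infty(Y)$.

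I expect the main obstacle to be the careful measurability bookkeeping: verifying that $x\mapsto c(\gamma,x)$ is genuinely measurable (which requires countability of $\Lambda$ and the null-set conventions), that the partitions $\{A_{\gamma,\lambda}\}_\lambda$ behave well under the group operations up to null sets, and that the formal sum defining $\varphi(u_\gamma)$ converges appropriately in the SOT and lands in the crossed product. Once the cocycle $c$ is in hand and its identity established, the algebraic verification that $\varphi$ respects the crossed-product relations is essentially a bookkeeping exercise with the partition-of-unity decompositions, and the Radon--Nikodym adjustment needed to pass from an arbitrary Borel isomorphism to a measure-preserving one in the converse direction is routine.
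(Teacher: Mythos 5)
The paper itself gives no proof of this theorem (it is quoted from Singer's paper), so there is no ``paper's route'' to compare against; judged on its own terms, your proposal follows the standard Singer/Feldman--Moore argument and the overall strategy is correct, but the key formula in the substantive direction is wrong as written. With the paper's conventions ($u_\gamma f u_\gamma^*=\gamma\cdot f$, Fourier coefficients to the left of the group unitaries), $\varphi(u_\gamma)$ must be the full-group unitary of the transformation $T_\gamma:=\theta\circ\gamma\circ\theta^{-1}$ of $Y$, and its $\lambda$-th Fourier coefficient is the indicator of the \emph{image} set $\theta(\gamma A_{\gamma,\lambda})=\lambda\,\theta(A_{\gamma,\lambda})$, not of the domain set $\theta(A_{\gamma,\lambda})$. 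Your element $\sum_\lambda (\mathbf{1}_{A_{\gamma,\lambda}}\circ\theta^{-1})v_\lambda$ is in general not a unitary: computing $\varphi(u_\gamma)\varphi(u_\gamma)^*$ one finds off-diagonal terms $\mathbf{1}_{\theta(A_{\gamma,\lambda})\cap\lambda\lambda'^{-1}\theta(A_{\gamma,\lambda'})}\,v_{\lambda\lambda'^{-1}}$ which do not vanish (the sets $\theta(A_{\gamma,\lambda})$ are disjoint, but their translates need not be disjoint from them), and the covariance relation $\varphi(u_\gamma)\varphi(f)\varphi(u_\gamma)^*=\varphi(\gamma\cdot f)$ fails. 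The fix is to replace $\mathbf{1}_{A_{\gamma,\lambda}}$ by $\mathbf{1}_{\gamma A_{\gamma,\lambda}}$ (equivalently, to place the indicator of $\theta(A_{\gamma,\lambda})$ to the \emph{right} of $v_\lambda$): since the $\gamma A_{\gamma,\lambda}$ again partition $X$, both $\varphi(u_\gamma)^*\varphi(u_\gamma)$ and $\varphi(u_\gamma)\varphi(u_\gamma)^*$ collapse to $1$, the off-diagonal terms die, and the verification of multiplicativity via the cocycle identity then goes through exactly as you describe.

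Two further points. In the converse direction, the assertion that the normalizers ``implement exactly the orbit equivalence relations'' is the entire content of that implication and should not be taken for granted: for $u\in\N_M(L^\infty(X))$ with induced point transformation $T_u$, one expands $u=\sum_\gamma a_\gamma u_\gamma$ and uses essential freeness to decompose $X$ (up to null sets) into pieces on which $T_u$ coincides with some $\gamma\in\Gamma$, whence $T_u(x)\in\Gamma x$ a.e.; this Fourier-plus-freeness argument is short but indispensable. Finally, no Radon--Nikodym adjustment is needed: both crossed products are $\twoone$-factors, hence carry a unique trace, so $\varphi$ is automatically trace preserving and the induced map $\theta$ is automatically measure preserving.
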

Recall that the subalgebra $A:=L^\infty(X)\subset L^\infty(X)\rtimes\Gamma=:M$ is an example of a  \emph{Cartan subalgebra}; i.e.~it is a maximal abelian subalgebra whose normalizer 
\[
\N_M(A):=\{u\in \U(M)\mid uAu^*=A\}
\]
generates $M$ as a von Neumann algebra. The main problem in the field of deformation/rigidity and $W^*$-super rigidity is to analyze under which circumstances (one or both of) the implications 
\vspace{0.2cm}
\begin{center}
Conjugacy $\Rightarrow$ Orbit equivalence $\Rightarrow$ $W^*$-equivalence
\end{center} 
\vspace{0.2cm}
can be reversed, and the action $\GammaÊ\curvearrowright (X,\mu)$ itself is said to be \emph{$W^*$-super rigid} if any other essentially free, ergodic p.m.p.~action of any other group which is $W^*$-equivalent to it is actually conjugate to it. This, of course, is a rather rare phenomenon, which is illustrated well by Connes' results \cite{con76} showing that any two essentially free, ergodic p.m.p.~actions of any two amenable groups are $W^*$-equivalent.  Thus, actions of amenable groups are always highly non-$W^*$-super rigid.
However, recent advances within the field of deformation/rigidity have provided a number of interesting examples of $W^*$-super rigid actions. For example, it is proved by Popa and Vaes \cite{popa-vaes-grp-measure-space-decomp} that the Bernoulli action of a certain class of amalgamated free product groups is $W^*$-super rigid and, even stronger, Houdayer, Popa and Vaes prove in \cite{houdayer-popa-vaes} that \emph{any} essentially free, ergodic p.m.p.~action of $SL(3,\ZZ)\ast_T SL(3,\ZZ)$ (the amalgamation being over the subgroup of matrices $(x_{ij})$ with $x_{31}=x_{32}=0$) is  $W^*$-super rigid. Another very striking result in this direction is due to Ioana \cite{ioana-T-bernoulli}, who proved  that the Bernoulli action of an arbitrary property $\T$ group is $W^*$-super rigid. \\

Let us now return to our generic essentially free, ergodic p.m.p~action $\Gamma\curvearrowright (X,\mu)$. The general strategy for proving $W^*$-super rigidity for such an action is the following:
\begin{itemize}
\item Firstly, prove that the crossed product von Neumann algebra $M:=L^\infty(X)\rtimes \Gamma$ has only one \emph{group-measure space Cartan}  up to unitary conjugacy; i.e.~that for any other group-measure space decomposition $M=L^\infty(Y)\rtimes \Lambda$ there exists a unitary $u\in M$ such that $uL^\infty(X)u^*=L^\infty(Y)$. 
\item Then apply Singer's result (see above) to pass from $W^*$-equivalence with another action to orbit equivalence with it.
\item Lastly, appeal to results from the theory of orbit equivalence super rigidity to pass from orbit equivalence to actual conjugacy.

\end{itemize}
In the sequel we will only be concerned with the first part of this strategy and the main goal is to prove the following.

\begin{thm}[Chifan-Peterson, \cite{chifan-peterson}]\label{chifan-peterson-thm}
Assume that $\Gamma$ is a discrete countable group acting ergodically, essentially freely and measure preserving on a standard probability space $(X,\mu)$.  Assume furthermore that $\Lambda$ is another group acting (essentially  freely, ergodically and p.m.p.) on another probability space $(Y,\nu)$  such that $M=L^\infty(Y)\rtimes \Lambda$. If
\begin{itemize}
\item there exists an infinite subgroup $\Gamma_0< \Gamma$ with property $\T$ and
\item a mixing representation $\pi\colon \Gamma\to O(H)$ on a real Hilbert space $H$ with an unbounded 1-cocycle $b\colon \Gamma \to H$,
\end{itemize}
then there exists a unitary $u\in M$ such that $uL^{\infty}(X)u^*=L^\infty(Y)$. 
\end{thm}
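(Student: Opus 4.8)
The plan is to follow the deformation/rigidity strategy of Popa, as streamlined by Vaes. The unbounded $1$-cocycle $b\colon\Gamma\to H$ produces, via the Gaussian construction, a one-parameter family of trace-preserving automorphisms (a ``malleable deformation'') $(\alpha_t)_{t\in\RR}$ of an enlarged von Neumann algebra $\tilde M=L^\infty(X)\rtimes_\sigma(\Gamma\times_{?}\,)\supseteq M$ — concretely, one forms the Gaussian action associated to $\pi$, builds the crossed product $\tilde M = L^\infty(X\times\text{Gaussian})\rtimes\Gamma$, and uses the flow that rotates the Gaussian variables by $b$. This deformation has the crucial property that it converges to the identity uniformly on $L^\infty(X)$-bimodular balls, while the mixing of $\pi$ forces any subalgebra on which $\alpha_t$ does \emph{not} converge uniformly to ``interact'' in a controlled way with $L^\infty(X)\rtimes\Gamma$. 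The first step is to set up this deformation carefully and record its continuity and (most importantly) its transversality/malleability properties.

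Next I would bring in the rigidity side. Since $\Gamma_0<\Gamma$ is an infinite property $(\T)$ subgroup, the inclusion $L\Gamma_0\subseteq M$ (or rather $L\Gamma_0\subseteq\tilde M$) is rigid in Popa's sense: the deformation $\alpha_t$ must converge \emph{uniformly} on the unit ball of $L\Gamma_0$ as $t\to 0$. By a standard ``spectral gap''/conditional expectation argument this uniform convergence can be transported from $L\Gamma_0$ to its relative commutant and then, using that $\Gamma_0$ is infinite together with mixingness of $\pi$, one deduces uniform convergence of $\alpha_t$ on the unit ball of the whole of $M = L^\infty(Y)\rtimes\Lambda$; equivalently, $\alpha_t\to\id$ uniformly on $(M)_1$. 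Now apply the same reasoning on the $\Lambda$-side: writing $M=L^\infty(Y)\rtimes\Lambda$, uniform convergence of $\alpha_t$ on $(M)_1$ forces, through a Popa-type ``either embed or converge'' dichotomy for the crossed-product building blocks, that $L^\infty(Y)$ embeds into $L^\infty(X)$ inside $M$ in the sense of Popa's intertwining-by-bimodules, i.e.~$L^\infty(Y)\prec_M L^\infty(X)$.

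Finally, one must upgrade the intertwining statement $L^\infty(Y)\prec_M L^\infty(X)$ to the full conclusion that some unitary $u\in M$ conjugates $L^\infty(X)$ exactly onto $L^\infty(Y)$. Here the key structural input is that both $L^\infty(X)$ and $L^\infty(Y)$ are \emph{Cartan} subalgebras (maximal abelian, regular); a theorem of Popa (the ``conjugacy of Cartans from intertwining'' lemma, which one proves from a maximality-of-finite-projections argument together with regularity) says that two Cartan subalgebras of a $\I\I_1$ factor which intertwine in this weak sense are in fact unitarily conjugate. Assembling these pieces gives the unitary $u$ with $uL^\infty(X)u^*=L^\infty(Y)$.

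The main obstacle, I expect, is the middle step: propagating uniform convergence of $\alpha_t$ from the property $(\T)$ piece $L\Gamma_0$ to \emph{all} of $M$, and then extracting the intertwining of $L^\infty(Y)$ into $L^\infty(X)$. This is where the analytic heart of the argument lies — one needs the mixing hypothesis on $\pi$ precisely to kill the ``transverse'' contributions of group elements outside suitable finite sets, and one needs a careful Popa-style dichotomy (a.k.a. a ``transversality + mixing $\Rightarrow$ intertwining'' lemma) to turn a non-intertwining assumption into a contradiction with uniform convergence. Getting the quantitative estimates (controlling $\|\alpha_t(x)-x\|_2$ uniformly over operator-norm-bounded $x$, uniformly in the relevant Fourier supports) right, and correctly handling the enlargement $\tilde M\supseteq M$ so that the deformation genuinely sees the $\Lambda$-crossed-product structure, is the delicate part; the remaining steps (rigidity of property $(\T)$ inclusions, and conjugacy of intertwining Cartans) are comparatively standard once the framework is in place.
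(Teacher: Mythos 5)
The first and last steps of your outline do match the paper: the Gaussian construction turns the cocycle into a malleable deformation $\alpha_t$ of $\tilde M=(D\htens L^\infty(X))\rtimes\Gamma$, and the final upgrade from intertwining of Cartan subalgebras to unitary conjugacy is exactly Popa's Corollary \ref{cartan-virtual}. The middle step, however, rests on a claim that is false and cannot be repaired as stated: $\alpha_t$ does \emph{not} converge uniformly on the unit ball of $M$. One computes $\|\alpha_t(u_g)-u_g\|_2^2=2-2e^{-t^2\|b(g)\|^2}$, so for any fixed $t>0$ the group unitaries $u_g$ with $\|b(g)\|$ large are moved a distance close to $\sqrt2$; the unboundedness of $b$, which is an essential hypothesis of the theorem, is precisely the obstruction to uniform convergence on $(M)_1$. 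Property $\T$ of $\Gamma_0$ only yields uniform convergence on the unit ball of $L\Gamma_0$, and there is no spectral-gap mechanism available to propagate it to all of $M$ ($\Gamma_0$ is just some infinite subgroup, with no assumption on its relative commutant or normalizer). So the implication ``uniform convergence on $(M)_1$ $\Rightarrow$ $L^\infty(Y)\prec_M L^\infty(X)$'' starts from a premise that never holds; indeed, the concluding contradiction in the paper's Theorem \ref{blabla} exploits exactly the failure of $\alpha_t$ to be close to the identity on $M$.

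What is missing is the device that lets the deformation, built entirely from the $\Gamma$-picture, see the unknown $\Lambda$-decomposition: the comultiplication $\Delta\colon M\to M\htens M$, $\Delta(bv_h)=bv_h\tens v_h$, defined via $M=L^\infty(Y)\rtimes\Lambda$. Applying rigidity of $\Delta(L\Gamma_0)$ to the deformation $\id\tens\alpha_t$ of $M\htens\tilde M$, combined with the fact that $\Delta(L\Gamma_0)$ cannot be intertwined into $M\htens L^\infty(X)$ (Lemma \ref{non-embedding-lem}), produces a sequence of group unitaries $v_{h_n}$, $h_n\in\Lambda$, which are almost fixed by $\alpha_t$ for some fixed $t>0$ and yet have Fourier coefficients in the $\Gamma$-picture tending to zero (Theorem \ref{transfer-thm}). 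From this transferred rigidity one shows that $\alpha_t$ is implemented by a partial isometry on $L^\infty(Y)$ alone --- not on all of $M$ --- and only then derives the conjugacy (Theorems \ref{implementation-thm} and \ref{blabla}). Your proposal correctly flags the hand-off between the two crossed-product structures as the delicate point, but it does not supply this mechanism, and the route it proposes in its place is blocked.
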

The reader not familiar with Kazhdan's notion of property $\T$ is referred to the monograph \cite{BHV} for a detailed treatment of the subject. Recall that a representation $\pi\colon \Gamma \to  O(H)$ is called \emph{mixing} if $\lim_{g\to \infty}\ip{\pi(g)\xi}{\eta}=0$ for all $\xi,\eta \in H$.\footnote{The limit $\lim_{g\to \infty}\ip{\pi(g)\xi}{\eta}=0$  means that $\forall \varps>0 \ \exists F\subset \Gamma$ finite $\forall g\in \Gamma\setminus F$: $|\ip{\pi(g)\xi}{\eta}|<\varps$.} We also recall that a map $b\colon \Gamma \to H$  is called a \emph{1-cocycle} if it satisfies $b(gh)=\pi(g)b(h) +b(g)$ for all $g,h\in \Gamma$. A somewhat trivial source of cocycles is the class of so-called
 inner cocycles; these are the ones  given by $b(g)=\pi(g)\xi-\xi$ for a vector $\xi \in H$. Note that an unbounded cocycle is the same as a non-inner cocycle \cite[Proposition 2.2.9]{BHV}  and hence the Delorme-Guichardet theorem \cite[Theorem 2.12.4]{BHV}) implies that the property $\T$ subgroup  $\Gamma_0$ can never possess such a cocycle. It is therefore imperative that the subgroup $\Gamma_0$ is a proper subgroup.
%Put even more heuristically: the assumption that $\Gamma_0$ is infinite and has property $\T$ provides a substantial amount of %rigidity whereas the assumption that $\Gamma$ allows for an unbounded cocycle can be thought of as ensuring that there is some %still ''softness'' present outside of $\Gamma_0$.\\

\begin{ex}
Note that any free product $\Gamma=\Gamma_1\ast \Gamma_2$ with $\Gamma_1$ an infinite property $\T$ group and $\Gamma_2$ non-trivial satisfies the assumptions in TheoremÊ \ref{chifan-peterson-thm}. The existence of an infinite property $\T$ subgroup is clear and one can define an unbounded cocycle $b\colon \Gamma \to \ell^2(\Gamma,\RR)$ by setting
\[
b(g) =
\left\{
	\begin{array}{ll}
		\delta_g-\delta_e  & \mbox{if } g \in \Gamma_1 \\
		0 & \mbox{if }g\in \Gamma_2
	\end{array}
\right.
\]
and extending by the cocycle relation. To see that $b$ is unbounded, take two  nontrivial elements $g_1\in \Gamma_1$ and $g_2\in \Gamma_2$ and prove by induction that $\|b((g_1g_2)^n)\|_2=\sqrt{2n}$. This idea can be extended to cover certain amalgamated free products as well; see \cite{vaes-cohomology-and-unique-cartan} for the state of the art in this direction.
\end{ex}
Another important result in the same direction is the following theorem due to Popa and Vaes which is the precursor of the Chifan-Peterson theorem stated above. Before stating the result we  define the class of groups it deals with: Denote by $\G$  the class of discrete countable groups $\Gamma$ which can be written as an amalgamated free product $\Gamma_1 \ast_\Sigma \Gamma_2$ such that the following holds
\begin{itemize}
\item[(a)] $\Gamma_1$ contains a non-amenable subgroup $\Lambda$ with the relative property $\T$ or two non-amenable commuting subgroups.
\item[(b)] $\Sigma$ is amenable and  $\Gamma_2$ is strictly bigger than $\Sigma$.
\item[(c)] There exist $g_1,\dots, g_n \in \Gamma$ such that $\cap_{i=1}^n g_i\Sigma g_i^{-1}$ is finite.
\end{itemize}
The statement now is as follows.
\begin{thm}[Popa-Vaes \cite{popa-vaes-grp-measure-space-decomp}]\label{popa-vaes-thm}
Let $\Gamma$ be a group in $\G$ and assume that $\Gamma$ acts essentially freely, ergodically and measure preserving on a standard probability space $(X,\mu)$. Define $M:=L^\infty(X)\rtimes \Gamma$. If $\Lambda$ is any other group acting essentially freely, ergodically and measure preserving on another standard probability space $(Y,\nu)$ such that $M=L^\infty(Y)\rtimes \Lambda$ then there exists a unitary $u\in M$ such that $L^\infty(Y)=uL^\infty(X)u^*$.
\end{thm}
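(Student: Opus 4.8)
I would approach this by a deformation/rigidity argument in the style of Popa and Ioana-Peterson-Popa, using the amalgam structure of $\Gamma$ to manufacture a malleable deformation of $M$ and condition (a) to provide rigidity. Write $A := L^\infty(X)$ and $B := L^\infty(Y)$, so $M = A \rtimes \Gamma = B \rtimes \Lambda$; fix the canonical unitaries $(v_s)_{s\in\Lambda}$ implementing the $\Lambda$-action, so that $\N_M(B)'' = M$. First I would reduce the problem to proving the unilateral intertwining $B \prec_M A$: by the standard fact that two Cartan subalgebras $B_1 \prec_M B_2$ are automatically unitarily conjugate, such an intertwining yields a unitary $u \in M$ with $uBu^* = A$, i.e. $u^*L^\infty(X)u = L^\infty(Y)$, as wanted.

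\emph{The deformation.} Set $M_i := A \rtimes \Gamma_i$ and $N := A \rtimes \Sigma$, so $M = M_1 \ast_N M_2$, and equip $M$ with the Ioana-Peterson-Popa malleable deformation: enlarge $M$ to $\tilde M := M_1 \ast_N (N \bar\otimes L\ZZ) \ast_N M_2 \supset M$ and take the one-parameter automorphism group $(\theta_t)_{t\in\RR}$ of $\tilde M$ built from a path of unitaries involving a generator of $L\ZZ$. I would record the features to be used: $(\theta_t)$ fixes $N$ pointwise and has Popa's transversality property; by condition (c), $\Sigma$ is weakly malnormal in $\Gamma$, so the deformation is mixing relative to $N$; and since $\Gamma_1$ and $\Gamma_2$ both properly contain $\Sigma$ (by (a) and (b)) the amalgam is non-degenerate, so $M \not\prec_M M_i$ for $i=1,2$ and $\theta_t$ does not converge to the identity uniformly in $\|\cdot\|_2$ on the unit ball of $M$. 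I would also invoke the Ioana-Peterson-Popa dichotomy attached to this deformation: for a subalgebra $P \subseteq M$, uniform convergence of $\theta_t \to \id$ on the unit ball of $P$ already forces $P \prec_M M_1$ or $P \prec_M M_2$.

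\emph{Rigidity and the core step.} Condition (a) gives a non-amenable subalgebra $Q_0 \subseteq M_1$ that is rigid inside $M$: either $Q_0 = L\Sigma_0$ for a non-amenable subgroup $\Sigma_0 < \Gamma_1$ with the relative property $\T$ (whence the inclusion $Q_0 \subseteq M$ has the relative property $\T$ of Popa), or, in the case of two commuting non-amenable subgroups of $\Gamma_1$, $Q_0$ is produced via Popa's spectral gap argument. Either way $\theta_t \to \id$ uniformly on the unit ball of $Q_0$. The heart of the argument is to upgrade this to $B \prec_M M_1$ or $B \prec_M M_2$. Suppose neither holds; then in particular $B \not\prec_M N$, which, since $(\theta_t)$ is mixing relative to $N$, says that the deformation displaces $B$ in a weakly mixing way. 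Feeding this together with the rigidity of $Q_0 \subseteq M = \N_M(B)''$ into Popa's spectral gap argument, I would transport $\theta_t$-rigidity from $Q_0$ along the normalizing unitaries of $B$ and conclude that $\theta_t \to \id$ uniformly on the unit ball of all of $\N_M(B)'' = M$, which is impossible. Hence $B \prec_M M_i$ for some $i$; relabel so that $i = 1$.

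\emph{Descent and conclusion.} It remains to descend from $B \prec_M M_1$ to $B \prec_M A$. Since $\Sigma$ is amenable, none of the ``core'' algebras met along the way (beginning with $N$) can absorb the Cartan subalgebra $B$; combining this with condition (c) I would run the induction over the Bass-Serre tree of the amalgam --- as in Ioana-Peterson-Popa, in the refined form due to Vaes --- iterating the argument of the previous paragraph whenever $\Gamma_1$ decomposes further, with the recursion terminating at $A$ by amenability of $\Sigma$. This gives $B \prec_M A$, and the Cartan-conjugacy fact quoted at the start then produces the required unitary. The hard part is the spectral-gap/propagation step in the core argument: getting Popa's deformation/rigidity machinery to transport the \emph{localized} rigidity of $Q_0 \subseteq M_1$ across the \emph{whole} normalizer $\N_M(B)'' = M$, which requires interlocking the transversality estimate, the mixing relative to $N$, and the Cartan structure of $B$ with care. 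Secondary difficulties are the tree induction in the descent --- made more delicate by the fact that the vertex groups $\Gamma_i$ need not act ergodically, so the $M_i$ need not be factors --- and the case in (a) of two commuting non-amenable subgroups, which requires the spectral-gap substitute for genuine relative property $\T$.
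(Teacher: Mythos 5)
These notes do not actually prove Theorem \ref{popa-vaes-thm}: it is quoted as the precursor of Theorem \ref{chifan-peterson-thm}, with the remark that it follows from the full-strength version of Vaes' cohomological approach in \cite{vaes-cohomology-and-unique-cartan}. You have instead chosen the original Popa--Vaes/Ioana--Peterson--Popa route built on the free malleable deformation of the amalgam, and your inventory of ingredients (reduction to $B\prec_M A$ via Corollary \ref{cartan-virtual}, the deformation of $\tilde M=M_1\ast_N(N\bar{\otimes}L\ZZ)\ast_N M_2$, the intertwining dichotomy, the descent from $M_i$ to $A$ using (b) and (c)) does match the shape of the proof in \cite{popa-vaes-grp-measure-space-decomp}. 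That choice of route is legitimate.

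However, the core step as you state it contains a genuine gap, and it is exactly the step you flag as ``the hard part''. You propose to ``transport $\theta_t$-rigidity from $Q_0$ along the normalizing unitaries of $B$'' and conclude uniform convergence on $\N_M(B)''=M$. There is no such transport: $Q_0=L\Sigma_0$ lives in the $\Gamma$-picture and has no controlled position relative to $B$; spectral-gap and relative property (T) arguments propagate rigidity from $Q_0$ to algebras tied to $Q_0$ (its relative commutant or its normalizer in $\tilde M$), not to the normalizer of an unrelated Cartan subalgebra that merely happens to generate the same $M$. Note moreover that your argument, as written, uses only that $\N_M(B)''=M$ and never the group structure of $\Lambda$; if it worked it would prove uniqueness of \emph{arbitrary} Cartan subalgebras, which is strictly stronger than the theorem and is not what Popa and Vaes establish. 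The missing device is the comultiplication $\Delta\colon M\to M\bar{\otimes}M$, $\Delta(bv_h)=bv_h\otimes v_h$, which is precisely what encodes the hypothesis that $B$ is a \emph{group-measure space} Cartan: one applies $\id\otimes\theta_t$ to $\Delta(M)\subseteq M\bar{\otimes}\tilde M$, uses the rigidity of $\Delta(Q_0)$ together with $\Delta(Q_0)\nprec_{M\bar{\otimes}M}M\bar{\otimes}A$ to extract a sequence $h_n\in\Lambda$ whose canonical unitaries $v_{h_n}$ are almost $\theta_t$-invariant yet have Fourier coefficients vanishing in the $A\rtimes\Gamma$-picture, and only then runs the implementation and intertwining arguments on $B$. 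This is the content of Section \ref{transfer-section} (Theorem \ref{transfer-thm}) in the Gaussian setting of these notes, and the analogous transfer is the heart of \cite{popa-vaes-grp-measure-space-decomp}; without it your outline does not close.
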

As mentioned already, the main goal is to prove Theorem \ref{chifan-peterson-thm}. The method of proof differs from the original paper \cite{chifan-peterson};  we are going to follow the course of the proof given in \cite{vaes-cohomology-and-unique-cartan}, although we are able circumvent some technicalities since we restrict attention to a smaller class of groups.  Note also that  the ``fully-fledged''  version of the result presented in  \cite{vaes-cohomology-and-unique-cartan} can  be used to derive the original result of Popa and Vaes stated above and therefore covers all known results regarding uniqueness of group-measure space Cartan algebras.

\section{Tools and strategy of proof}
In this section we gather the tools we need to import from the general theory of deformation/rigidity. The main results are stated without proofs, but all missing proofs can be found, for instance,  in \cite[Appendix F]{brown-ozawa}.  Actually,  for the reader not familiar with  the theory of bimodules (correspondences) over von Neumann algebras the introduction given in \cite[Appendix F]{brown-ozawa} is highly recommendable; basically all the  results needed are explained and proved in about 10 pages. Our main tool is Popa's \emph{intertwining by bi-modules technique} which is summarized in the following theorem.

\begin{thm}[Popa, \cite{popa-malleable-actions-I}]\label{fundamental-thm}
For a finite von Neumann algebra $(M,\tau)$ with von Neumann subalgebras $A,B\subseteq M$ the following are equivalent
\begin{itemize}
\item[(i)] There exists a Hilbert $A-B$-bimodule $H\subseteq L^2(M)$ such that $\dim_B H< \infty$.
\item[(ii)] There exist non-zero projections $p\in A$ and $q\in B$, a normal, unital $*$-homo\-morphism $\theta\colon pAp \to qBq $  and a non-zero partial isometry $v\in M$ such that $v^*v\in \theta(pAp)' \cap qMq$, $vv^*\in (pAp)'\cap pMp$ and
\[
xv=v\theta(x) \ \text{ for all }\  x\in pAp.
\]
\item[(iii)] There is no sequence  $v_n\in \U(A) $ such that $\lim _n\|E_B(xv_n y)\|_2=0 $ for all $x,y\in M$.

\end{itemize}
\end{thm}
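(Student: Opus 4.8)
\noindent\emph{Sketch of a proof strategy.}
The tool doing all the work is the \emph{basic construction} for $B\subseteq M$. Write $e_B\in B(L^2(M))$ for the orthogonal projection onto $L^2(B)\subseteq L^2(M)$ (so $e_B$ implements $E_B$), and $\langle M,e_B\rangle:=(JBJ)'\subseteq B(L^2(M))$ for the von Neumann algebra it generates together with $M$. I will freely use the standard facts that $e_Bme_B=E_B(m)e_B$ for $m\in M$; that $\langle M,e_B\rangle$ carries a normal semifinite faithful trace $\Tr$ with $\Tr(ae_Bb)=\tau(ab)$ for $a,b\in M$; that $\spann\{ae_Bb:a,b\in M\}$ is dense in $L^2(\langle M,e_B\rangle,\Tr)$; and that for any $JBJ$-invariant closed subspace $K\subseteq L^2(M)$ the associated projection $P_K$ lies in $\langle M,e_B\rangle$ with $\dim_BK=\Tr(P_K)$. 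The first step is a reformulation of (i): since a closed subspace $H\subseteq L^2(M)$ is an $A$-$B$-subbimodule exactly when it is invariant under the standard left $A$-action and under $JBJ$, and since $P_H$ then commutes with both, one gets
\[
\text{(i)}\iff \exists \text{ a nonzero projection } e\in A'\cap\langle M,e_B\rangle\text{ with }\Tr(e)<\infty
\]
(take $e=P_H$; conversely $H=eL^2(M)$).

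Next I would prove (i)$\Leftrightarrow$(iii). For (i)$\Rightarrow$(iii), argue contrapositively: if $u_n\in\U(A)$ satisfy $\|E_B(xu_ny)\|_2\to0$ for all $x,y\in M$ and $e$ is as above, then $u_neu_n^*=e$ gives $\Tr(e)=\Tr(e\cdot u_neu_n^*)$ for every $n$; approximating $e$ in $\|\cdot\|_{2,\Tr}$ by a finite sum $w=\sum_kx_ke_By_k$ and using Cauchy--Schwarz for $\Tr$ (all $\|\cdot\|_{2,\Tr}$-norms in play being bounded by $\sqrt{2\Tr(e)}$) reduces this to
\[
\Tr(w\cdot u_nwu_n^*)=\sum_{k,l}\tau\!\big(E_B(y_ku_nx_l)E_B(y_lu_n^*x_k)\big)\to 0,
\]
which holds by Cauchy--Schwarz in $\|\cdot\|_2$ and the hypothesis, so $\Tr(e)=0$ -- a contradiction. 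For (iii)$\Rightarrow$(i) I would run Popa's averaging argument in $\langle M,e_B\rangle$: the set $\mathcal C:=\overline{\conv}^{\,\|\cdot\|_{2,\Tr}}\{ue_Bu^*:u\in\U(A)\}$ is a bounded, closed, convex subset of the Hilbert space $L^2(\langle M,e_B\rangle,\Tr)$ on which $\U(A)$ acts isometrically by conjugation, so its unique element $a_0$ of minimal $\|\cdot\|_{2,\Tr}$-norm is $\U(A)$-fixed, whence $0\le a_0\in A'\cap\langle M,e_B\rangle$ and $\Tr(a_0)\le1$. The only non-formal point is the standard equivalence ``(iii) $\iff a_0\ne0$'' (i.e.\ (iii) $\iff 0\notin\mathcal C$; see \cite[Appendix F]{brown-ozawa}, where it is extracted by a maximality/diagonal argument); granting it, a spectral projection $\mathbf 1_{[\delta,\infty)}(a_0)$ with small $\delta>0$ is a nonzero finite-trace projection in $A'\cap\langle M,e_B\rangle$, giving (i).

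Finally I would identify (i) with (ii). For (ii)$\Rightarrow$(i): from $(p,q,\theta,v)$ as in (ii) one gets $pv=v=vq$, so $e_0:=ve_Bv^*$ is a projection in $\langle M,e_B\rangle$ with range $\overline{vB}$ and $\Tr(e_0)=\tau(v^*v)<\infty$, satisfying $pe_0=e_0p=e_0$ and -- using $xv=v\theta(x)$ together with $\theta(x)\in B$ -- $xe_0=e_0x$ for all $x\in pAp$; hence $\overline{vB}$ is a $pAp$-$B$-subbimodule of finite $B$-dimension, which one promotes to an honest $A$-$B$-subbimodule $H$ with $\dim_BH<\infty$ by inducing along partial isometries in $A$ that relate $p$ to its central support $z(p)$ (when $A$ is abelian, as in the setting of Theorem~\ref{chifan-peterson-thm}, one has $z(p)=p$ and $\overline{vB}$ is already $A$-invariant, so this is immediate). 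For (i)$\Rightarrow$(ii): given $H$ as in (i), the structure theory of finite right Hilbert $B$-modules yields a finite right-$B$-bounded generating family $\eta_1,\dots,\eta_n$ for $H$, through which the left $A$-action is encoded by a normal $*$-homomorphism $\psi\colon A\to M_n(B)$ with $\psi(1)$ a projection; conjugating $\psi$ by a partial isometry in $M_n(B)$ and compressing by a suitable projection $p\in A$ then produces the single partial isometry $v\in M$ and the unital $*$-homomorphism $\theta\colon pAp\to qBq$ (with $q:=\theta(p)$) of (ii), the relations $vv^*\in(pAp)'\cap pMp$ and $v^*v\in\theta(pAp)'\cap qMq$ being built into the construction.

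I expect (i)$\Rightarrow$(ii) to be the main obstacle: converting an abstract bimodule of finite $B$-dimension into the explicit intertwining data $(p,q,\theta,v)$ relies on the structure theory of finite Hilbert modules (existence of a finite Pimsner--Popa basis) and on a somewhat delicate descent from a homomorphism into a matrix amplification $M_n(B)$ down to a single partial isometry over $M$, where one must track how the projection $\psi(1)$ interacts with the compressing projection. The only other ingredient that is not pure bookkeeping with $\Tr$ is the equivalence of (iii) with ``$0\notin\mathcal C$'' invoked in (iii)$\Rightarrow$(i).
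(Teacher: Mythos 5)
The paper does not prove this theorem: it is imported as a black box, with the text explicitly deferring all proofs of the results in Section~2 to \cite[Appendix~F]{brown-ozawa}. Your sketch is precisely the standard basic-construction proof found there (and in Popa's original papers), so there is no divergence of method to report; the two acknowledged non-formal points --- the extraction of a single sequence of unitaries from ``$0\in\mathcal C$'', and the passage from a finite right $B$-module with a left $A$-action to the explicit data $(p,q,\theta,v)$ --- are exactly where the cited reference does its work. The $\Tr$-computations you display for (i)$\Rightarrow$(iii) and the averaging argument for (iii)$\Rightarrow$(i) are correct as written.

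One step of your plan would need repair in general: in (ii)$\Rightarrow$(i) you propose to promote the $pAp$--$B$ bimodule $\overline{vB}$ to an $A$--$B$ bimodule by summing translates $\overline{w_ivB}$ over partial isometries $w_i\in A$ with $w_i^*w_i\leq p$ and $\sum_iw_iw_i^*=z(p)$. When infinitely many $w_i$ are required (possible for $A$ with infinite-dimensional centre, e.g.\ $A=\bigoplus_n M_n(\CC)$ with $p$ of rank one in each block), the resulting bimodule $\overline{\spann}(AvB)$ can have $\dim_B=\sum_i\tau(vv^*w_i^*w_i)=\infty$, so finiteness of the $B$-dimension is not automatically preserved. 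The standard proofs avoid this by closing the cycle as (ii)$\Rightarrow$(iii)$\Rightarrow$(i) (or by phrasing the bimodule condition for a corner $pAp$), rather than constructing the $A$--$B$ bimodule directly from $v$. You correctly note that in the abelian/Cartan setting actually used in the paper one has $z(p)=p$ and $\overline{vB}$ is already $A$-invariant, so for the application at hand your route is fine.
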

\begin{rem}
If one (and hence all) of the conditions in the above theorem is fulfilled we say that \emph{(a corner of) A embeds into (a corner of) B inside M} or that \emph{A can be virtually conjugated into B inside M};  in symbols $A\prec_M B$. We will refer to condition (iii) as \emph{Popa's criterion} in what follows. Since the homomorphism $\theta$ in part (ii) is normal its kernel is a weakly closed 2-sided ideal in $pAp$. Hence, by cutting $p$ with a central projection in $pAp$ (the complement of the one defining the kernel of $\theta$) we can always arrange for $\theta$ to be injective. In most applications we will therefore implicitly assume that  $\theta$ is an honest embedding.

\end{rem}

Since we are primarily interested in the case where $A$ and $B$ are Cartan subalgebras we now analyze this situation in more detail.
Firstly, in the case of Cartan subalgebras virtual conjugacy implies actual conjugacy as the following result shows.
\begin{cor}[Popa, \cite{popa-malleable-actions-I}]\label{cartan-virtual}
If $A,B\subseteq M$ are two Cartan subalgebras and $A\prec_M B$ then there exists $u\in \U(M)$ such that $B=uAu^*$.
\end{cor}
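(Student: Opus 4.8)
The plan is to feed Popa's criterion, in the form of the structural description (ii) in Theorem \ref{fundamental-thm}, into the Cartan setting, massage the resulting partial isometry until it implements an honest conjugacy between corners of $A$ and $B$, and then glue together normalizer-translates of it into a global unitary by a maximality argument.

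First I would apply Theorem \ref{fundamental-thm}(ii) to get projections $p\in A$, $q\in B$, an injective unital normal $*$-homomorphism $\theta\colon pAp\to qBq$, and a non-zero partial isometry $v$ with $av=v\theta(a)$ for $a\in pAp$, $vv^*\in(pAp)'\cap pMp$ and $v^*v\in\theta(pAp)'\cap qMq$. Since $A$ is maximal abelian, $(pAp)'\cap pMp=pAp$, so $vv^*\in A$ already. To move the right support into $B$ as well, I would work inside the finite von Neumann algebra $Q:=\theta(pAp)'\cap qMq$, which contains $qBq$ as a maximal abelian subalgebra; since in a finite von Neumann algebra every projection is equivalent to one lying in a prescribed maximal abelian subalgebra, there is a partial isometry $w\in Q$ with $ww^*=v^*v$ and $w^*w\in qBq$. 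As $w$ commutes with $\theta(pAp)$, the element $v':=vw$ still satisfies $av'=v'\theta(a)$ for $a\in pAp$, while now $e:=v'v'^*\in A$ and $f:=v'^*v'\in B$. I would then show that $v'$ conjugates $eAe$ \emph{onto} $fBf$: setting $\varphi(a):=v'^*av'$ for $a\in eAe$ gives an injective unital normal $*$-homomorphism $eAe\to fMf$ satisfying $av'=v'\varphi(a)$ and $v'^*a=\varphi(a)v'^*$, and a short computation with these relations gives $\varphi(eAe)\subseteq fBf$ and shows that any $x\in\varphi(eAe)'\cap fMf$ has $v'xv'^*$ commuting with $eAe$, hence $v'xv'^*\in eAe$ by maximal abelianness of $A$, so $x=\varphi(v'xv'^*)\in\varphi(eAe)$; thus $\varphi(eAe)$ is maximal abelian in $fMf$, and being contained in the maximal abelian algebra $fBf$ it must equal it. So $v'$ implements an isomorphism $eAe\cong fBf$, and symmetrically $v'(fBf)v'^*=eAe$.

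Finally I would call a partial isometry $v_0$ \emph{good} if $v_0v_0^*\in A$, $v_0^*v_0\in B$ and $v_0^*(v_0v_0^*\,A\,v_0v_0^*)v_0=(v_0^*v_0)\,B\,(v_0^*v_0)$. By the above, good partial isometries exist, and the class is stable under left multiplication by $\N_M(A)$, under right multiplication by $\N_M(B)$, and under cutting the left (resp.\ right) support down by a projection of $A$ (resp.\ $B$); each check uses that the relevant normalizer preserves $A$ or $B$ and that the $*$-isomorphism $v_0^*(\cdot)v_0$ restricts to reduced algebras. I would choose by Zorn's lemma a maximal family $\{v_i\}$ of good partial isometries with the left supports $v_iv_i^*$ mutually orthogonal in $A$ and the right supports $v_i^*v_i$ mutually orthogonal in $B$, and set $p^\ast:=\sum_iv_iv_i^*\in A$, $q^\ast:=\sum_iv_i^*v_i\in B$. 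Since $\tau(v_iv_i^*)=\tau(v_i^*v_i)$ we get $\tau(p^\ast)=\tau(q^\ast)$, so $p^\ast=1$ iff $q^\ast=1$. If $p^\ast\ne 1$, then $q^\ast\ne 1$; because $M$ is a factor and $A$ is Cartan, no non-trivial projection of $A$ is invariant under $\N_M(A)$, hence $\bigvee_{u\in\N_M(A)}u(v_1v_1^*)u^*=1$, and likewise for $B$. Pushing a good partial isometry successively through a conjugation by an element of $\N_M(A)$, a left-cut into $1-p^\ast$, a conjugation by an element of $\N_M(B)$, and a right-cut into $1-q^\ast$ produces a non-zero good partial isometry whose supports are orthogonal to $p^\ast$ and $q^\ast$, contradicting maximality. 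Hence $p^\ast=q^\ast=1$, the sum $u:=\sum_iv_i$ is a unitary, and — using that $B$ is abelian, so $v_i(v_i^*v_i)B(v_j^*v_j)v_j^*=0$ for $i\ne j$ — one computes $uBu^*=\sum_iv_i(v_i^*v_i\,B\,v_i^*v_i)v_i^*=\sum_iv_iv_i^*\,A\,v_iv_i^*=A$; replacing $u$ by $u^*$ gives $uAu^*=B$.

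The step I expect to be the real obstacle is the passage from the mere embedding $\theta$ to a surjective conjugacy, together with the propagation of this surjectivity through the gluing: the key insight is that the intertwiner's image is itself maximal abelian in the appropriate corner, which forces equality with the corner of $B$, and one must then verify that this onto-ness is preserved by multiplication by normalizers, by cutting with projections, and by the infinite sum defining $u$. The ancillary facts used — that a projection in a finite von Neumann algebra can be moved into a prescribed maximal abelian subalgebra, and that a Cartan subalgebra of a factor carries no non-trivial normalizer-invariant projection — are standard.
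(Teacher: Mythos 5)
Your argument is correct, and it is essentially the standard proof of this statement (Popa's Theorem A.1 in \cite{popa-malleable-actions-I}, also reproduced in \cite[Appendix F]{brown-ozawa}), which the paper deliberately cites without reproducing; all three stages --- moving the right support of the intertwiner into $B$ via the fact that $qBq$ is maximal abelian in $\theta(pAp)'\cap qMq$, upgrading the embedding to a surjection onto a corner of $B$ by the maximal-abelianness of the image, and the Zorn/normalizer-ergodicity patching --- are exactly the steps of that proof. The only point worth making explicit is that the final step uses that $M$ is a factor (so that $\N_M(A)$ acts ergodically on $A$); this hypothesis is implicit in the corollary as stated and is part of the paper's standing assumptions.
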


In our situation, $M$ is always arising from a crossed product construction and in this situation the condition (iii) in Theorem \ref{fundamental-thm} (or rather its negation) allows the following more intuitive interpretation.

 \begin{prop}\label{fourier-prop}
Assume that  $B$ is a von Neumann algebra and that $M=B\rtimes \Gamma$ for a  group $\Gamma$ and a trace preserving action $\sigma\colon \Gamma \to \Aut(B)$, and  denote by $(u_g)_{g \in \Gamma}$ the natural unitaries implementing the action. Let $v_n\in M$ be unitaries and expand them in $L^2(M)$ as $v_n=\sum_{g\in \Gamma} (v_n)_g u_g$. Then $\| E_B(xv_ny)\|_2\to 0$ for all $x,y\in M$ if and only if the Fourier coefficients $(v_n)_g$ go to $0$ in 2-norm for every $g \in \Gamma$. 
\end{prop}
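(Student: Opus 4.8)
The plan is to prove the two implications separately. The direction ``$\|E_B(xv_ny)\|_2\to0$ for all $x,y\in M$'' $\Rightarrow$ ``$(v_n)_g\to0$ in $\|\cdot\|_2$ for each $g$'' I expect to be immediate: first I would note that the conditional expectation recovers Fourier coefficients, $E_B(v_nu_g^*)=(v_n)_g$, which follows from $v_nu_g^*=\sum_{h\in\Gamma}(v_n)_hu_{hg^{-1}}$ since $E_B$ extracts the coefficient of $u_e$. Then specializing the hypothesis to $x=1$ and $y=u_g^*$ gives $\|(v_n)_g\|_2=\|E_B(v_nu_g^*)\|_2\to0$ for every $g\in\Gamma$.

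For the converse I would argue by density. First I would reduce to the case where $x$ and $y$ are ``finitely supported'', i.e.\ lie in the $*$-algebra $\mathcal A:=\spann\{bu_g\mid b\in B,\ g\in\Gamma\}$, which is strongly dense in $M$. The subtle point is that for the reduction to work I need the approximants to be bounded in operator norm, uniformly, which a crude truncation of the Fourier expansion does not supply; here I would invoke Kaplansky's density theorem to pick $x'=\sum_{s\in F}x_su_s$ and $y'=\sum_{t\in F}y_tu_t$ in $\mathcal A$ (with $F\subseteq\Gamma$ finite and $x_s,y_t\in B$) satisfying $\|x'\|_\infty\leq\|x\|_\infty$, $\|y'\|_\infty\leq\|y\|_\infty$, and $\|x-x'\|_2,\|y-y'\|_2$ arbitrarily small. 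Since each $v_n$ is a unitary, $\|v_n\|_\infty=1$, so a routine estimate
\[
\|E_B(xv_ny)-E_B(x'v_ny')\|_2\leq\|x-x'\|_2\,\|y\|_\infty+\|x'\|_\infty\,\|y-y'\|_2
\]
holds uniformly in $n$, reducing the claim to showing $\|E_B(x'v_ny')\|_2\to0$ for finitely supported $x',y'$.

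For such $x'$ and $y'$ I would compute directly, using the crossed-product relation $u_sb=\sigma_s(b)u_s$, that
\[
E_B(x'v_ny')=\sum_{s,t\in F}x_s\,\sigma_s\big((v_n)_{s^{-1}t^{-1}}\big)\,\sigma_{t^{-1}}(y_t),
\]
which is a finite sum; since $\sigma$ preserves the trace, the $(s,t)$-summand has $\|\cdot\|_2$-norm at most $\|x_s\|_\infty\,\|(v_n)_{s^{-1}t^{-1}}\|_2\,\|y_t\|_\infty$, and this tends to $0$ as $n\to\infty$ by hypothesis. Hence the finite sum tends to $0$, and combined with the uniform estimate above (letting the approximation error go to $0$) this yields $\|E_B(xv_ny)\|_2\to0$ for all $x,y\in M$.

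The main obstacle is the reduction to finitely supported elements: naively truncating the Fourier expansion is tempting but fails, since that truncation map on $M$ is not bounded in operator norm in general, which is exactly why Kaplansky's density theorem is needed to produce norm-controlled approximants. Everything after that is bookkeeping with the crossed-product relations.
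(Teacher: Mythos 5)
Your proof is correct and follows essentially the same route as the paper's: the easy direction via $E_B(v_nu_g^*)=(v_n)_g$, and the converse by computing $E_B(x'v_ny')=\sum_{s,t}x_s\,\sigma_s((v_n)_{s^{-1}t^{-1}})\,\sigma_{t^{-1}}(y_t)$ on finitely supported elements and then extending by Kaplansky's density theorem. If anything, you are slightly more explicit than the paper about why the approximation error is uniform in $n$ (the unitarity of $v_n$ together with the norm control from Kaplansky), which is exactly the point the paper leaves to the reader.
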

\begin{proof}
Since $(v_n)_g:=E_B(v_n u_g^*)$ one implication is obvious. To prove the opposite implication we first consider $x,y$ of the form $x=x_{g_0}u_{g_0}$ and $y=y_{h_0}u_{h_0}$  and compute:
\begin{align*}
\|E_B(x v_n y)\|_2 &=  \|E_B\sum_g x (v_n)_g u_g y \|_2\\
&= \|  E_B\sum_g x_{g_0}u_{g_0}(v_n)_gu_g y_{h_0}u_{h_0}  \|_2 \\
&=\| E_B\sum_gx_{g_0} \sigma_{g_0}((v_n)_g)u_{g_0g}y_{h_0}u_{h_0}    \|_2\\
&= \| E_B \sum_{g}x_{g_0}\sigma_{g_0}((v_n)_g)\sigma_{g_0g}(y_{h_0})u_{g_0gh_0}   \|_2\\
&=\|x_{g_0}\sigma_{g_0}((v_n)_{g_0^{-1}h_{0}^{-1}})\sigma_{h_0^{-1}}(y_{h_0})  \|_2\\
&\leq \|x_{g_0}\|_\infty \|y_{h_0}\|_\infty \|\sigma_{g_0}((v_n)_{g_0^{-1} h_0^{-1}})\|_2\\
&= \|x_{g_0}\|_\infty \|y_{h_0}\|_\infty \|(v_n)_{g_0^{-1} h_0^{-1}}\|_2,\\
\end{align*}
where the last equality follows since the action is trace preserving and each of the automorphisms $\sigma_g$ therefore extends to a unitary on $L^2(M)$. By assumption the last expression converges to zero and extending additively we get the desired convergence for $x,y\in B\rtimes_{\alg}\Gamma$. Using Kaplansky's density theorem one then extends the convergence to $x\in(B\rtimes \Gamma)_1$ and $y\in (B\rtimes_{\alg}Ê\Gamma)_1$ and, in turn,  to arbitrary  $x,y \in B\rtimes \Gamma$.
\end{proof}

%Lastly the following result will be useful.
%\begin{prop}
%If $A\subseteq M$ is Cartan and $p\in A$ a projection then $pAp\subseteq pMp$ is Cartan
%\end{prop}
%\note{do we use it?}
\subsection*{Strategy of proof}
The proof of Theorem \ref{chifan-peterson-thm} is quite involved and we will proceed in a series of steps. The first step consists in building another von Neumann algebra $\tilde{M}$ containing $M$ and a \emph{malleable deformation} (in the sense of Popa) of this algebra; i.e.~a one-parameter family of automorphisms $\alpha_t$ on $\tilde{M}$ converging to the identity in point-2-norm. This is done in the following section. The next main part of the program is to use this deformation  to show that the rigidity arising from the property $\T$ subgroup $\Gamma_0$ can be transferred into the the other picture of $M$ as $B\rtimes \Lambda$ by means of a certain sequence of elements in $\Lambda$; this is done in Section \ref{transfer-section}. In Section \ref{implementation-section} we  show how the transferred rigidity can be used to prove  that the deformation $\alpha_t$ is implemented by a partial isometry on $B$, and in section \ref{conjugacy-section} we then show how to pass from such an implementation to conjugacy of $A$ and $B$.

\section{Deformations arising from cocycles}\label{deform-from-cocycles}
In this section we show, following the approach of Thomas Sinclair \cite{sinclair}, how to construct a von Neumann algebra with a so-called \emph{malleable deformation} from a $1$-cocycle on a group. This construction will be essential in the proof of our main theorem.\\

Consider again our standard setup:  $\Gamma$ is the countable group from Theorem \ref{chifan-peterson-thm}, $\pi\colon \Gamma \to H$ a mixing orthogonal representation and $b\colon \Gamma \to H$ an unbounded cocycle. Moreover, $\Gamma$ acts on a standard probability space $(X,\mu)$ and we put $A:=L^\infty(X) $, $M:=A\rtimes \Gamma$ and denote by $(u_g)_{g\in \Gamma}$ the natural unitaries implementing the action.  Fix and orthonormal basis $\{e_n\}_{n\in \NN}$ in $H$ and put $H_0:=\spann_\RR\{e_n\mid n\in \NN\}$. Consider now the measure space
\[
(Z,\nu):= \prod_{n\in \NN} \left(\RR,  \tfrac{1}{\sqrt{2\pi}}e^{\sfrac{-x^2}{2}}dx \right)
\]
and define a map $\omega\colon H_0\to \U(L^\infty(Z))$ by 
\[
\omega \left(\sum_n c_ne_n \right): (z_n)_n \longmapsto \exp( \sqrt{2}i\sum_nc_n z_n ).
\]
As the following two lemmas show, the map $\omega$ ``exponentiates'' the additive group structure on $H$ into the multiplicative structure on the unitary group $\U(L^\infty(Z))$.

\begin{lem}\label{omega-formler}
We have $\omega(\xi +\eta)=\omega(\xi)\omega(\eta)$, $\omega(-\xi)=\omega(\xi)^*$ and $\tau(\omega(\xi))=e^{-\|\xi\|^2}$ for all $\xi,\eta \in H_0$.
\end{lem}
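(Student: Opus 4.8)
The plan is to verify each of the three identities by direct computation, using the explicit formula for $\omega$ and the fact that the measure $\nu$ on $Z$ is a product of standard Gaussians. First I would write $\xi=\sum_n c_n e_n$ and $\eta=\sum_n d_n e_n$ (finite linear combinations, by definition of $H_0$). The additivity $\omega(\xi+\eta)=\omega(\xi)\omega(\eta)$ is then immediate: both sides evaluated at $(z_n)_n$ give $\exp(\sqrt{2}i\sum_n(c_n+d_n)z_n)$, since multiplication of exponentials adds the exponents pointwise. Setting $\eta=-\xi$ in this identity, together with the obvious pointwise conjugate relation $\overline{\omega(\xi)(z)}=\exp(-\sqrt{2}i\sum_n c_n z_n)=\omega(-\xi)(z)$, yields $\omega(-\xi)=\omega(\xi)^*$ (recall the adjoint of a unitary in $L^\infty(Z)$ is pointwise complex conjugation).

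For the trace formula, I would use that $\tau$ on $L^\infty(Z)$ is integration against $\nu$. Thus $\tau(\omega(\xi))=\int_Z \exp(\sqrt{2}i\sum_n c_n z_n)\,d\nu$, which by Fubini over the finitely many indices $n$ appearing in $\xi$ factors as $\prod_n \int_{\RR}\exp(\sqrt{2}i c_n t)\tfrac{1}{\sqrt{2\pi}}e^{-t^2/2}\,dt$. The standard Gaussian characteristic function gives $\int_{\RR}e^{ist}\tfrac{1}{\sqrt{2\pi}}e^{-t^2/2}\,dt=e^{-s^2/2}$, so with $s=\sqrt{2}c_n$ each factor equals $e^{-c_n^2}$, and the product is $e^{-\sum_n c_n^2}=e^{-\|\xi\|^2}$, as claimed. (The sum is finite, so there is no convergence issue; one may note in passing that this is exactly what forces the normalization constant $\sqrt{2}$ in the definition of $\omega$.)

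Since the computations are elementary, there is no real obstacle here — the only point requiring a word of care is the identification of the adjoint and the trace on $L^\infty(Z)$ with pointwise conjugation and Gaussian integration respectively, and the appeal to Fubini, which is harmless because only finitely many coordinates are involved. I would keep the write-up to a few lines, essentially the three displayed calculations above.
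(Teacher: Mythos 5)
Your proof is correct and follows essentially the same route as the paper: pointwise computation for the multiplicativity and adjoint identities, and factorization of the Gaussian integral coordinate-by-coordinate (the paper uses $\tfrac{1}{\sqrt{2\pi}}\int_{\RR}e^{tx}e^{-x^2/2}\,dx=e^{t^2/2}$ with $t=\sqrt{2}ic_n$, which is the same characteristic-function computation you invoke). Your added remarks — that elements of $H_0$ are finite linear combinations so Fubini is harmless, and that the constant $\sqrt{2}$ is exactly what produces $e^{-\|\xi\|^2}$ — are accurate but not needed beyond what the paper records.
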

\begin{proof}
The two first claims follow from straightforward calculations and the last is seen as follows: Write $\xi=\sum_n c_n e_n $ and recall that $\frac{1}{\sqrt{2\pi}}\int_{\RR} e^{tx}e^{-\sfrac{x^2}{2}}dx=e^{\sfrac{t^2}{2}}$ for all $t\in \CC$. Then 
\begin{align*}
\tau(\omega(\xi)) &= \int_{(z_n)_n\in Z} \omega(\xi)((z_n)_n) d\nu((z_n)_n) \\
&= \int_{(z_n)_n\in Z} \prod_{n\in \NN} e^{\sqrt{2} i c_n z_n }  d\left ( \otimes_{n\in \NN} \tfrac{1}{\sqrt{2\pi}}e^{\sfrac{-z_n^2} {2}}dz_n \right )   \\
&= \prod_{n\in \NN} \tfrac{1}{\sqrt{2\pi}}\int_{\RR}e^{\sqrt{2}ic_nz_n } e^{\sfrac{-z_n^2}{2}}dz_n\\
&=\prod_{n\in \NN} e^{\frac{(\sqrt{2}i c_n)^2}{2}}=e^{-\sum_n c_n^2} =e^{-\|\xi\|^2}.
\end{align*}
\end{proof}
\begin{lem}
The map $\omega\colon H_0 \to \U(L^\infty(Z))$ is $\|\cdot\|_H$--SOT continuous and therefore extends to a map $\omega\colon H\to \U(L^\infty(Z))$ with the properties described in Lemma \ref{omega-formler}.
\end{lem}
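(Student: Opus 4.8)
The plan is to reduce everything to the $\|\cdot\|_2$-norm and the three formulas of Lemma \ref{omega-formler}. Recall first that on the (norm-bounded) unitary group of the finite von Neumann algebra $L^\infty(Z)$ the SOT coincides with the topology induced by $\|\cdot\|_2$, so it suffices to show that $\xi\mapsto\omega(\xi)$ is continuous from $(H_0,\|\cdot\|_H)$ into $(\U(L^\infty(Z)),\|\cdot\|_2)$. For $\xi,\eta\in H_0$, using $\omega(\xi)^*\omega(\eta)=\omega(-\xi)\omega(\eta)=\omega(\eta-\xi)$, $\omega(\eta-\xi)^*=\omega(\xi-\eta)$ and $\tau(\omega(\zeta))=e^{-\|\zeta\|^2}$, one computes
\[
\|\omega(\xi)-\omega(\eta)\|_2^2=\tau\big(2-\omega(\eta-\xi)-\omega(\xi-\eta)\big)=2-2e^{-\|\xi-\eta\|_H^2}.
\]
Since $t\mapsto 2-2e^{-t^2}$ is continuous and vanishes at $0$, this already shows that $\omega$ is \emph{uniformly} continuous on $H_0$, which is a bit more than asked for and makes the extension step cleaner.

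Next I would extend $\omega$ by density: as $\{e_n\}_{n\in\NN}$ is an orthonormal basis of $H$, the real span $H_0$ is dense in $H$. Given $\xi\in H$ pick $\xi_k\in H_0$ with $\xi_k\to\xi$; by the displayed estimate $(\omega(\xi_k))_k$ is $\|\cdot\|_2$-Cauchy, hence converges in $L^2(Z)$ to some vector $v$, and I set $\omega(\xi):=v$. Independence of the approximating sequence is immediate by interleaving two such sequences, and continuity of the extended map follows by passing to the limit in the estimate above. The point that needs a word of care is that $v$ really lies in $\U(L^\infty(Z))$ and not merely in $L^2(Z)$: the closed unit ball of $L^\infty(Z)$ is $\|\cdot\|_2$-closed (pass to an a.e.-convergent subsequence of the $\omega(\xi_k)$), so $v\in L^\infty(Z)$ with $\|v\|_\infty\leq1$; and since multiplication is jointly SOT-continuous on bounded sets, $\omega(\xi_k)^*\omega(\xi_k)\to v^*v$ and $\omega(\xi_k)\omega(\xi_k)^*\to vv^*$, whence $v^*v=vv^*=1$.

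Finally I would check that the identities of Lemma \ref{omega-formler} persist for the extension. Choosing $\xi_k\to\xi$ and $\eta_k\to\eta$ in $H_0$, we have $\omega(\xi_k+\eta_k)=\omega(\xi_k)\omega(\eta_k)$ and $\omega(-\xi_k)=\omega(\xi_k)^*$ for each $k$; letting $k\to\infty$ in $\|\cdot\|_2$, using joint SOT-continuity of multiplication and SOT-continuity of the adjoint on the unitary group together with $\xi_k+\eta_k\to\xi+\eta$ and $-\xi_k\to-\xi$, gives $\omega(\xi+\eta)=\omega(\xi)\omega(\eta)$ and $\omega(-\xi)=\omega(\xi)^*$. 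Likewise $\tau(\omega(\xi))=\lim_k\tau(\omega(\xi_k))=\lim_k e^{-\|\xi_k\|^2}=e^{-\|\xi\|_H^2}$ by $\|\cdot\|_2$-continuity of $\tau$. The only genuine (and minor) obstacle in the whole argument is the verification just sketched that $\|\cdot\|_2$-limits of unitaries land back inside $\U(L^\infty(Z))$; everything else is a routine passage to the limit.
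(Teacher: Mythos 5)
Your proof is correct and takes essentially the same route as the paper: the identity $\|\omega(\xi)-\omega(\eta)\|_2^2=2(1-e^{-\|\xi-\eta\|^2})$ obtained from Lemma \ref{omega-formler} gives (uniform) continuity, and the extension then follows by density together with the completeness of $\U(L^\infty(Z))$ in the $2$-norm. The only difference is that you spell out the sequential SOT-completeness of $\U(L^\infty(Z))$ (via $\|\cdot\|_2$-closedness of the unit ball and joint SOT-continuity of multiplication on bounded sets), a point the paper merely asserts ``can be proved easily by hand''; your verification of it, and of the persistence of the identities under the extension, is correct.
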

\begin{proof} Let $\xi,\eta \in H_0$. Using Lemma \ref{omega-formler} we get
\begin{align*}
\|\omega(\xi)-\omega(\eta)\|_2^2 &= \tau((\omega(\xi)-\omega(\eta))^* (\omega(\xi)-\omega(\eta)))\\
&= \tau(1+1 - \omega(-\xi +\eta) -\omega(-\eta +\xi))\\
&= 2(1-e^{-\|\xi-\eta\|^2}) 
\end{align*}
Hence $\omega$ is $\|\cdot\|_H$ -- $\|\cdot\|_{2,\tau}$ continuous. Since the SOT and the 2-norm topology coincides on $\U(L^\infty(Z))$ and since $\U(L^\infty(Z))$ is sequentially SOT-complete (this can be proved easily by hand) it follows that $\omega$ extends.
\end{proof}
Define $D_0:=\spann_\CC \{\omega(\xi)\mid \xi \in H\}$ and note that $D_0$ is a $*$-subalgebra in $L^\infty(Z)$. Denote by $D$ the enveloping von Neumann algebra $D_0''\subseteq L^\infty (Z)$ and endow $D$ with the  $\Gamma$-action given by
\[
\gamma\cdot \omega(\xi):=\omega(\pi(\gamma)\xi)
\]
Since
\[
\tau(\gamma\omega(\xi))=\tau(\omega(\pi(\gamma)x))=e^{-\|\pi(\gamma)\xi\|^2}=e^{-\|\xi\|^2}=\tau(\omega(\xi))
\]
this action is trace preserving on $D_0$ and thus on all of $D$ by normality. The action $\Gamma \curvearrowright D$ is called the \emph{Gaussian action}.  Consider now the tensor product $D\htens A$ endowed with the diagonal $\Gamma$-action $\sigma$ and define $\tilde{M}:= (D\htens A)\rtimes \Gamma$. We will consider $M=A\rtimes \Gamma$ as a subalgebra of $\tilde{M}$ via its image $(1\tens A)\rtimes \Gamma$ therein. Note that the natural unitaries  $\tilde{u}_g$ implementing the action $\Gamma \curvearrowright D\htens A$ have the property that
\[
\tilde{u}_g (1\tens a)\tilde{u}_g^*= 1\tens u_gau_g^*,
\]
and we will therefore consider them as natural extension of the $u_g$'s and just denote $\tilde{u}_g$ by $u_g$ in the following.
Next we construct a deformation of $\tilde{M}$ as follows: for $t\in \RR$ define $\alpha_t\colon \tilde{M} \to \tilde{M}$ by
\[
\alpha_t|_{D\htens A}=\id \quad \text{ and } \quad \alpha_t(u_g)=(\omega(tb(g)) \tens 1)u_g.
\]
The word ``deformation'' here refers to Popa's notion of \emph{malleable deformation}, the precise meaning of which is contained in
the following result.

\begin{prop}
For every $t\in \RR$ we have $\alpha_t\in \Aut(\tilde{M})$ and $\lim_{t\to 0}\|\alpha_t(x)-x\|_2=0$ for every $x\in \tilde{M}$. 
\end{prop}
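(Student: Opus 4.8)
The plan is to verify the two assertions separately: that each $\alpha_t$ is a well-defined trace-preserving $*$-automorphism of $\tilde M$, and that $\alpha_t \to \id$ in the point-$2$-norm topology.

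For the first part, I would first check that the prescription $\alpha_t|_{D\htens A}=\id$, $\alpha_t(u_g)=(\omega(tb(g))\tens 1)u_g$ actually defines a $*$-homomorphism on $\tilde M=(D\htens A)\rtimes\Gamma$. By the universal property of the crossed product it suffices to exhibit a trace-preserving action of $D\htens A$ and unitaries satisfying the crossed product relations; concretely, one checks that the elements $w_g:=(\omega(tb(g))\tens 1)u_g$ are unitaries in $\tilde M$ satisfying $w_g(d\tens a)w_g^*=\sigma_g(d\tens a)$ and the cocycle-type identity $w_gw_h=w_{gh}$. The conjugation relation is immediate since $\omega(tb(g))\tens 1\in D\htens A$ and $u_g$ already implements $\sigma_g$. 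The multiplicativity $w_gw_h=w_{gh}$ is where the cocycle identity enters: expanding,
\[
w_g w_h=(\omega(tb(g))\tens 1)u_g(\omega(tb(h))\tens 1)u_h=(\omega(tb(g))\,\omega(t\pi(g)b(h))\tens 1)u_{gh},
\]
using $u_g(\omega(tb(h))\tens1)u_g^*=\omega(\pi(g)tb(h))\tens1$, and then Lemma \ref{omega-formler} gives $\omega(tb(g))\omega(t\pi(g)b(h))=\omega(t(b(g)+\pi(g)b(h)))=\omega(tb(gh))$ by the $1$-cocycle relation $b(gh)=\pi(g)b(h)+b(g)$. Hence $\alpha_t$ extends to a normal unital $*$-endomorphism of $\tilde M$. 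To see it is an automorphism, observe that the inverse is given by the same formula with $t$ replaced by $-t$: indeed $\alpha_{-t}(w_g)=\alpha_{-t}((\omega(tb(g))\tens1)u_g)=(\omega(tb(g))\tens1)(\omega(-tb(g))\tens1)u_g=u_g$, so $\alpha_t\circ\alpha_{-t}=\id$ on generators, and similarly $\alpha_{-t}\circ\alpha_t=\id$. Trace-preservation follows since $\alpha_t$ fixes $D\htens A$ pointwise and sends $u_g$ to $(\text{unitary in }D\htens A)u_g$, so $\tau\circ\alpha_t$ and $\tau$ agree on the Fourier expansion: $\tau(\alpha_t(\sum_g x_g u_g))=\tau(x_e)=\tau(\sum_g x_gu_g)$.

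For the convergence $\|\alpha_t(x)-x\|_2\to0$, I would first establish it on the $\|\cdot\|_2$-dense subalgebra $(D\htens A)\rtimes_{\alg}\Gamma$ of finitely supported sums $x=\sum_{g\in F}x_g u_g$. For such $x$,
\[
\|\alpha_t(x)-x\|_2^2=\Bigl\|\sum_{g\in F}x_g\bigl((\omega(tb(g))\tens1)-1\bigr)u_g\Bigr\|_2^2=\sum_{g\in F}\|x_g((\omega(tb(g))\tens1)-1)\|_2^2,
\]
by orthogonality of the $u_g$, and each term is bounded by $\|x_g\|_\infty^2\,\|\omega(tb(g))-1\|_2^2=\|x_g\|_\infty^2\cdot2(1-e^{-t^2\|b(g)\|^2})$ using the computation from the proof of the continuity lemma. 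Since $F$ is finite this sum $\to0$ as $t\to0$. Then, to pass to arbitrary $x\in\tilde M$, I use a standard $3\varepsilon$-argument together with the uniform bound $\|\alpha_t(y)-y\|_2\le\|\alpha_t(y)-y\|_\infty\le 2\|y\|_\infty$ is too crude; instead the right uniform estimate is $\|\alpha_t(y)\|_2=\|y\|_2$ (since $\alpha_t$ is trace preserving), so $\|\alpha_t(x-x')\|_2=\|x-x'\|_2$. Given $x\in\tilde M$ and $\varepsilon>0$, pick $x'\in(D\htens A)\rtimes_{\alg}\Gamma$ with $\|x-x'\|_2<\varepsilon/3$; then
\[
\|\alpha_t(x)-x\|_2\le\|\alpha_t(x-x')\|_2+\|\alpha_t(x')-x'\|_2+\|x'-x\|_2<\tfrac{2\varepsilon}{3}+\|\alpha_t(x')-x'\|_2,
\]
and the middle term is $<\varepsilon/3$ for $|t|$ small by the finitely-supported case. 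Hence $\lim_{t\to0}\|\alpha_t(x)-x\|_2=0$.

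I do not expect a serious obstacle here; the statement is essentially bookkeeping once Lemma \ref{omega-formler} and the cocycle identity are in hand. The one point requiring a little care is the very first step—confirming that the formula genuinely extends to a $*$-endomorphism of the von Neumann algebra $\tilde M$ rather than merely the algebraic crossed product—which is handled by noting the assignment respects all defining relations and then invoking normality/the universal property, with the automorphism property certified by exhibiting the explicit inverse $\alpha_{-t}$. The density step for the $2$-norm convergence is the only other place one must be slightly careful, and there the key is that trace-preservation gives an isometry on $L^2$, making the approximation argument clean.
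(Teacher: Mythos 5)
Your proof is correct and follows essentially the same route as the paper: multiplicativity of $\alpha_t$ via the cocycle identity and Lemma \ref{omega-formler}, the automorphism property via the inverse $\alpha_{-t}$ (the paper phrases this as the one-parameter group law $\alpha_t\alpha_s=\alpha_{t+s}$, $\alpha_0=\id$), and the $2$-norm convergence by the Gaussian trace formula on the algebraic crossed product followed by a density argument. Your replacement of the paper's appeal to Kaplansky by a $3\varepsilon$-argument using that the trace-preserving $\alpha_t$ is an $L^2$-isometry is a minor (and arguably cleaner) variant of the same step.
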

\begin{proof}
To prove that $\alpha_t$ is a $*$-homomorphism it suffices to show that
\[
\alpha_t(u_{gh})=\alpha_t(u_g)\alpha_t(u_h) \ \text{ and } \ \alpha_t(u_{g^{-1}})=\alpha_t(u_g)^*.
\]
Using the cocycle identity, the multiplicativity follows from a straight forward calculation:
\begin{align*}
\alpha_t(u_g)\alpha_t(u_h) &=  (\omega(tb(g))\tens 1) u_g   (\omega(tb(h))\tens 1) u_h   \\
&= (\omega(tb(g))\tens 1  )u_g(\omega(tb(h))\tens 1)u_g^* u_{gh}^{\phantom{*}}\\
&=(\omega(tb(g))\tens 1) (\omega (t\pi(g)b(h)  ) \tens 1)u_{gh}\\
&=(\omega(t (b(g) +\pi(g)b(h))) \tens 1 )u_{gh}\\
&=(\omega(tb(gh))\tens 1)u_{gh}\\
&=\alpha_t(u_{gh}).
\end{align*}
Noting that $0=b(e) = b(g^{-1}g)=\pi(g^{-1})b(g) + b(g^{-1})$ we get
\begin{align*}
\alpha_t(u_{g^{-1}}) &=(\omega(tb(g^{-1}))\tens 1)u_{g^{-1}}\\
&= (\omega(-t \pi(g^{-1})b(g) )\tens 1)u_{g}^*\\
&=\sigma_{g^{-1}}( \omega(-tb(g)\tens 1) )u_g^*\\
&=u_{g}^*(\omega(tb(g))\tens 1)u_{g}^{\phantom{*}}u_g^*\\
&=\alpha_t(u_g)^*.
\end{align*}
Hence $\alpha_t$ is a $*$-endomorphism of $\tilde{M}$. Since $\alpha_{t}\alpha_{s}=\alpha_{t+s}$ and $\alpha_0=\id$ it follows that $\alpha_t$ is a 1-parameter group of $*$-automorphisms.\\

To prove the convergence statement it suffices (by Kaplansky's density theorem) to treat elements in $(D\htens A)\rtimes_{\alg}\Gamma$ and since $\alpha_t {|}_{D\htens A}=\id$ it therefore suffices to prove it for elements of the form $x=u_g$ for $g\in \Gamma$. For such an element we
 simply calculate the 2-norm:
\begin{align*}
\|\alpha_t(u_g)-u_g\|_2^2 &=\tau_{\tilde{M}}((\alpha_t(u_g) - u_g)^*(\alpha_t(u_g) -u_g))\\
&=2 -\tau_{\tilde{M}}(\alpha_t(u_g)^* u_g)-\tau(u_g^*\alpha_t(u_g))\\
&=2-\tau_{\tilde{M}}((\omega(tb(g^{-1})) \tens 1  )u_{g}^*u_g^{\phantom{*}}  ) -\tau_{\tilde{M}}(u_g^*(\omega(tb(g))\tens 1 )u_g  )\\
&= 2 - \tau_{D\htens A}(\omega(tb(g^{-1}))\tens 1)- \tau_{D\htens A} (\omega(tb(g))\tens 1)\\
&= 2-e^{-t^2\|b(g^{-1})\|^2}-e^{-t^2\|b(g)\|^2}\underset{t\to 0}{\To} 0.
\end{align*}

\end{proof}

Actually, the convergence of $\alpha_t$  is monotone on $M$ as the following result shows.
\begin{prop}\label{monotone-prop}
If $r\geq s\geq 0$ then $\|\alpha_r(x)-x\|_2 \geq \|\alpha_s(x)-x\|_2$ for all $x\in M$.
\end{prop}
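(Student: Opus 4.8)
The plan is to turn the statement into a completely explicit computation of Fourier coefficients in $\tilde M$. First I would record a reduction to the algebraic crossed product: each $\alpha_t$ is a trace preserving automorphism of $\tilde M$, hence extends to a $\|\cdot\|_2$-isometry of $L^2(\tilde M)$, so the maps $x\mapsto\|\alpha_r(x)-x\|_2$ and $x\mapsto\|\alpha_s(x)-x\|_2$ are both $\|\cdot\|_2$-continuous on $M$. Since $A\rtimes_{\alg}\Gamma$ is $\|\cdot\|_2$-dense in $M$, it suffices to prove the inequality for $x=\sum_{g\in\Gamma}a_gu_g$ a finite sum with $a_g\in A$ (identified with $1\tens a_g\in D\htens A$).

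For such an $x$, using $\alpha_t|_{D\htens A}=\id$, $\alpha_t(u_g)=(\omega(tb(g))\tens 1)u_g$, and the fact that $1\tens a_g$ commutes with $\omega(tb(g))\tens 1$ inside $D\htens A$, one obtains
\[
\alpha_t(x)-x=\sum_{g\in\Gamma}\big((\omega(tb(g))-1)\tens a_g\big)u_g .
\]
This is the Fourier decomposition of $\alpha_t(x)-x$ with respect to the crossed product $\tilde M=(D\htens A)\rtimes\Gamma$, so by orthogonality of the subspaces $(D\htens A)u_g\subseteq L^2(\tilde M)$, traciality of $\tau_{\tilde M}$, and the factorization $\tau_{D\htens A}=\tau_D\otimes\tau_A$,
\[
\|\alpha_t(x)-x\|_2^2=\sum_{g\in\Gamma}\|\omega(tb(g))-1\|_2^2\,\|a_g\|_2^2 .
\]
Now, directly from Lemma \ref{omega-formler} (using $\omega(0)=1$, $\omega(\xi)^*=\omega(-\xi)$, $\omega(\xi)\omega(\eta)=\omega(\xi+\eta)$ and $\tau(\omega(\xi))=e^{-\|\xi\|^2}$), one has $\|\omega(\xi)-1\|_2^2=2\big(1-e^{-\|\xi\|^2}\big)$; combined with $\|tb(g)\|=|t|\,\|b(g)\|$ this gives
\[
\|\alpha_t(x)-x\|_2^2=\sum_{g\in\Gamma}2\big(1-e^{-t^2\|b(g)\|^2}\big)\|a_g\|_2^2 .
\]

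Finally I would conclude by a termwise comparison: for each fixed $g$, the function $t\mapsto 1-e^{-t^2\|b(g)\|^2}$ is non-negative and non-decreasing on $[0,\infty)$, since $t\mapsto t^2$ is. Hence, if $r\geq s\geq 0$, every summand on the right-hand side for $t=r$ dominates the corresponding summand for $t=s$; summing over $g$ and taking square roots yields $\|\alpha_r(x)-x\|_2\geq\|\alpha_s(x)-x\|_2$, as desired. I do not expect any genuine obstacle here — the computation is short and self-contained; the only point requiring a sentence of care is the passage from $A\rtimes_{\alg}\Gamma$ to all of $M$, which is handled by the density/continuity remark in the first paragraph (alternatively, the last displayed formula is valid verbatim for every $x\in M$ by Parseval in $L^2(\tilde M)$, the sum being over the $L^2$-Fourier coefficients $a_g=E_A(xu_g^*)$).
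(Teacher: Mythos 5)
Your proof is correct and follows essentially the same route as the paper: expand $x$ in Fourier coefficients over $\Gamma$, use orthogonality and Lemma \ref{omega-formler} to get $\|\alpha_t(x)-x\|_2^2=\sum_g 2\bigl(1-e^{-t^2\|b(g)\|^2}\bigr)\|x_g\|_2^2$, and compare termwise. (Your exponent $e^{-t^2\|b(g)\|^2}$ is the correct one; the paper's displayed $e^{-r\|b(g)\|^2}$ is a typo that does not affect the monotonicity argument.)
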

\begin{proof}
Expand $x$ as $\sum_{g\in \Gamma}(1\tens x_g)u_g \in M\subset \tilde{M}$ and compute:
\begin{align}
\|\alpha_r(x)-x  \|_2^2 &=\|\sum_{g\in \Gamma} (\omega(rb(g)) -1 )\tens x_g)u_g\|_2^2\notag\\
&=\sum_{g\in \Gamma} \|\omega (rb(g))-1  \|_2^2 \|x_g\|_2^2\notag\\
&=\sum_{g\in \Gamma} 2(1-e^{-r\|b(g)\|^2}) \|x_g\|_2^2 \tag{using Lemma \ref{omega-formler}}
\end{align}
If $r\geq s\geq 0$ then $1-e^{-r\|b(g)\|^2}\geq 1-e^{-s\|b(g)\|^2}$ and hence $\|\alpha_r(x)-x  \|_2^2\geq \|\alpha_s(x)-x  \|_2^2$ as desired.

\end{proof}

Next we need to analyze the deformation $\alpha_t$ in more detail. This is done in sequence  lemmas, which may a first glance seem technical and not that easy to access.
However, despite their somewhat technical nature these lemmas really reveal a lot of information about the behavior of the deformation  --- so hang in there! Heuristically,  the first lemma shows that if an element $x\in M$ is moved close to $M$ by $\alpha_t$ then actually $\alpha_t(x)$ has to be close to $x$.
\begin{lem}\label{funky-lem}
For every $t\in \RR$ and every $x\in M$ we have $\tau_{\tilde{M}}(\alpha_t(x)x^* )\geq \| E_M(\alpha_t(x))\|_2^2$.
\end{lem}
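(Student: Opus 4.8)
The plan is to expand $x = \sum_{g\in\Gamma}(1\tens x_g)u_g$ in $M\subset\tilde M$ and compute both sides explicitly using the formula $\alpha_t(u_g) = (\omega(tb(g))\tens 1)u_g$. Since the automorphisms $\sigma_g$ are trace preserving and $\tau_{\tilde M} = \tau_D\tens\tau_A\tens(\text{trace on }\Gamma)$, the cross terms between different group elements vanish, and one finds
\[
\tau_{\tilde M}(\alpha_t(x)x^*) = \sum_{g\in\Gamma}\tau_D(\omega(tb(g)))\,\|x_g\|_2^2 = \sum_{g\in\Gamma}e^{-t^2\|b(g)\|^2}\|x_g\|_2^2,
\]
using Lemma \ref{omega-formler} for the last equality; in particular this quantity is real and nonnegative.

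Next I would identify $E_M(\alpha_t(x))$. Since $\alpha_t(x) = \sum_g (\omega(tb(g))\tens x_g)u_g$ and $M = (1\tens A)\rtimes\Gamma$, the conditional expectation $E_M\colon\tilde M\to M$ acts on each Fourier component by applying $E_{\CC 1}\tens\id$ on the $D\htens A$ part, i.e.\ $\omega(tb(g))\tens x_g \mapsto \tau_D(\omega(tb(g)))(1\tens x_g)$. Hence $E_M(\alpha_t(x)) = \sum_g e^{-t^2\|b(g)\|^2}(1\tens x_g)u_g$, and therefore
\[
\|E_M(\alpha_t(x))\|_2^2 = \sum_{g\in\Gamma}e^{-2t^2\|b(g)\|^2}\|x_g\|_2^2.
\]

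The inequality then reduces to the elementary termwise estimate $e^{-2t^2\|b(g)\|^2}\leq e^{-t^2\|b(g)\|^2}$, which holds since $t^2\|b(g)\|^2\geq 0$ makes $e^{-t^2\|b(g)\|^2}\leq 1$. Summing against the nonnegative weights $\|x_g\|_2^2$ gives $\|E_M(\alpha_t(x))\|_2^2 \leq \tau_{\tilde M}(\alpha_t(x)x^*)$, as desired. The only mildly delicate points are justifying that $E_M$ on $\tilde M$ is computed componentwise by $\tau_D\tens\id_A$ on the $D\htens A$ factor (which follows from uniqueness of the trace-preserving conditional expectation together with $D\htens A = \CC 1 \htens A$ modulo the complementary trace-zero part), and checking that the Fourier expansion manipulations converge in $L^2$ — both standard, so I expect no real obstacle here; the main ``content'' is simply recognizing that passing from $\alpha_t(x)x^*$ to $E_M(\alpha_t(x))$ squares the Gaussian damping factor.
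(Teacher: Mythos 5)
Your proposal is correct and follows essentially the same route as the paper: expand $x$ in Fourier components, compute $\tau_{\tilde M}(\alpha_t(x)x^*)=\sum_g e^{-t^2\|b(g)\|^2}\|x_g\|_2^2$ and $\|E_M(\alpha_t(x))\|_2^2=\sum_g e^{-2t^2\|b(g)\|^2}\|x_g\|_2^2$, and compare termwise. The paper phrases the final step as ``the two expressions differ by a factor of two in the exponent,'' which is exactly your observation that $e^{-2s}\leq e^{-s}$ for $s\geq 0$.
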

\begin{proof}
Take $x=\sum_g(1\tens a_g)u_g\in M=(1\tens A)\rtimes \Gamma$ and compute the left hand side:
\begin{align*}
\tau_{\tilde{M}}(\alpha_t(x)x^*) &= \tau_{\tilde{M}} \left( \left(\sum_g (1\tens a_g)(\omega(tb(g))\tens 1)u_g\right)\left(\sum_ h u_h^*(1\tens a_h^*)\right)  \right ) \\
&= \tau_{\tilde{M}}Ê\left ( \sum_{g, h} (\omega(tb(g)) \tens a_g ) u_{gh^{-1}}(1\tens a_h^*)  \right )\\
&= \tau_{\tilde{M}}Ê\left ( \sum_{g, h} (\omega(tb(g)) \tens a_g ) \sigma_{gh^{-1}}(1\tens a_h^*)  u_{gh^{-1}}  \right )\\
&= \tau_{D\htens A}Ê \left ( E_{D\htens A}\left( \sum_{g, h} (\omega(tb(g)) \tens a_g ) \sigma_{gh^{-1}}(1\tens a_h^*)  u_{gh^{-1}}\right)  \right )\\
&=\tau_{D\htens A} \left(\sum_g \omega(tb(g)) \tens a_g^{\phantom{*}}a_g^*   \right) \\
&=\sum_g e^{-\| tb(g)\|^2}\|a_g\|_2^2
\end{align*}
Using that  $E_M$ is  simply given by $(d\tens a)u_g\mapsto \tau(d) au_g$, computing the right hand side we get 
\begin{align*}
\|E_M(\alpha_t(x)) \|_2^2 &= \left\| E_M\left( \sum_g(1\tens a_g)(\omega(tb(g))\tens 1)u_g \right) \right\|_2^2\\
&= \left\| \sum_g \tau(\omega(tb(g)))a_g u_g \right\|_2^2\\
&= \sum_{g}|\tau(\omega(tb(g)))|^2 \|a_g\|_2^2\\
&= \sum_g e^{-2\| tb(g)\|^2}\|a_g\|_2^2
\end{align*}
Hence the two expressions differ by a factor of two in the exponent and the inequality follows.
\end{proof}

As a consequence of Lemma \ref{funky-lem} we  now get the following result which shows if a subalgebra is not moved to close to the orthogonal complement of $M$ by the deformation $\alpha_t$, then on this subalgebra the deformation is ``implemented'' by a partial isometry. The precise statement is as follows:

\begin{cor}\label{implement-cor}
Let $Q\subseteq M$ be a von Neumann subalgebra and let $\delta>0$ and $t\in \RR$ be given. Assume that $\|E_M(\alpha_t(x))\|_2\geq \delta$ for all $x\in \U(Q)$. Then there exists a non-zero partial isometry $v\in \tilde{M}$ such that $\alpha_t(x)v=vx$ for all $x\in Q$. 
 \end{cor}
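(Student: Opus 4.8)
The plan is to run the standard ``a deformation is implemented by a partial isometry'' argument from Popa's toolbox, the point being that Lemma~\ref{funky-lem} is exactly what turns the hypothesis into a genuine nontriviality statement. First I would set up the geometry inside $L^2(\tilde M)$. For $u\in\U(Q)$ let $T_u\colon L^2(\tilde M)\to L^2(\tilde M)$ be the map $\eta\mapsto \alpha_t(u)\,\eta\, u^*$; being the composition of left multiplication by the unitary $\alpha_t(u)$ and right multiplication by the unitary $u^*$, it is a surjective linear isometry of $L^2(\tilde M)$, hence weakly continuous. Put $\mathcal S:=\{\alpha_t(u)u^*\mid u\in \U(Q)\}\subseteq \U(\tilde M)\subseteq (\tilde M)_1\subseteq L^2(\tilde M)$. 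Since $T_u(\alpha_t(w)w^*)=\alpha_t(uw)(uw)^*$, each $T_u$ permutes $\mathcal S$, hence maps $\conv(\mathcal S)$ into itself and therefore, by weak continuity, maps the weakly closed convex set $\mathcal C:=\overline{\conv(\mathcal S)}^{\,w}$ into itself. As $\mathcal C$ is a nonempty bounded weakly closed convex subset of the Hilbert space $L^2(\tilde M)$, it has a unique element $\xi$ of minimal $\|\cdot\|_2$-norm, and since each $T_u$ is an isometry preserving $\mathcal C$, uniqueness forces $T_u(\xi)=\xi$, i.e.
\[
\alpha_t(u)\,\xi\, u^*=\xi\qquad\text{for all }u\in\U(Q).
\]

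Next I would show $\xi\neq 0$; this is where the hypothesis is used. For each $u\in\U(Q)$ we have $\ip{\alpha_t(u)u^*}{1}=\tau_{\tilde M}(\alpha_t(u)u^*)$, and Lemma~\ref{funky-lem} applied to $x=u\in M$ together with the assumption gives
\[
\tau_{\tilde M}(\alpha_t(u)u^*)\geq \|E_M(\alpha_t(u))\|_2^2\geq \delta^2 ;
\]
in particular these numbers are real. Taking convex combinations and then weak limits (the functional $\eta\mapsto \ip{\eta}{1}$ is bounded, hence weakly continuous) yields $\ip{\xi}{1}\geq\delta^2>0$, so $\xi\neq 0$.

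Finally I would pass from the $L^2$-intertwiner $\xi$ to the desired partial isometry by polar decomposition. Write $\xi=v|\xi|$ with $|\xi|=(\xi^*\xi)^{1/2}\in L^2(\tilde M)$ and $v\in\tilde M$ the partial isometry part, whose initial projection $v^*v$ is the support projection of $|\xi|$; since $\xi\neq 0$ we have $v\neq 0$. Fix $u\in\U(Q)$. Then $(\alpha_t(u)\xi u^*)^*(\alpha_t(u)\xi u^*)=u\,\xi^*\xi\,u^*$, so $|\alpha_t(u)\xi u^*|=u|\xi|u^*$, and $\alpha_t(u)vu^*$ is a partial isometry with initial projection $u(v^*v)u^*=\supp(u|\xi|u^*)$ satisfying $(\alpha_t(u)vu^*)(u|\xi|u^*)=\alpha_t(u)\xi u^*$; hence $\alpha_t(u)\xi u^*=(\alpha_t(u)vu^*)(u|\xi|u^*)$ is the polar decomposition of $\alpha_t(u)\xi u^*$. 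But $\alpha_t(u)\xi u^*=\xi=v|\xi|$, so uniqueness of the polar decomposition gives $\alpha_t(u)vu^*=v$, i.e.\ $\alpha_t(u)v=vu$, for every $u\in\U(Q)$. Since every element of $Q$ is a finite linear combination of unitaries in $Q$, linearity yields $\alpha_t(x)v=vx$ for all $x\in Q$, so $v$ is the required nonzero partial isometry.

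The step I expect to be the only real obstacle is the nontriviality of $\xi$: once one has the minimal-norm vector in $\mathcal C$, everything is soft Hilbert-space geometry and bookkeeping with polar decompositions, whereas $\xi\neq 0$ rests precisely on the factor-of-two gap in the exponents recorded in Lemma~\ref{funky-lem}, which is what converts the uniform lower bound $\|E_M(\alpha_t(x))\|_2\geq\delta$ into a uniform lower bound on $\tau_{\tilde M}(\alpha_t(x)x^*)$.
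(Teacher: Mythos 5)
Your proof is correct and follows essentially the same route as the paper: the unique minimal-norm element of the closed convex hull of $\{\alpha_t(u)u^*\mid u\in\U(Q)\}$ is fixed by the isometries $\eta\mapsto\alpha_t(u)\eta u^*$, is nonzero thanks to Lemma~\ref{funky-lem}, and its polar part is the desired intertwiner. The only (harmless) cosmetic differences are that you verify nontriviality via the weakly continuous functional $\ip{\cdot}{1}$ rather than the paper's SOT/trace argument, and you invoke uniqueness of polar decomposition directly where the paper uses a $2\times 2$-matrix trick.
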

\begin{proof}
By Lemma \ref{funky-lem} we have $\tau(\alpha_t(x)x^*)\geq \delta$ for all $x\in \U(Q)$ and therefore $0$ is not in the set
\[
K:= \overline{\conv}^{\|\cdot\|_2}\{ \alpha_t(x)x^*\mid x\in \U(Q)\}. 
\]
(if it were, then a sequence in the non-closed convex hull would converge in 2-norm to zero and since the 2-norm topology and the SOT coincides on bounded sets, the sequence would go to zero in SOT; hence its trace would go to zero contradicting the estimate
$\tau(\alpha_t(x)x^*)\geq \delta>0$). We therefore get a unique non-zero vector $\eta \in K$ of minimal 2-norm \cite[Proposition 2.2.1]{KR1}.   Note that for $x\in \U(Q)$ the map\footnote{Recall that we write $Ju^*J\eta$ as $\eta u$ for notational convenience.} $\xi\mapsto \alpha_t(x)\xi x^*$ maps $K$ isometrically onto itself and by uniqueness of $\eta$ we therefore get
\[
\alpha_t(x)\eta x^* =\eta \quad \text{for all } x\in \U(Q)
\]
Now, passing to $2\times 2$-matrices a direct computation shows that 
\[
\left [ \begin{array}{cc} 0 & \eta^*\\ \eta  &0  \end{array}  \right ] \text{ commutes with }   \left [ \begin{array}{cc} x & 0\\ 0  &\alpha_t(x)   \end{array}  \right ] \text{ for every } x\in \U(Q).
\]
Hence $\left [ \begin{array}{cc} 0 & \eta^*\\ \eta  &0  \end{array}  \right ]$ is affiliated with the commutant of the von Neumann algebra 
\[
N:=\left\{\left [ \begin{array}{cc} x & 0\\ 0  &\alpha_t(x)   \end{array}  \right ] \ \Big{|} \   x\in \U(Q) \right\}''  
\]
and, in particular, the partial isometry arising from its polar decomposition is an element in $N'$. Polar decomposing $\eta$ as $\eta=v|\eta |$ it is not difficult to see that this partial isometry is nothing but $\left [ \begin{array}{cc} 0 & v^*\\ v  &0  \end{array}  \right ]$ and writing out the relation that it commutes with $N$ yields $\alpha_t(x)v=vx$ for every $x\in \U(Q)$ and thus for every $x\in Q$.
\end{proof}

\begin{lem}[{\cite{popa-malleable-actions-I}}]\label{lem-1}
Let $v_n\in M$ be a 2-norm bounded sequence and assume that the Fourier coefficients (relative to the decomposition $M=A\rtimes \Gamma$) of $v_n$ go to zero in 2-norm. Then for all $x\in \tilde{M}$ and $y\in \tilde{M}\ominus M$ we have $\|E_M(xv_ny)\|_2\To 0$.
\end{lem}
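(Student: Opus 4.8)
The plan is to reduce everything to elementary computations on the dense $*$-subalgebra $(D\htens A)\rtimes_{\alg}\Gamma$ and then use density/boundedness arguments to conclude. First I would observe that, by linearity in $x$ and $y$ together with Kaplansky's density theorem (controlling operator norms while approximating in $2$-norm, exactly as in the proof of Proposition \ref{fourier-prop}), it suffices to prove the estimate when $x=(d_{g_0}\tens a_{g_0})u_{g_0}$ is a single ``monomial'' in $\tilde M$ and $y=(d_{h_0}\tens a_{h_0})u_{h_0}$ is a single monomial lying in $\tilde M\ominus M$; the latter condition forces $d_{h_0}\in D\ominus \CC 1$, i.e.\ $\tau_D(d_{h_0})=0$, since $M\subseteq\tilde M$ is precisely the span of the monomials whose $D$-component is a scalar. (One has to be slightly careful that $\tilde M\ominus M$ is not an algebra, so the reduction is to finite sums of such monomials, but linearity takes care of that.)

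Next I would write $v_n=\sum_{g\in\Gamma}(1\tens (v_n)_g)u_g$ with $(v_n)_g\in A$, and compute $E_M\bigl(x v_n y\bigr)$ term by term. Multiplying out $x v_n y$ and pushing the $u_{g_0}$'s and $u_{h_0}$'s to the right using $u_g(d\tens a)=\sigma_g(d\tens a)u_g$, each summand becomes of the form $\bigl(d_{g_0}\,\pi(g_0)\!\cdot\!(\text{stuff in }D)\cdot\sigma_{g_0 g}(d_{h_0})\ \tens\ (\text{stuff in }A)\bigr)u_{g_0 g h_0}$; since on $M$-monomials the $D$-component of $v_n$ is the scalar $1$, the $D$-component of the $g$-th summand is $d_{g_0}\cdot\sigma_{g_0 g}(d_{h_0})$. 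Applying $E_M$ replaces this $D$-component by its trace $\tau_D\bigl(d_{g_0}\,\sigma_{g_0g}(d_{h_0})\bigr)$, and the key point is that this is a Fourier-type coefficient of the single element $d_{g_0}$ against the orbit $\{\sigma_k(d_{h_0})\}_{k}$: because $\Gamma\curvearrowright D$ is the Gaussian action and hence mixing (the representation $\pi$ is mixing and $\omega$ turns $\pi$ into the Koopman representation on $D\ominus\CC1$, using $\tau(\omega(\xi)\omega(\eta)^*)=e^{-\|\xi-\eta\|^2}\to\tau(\omega(\xi))\overline{\tau(\omega(\eta))}$ as the arguments drift apart), the coefficients $c_k:=\tau_D\bigl(d_{g_0}\,\sigma_k(d_{h_0})\bigr)$ satisfy $c_k\to 0$ as $k\to\infty$ in $\Gamma$, and in any case $\sum_k|c_k|^2<\infty$ by Bessel since $\{\sigma_k(d_{h_0})\}$ are (after centering) vectors in $L^2(D)\ominus\CC1$ indexed so that this is essentially an orthogonal-ish family — more safely, $|c_k|\le\|d_{g_0}\|_2\|d_{h_0}\|_2$ is a fixed bound and $c_k\to0$, which is all I need. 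Here the hypothesis $d_{h_0}\notin\CC1$ is exactly what makes the mixing kick in; if $d_{h_0}$ were scalar this coefficient would be constant and the conclusion would fail, which is why $y\in\tilde M\ominus M$ is essential.

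Putting this together, $\|E_M(xv_ny)\|_2^2=\bigl\|\sum_g \tau_D\bigl(d_{g_0}\sigma_{g_0g}(d_{h_0})\bigr)\,a_{g_0}\,\sigma_{g_0}\bigl((v_n)_g\bigr)\,\sigma_{g_0 g h_0}(a_{h_0})\,u_{g_0 g h_0}\bigr\|_2^2$, and since the $u_{g_0gh_0}$ are orthogonal in $L^2(M)$ as $g$ ranges over $\Gamma$, this equals $\sum_g |c_{g_0 g}|^2\,\|a_{g_0}\sigma_{g_0}((v_n)_g)\sigma_{\cdot}(a_{h_0})\|_2^2\le \|a_{g_0}\|_\infty^2\|a_{h_0}\|_\infty^2\sum_g|c_{g_0g}|^2\,\|(v_n)_g\|_2^2$. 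Now I would split the sum: fix $\varps>0$, choose a finite set $F\subseteq\Gamma$ with $|c_k|<\varps$ for $k\notin g_0 F$; on the tail, $\sum_{g\notin F}|c_{g_0g}|^2\|(v_n)_g\|_2^2\le\varps^2\sum_g\|(v_n)_g\|_2^2=\varps^2\|v_n\|_2^2\le\varps^2 C$ by the boundedness hypothesis, while on the finite set $F$, $\sum_{g\in F}|c_{g_0g}|^2\|(v_n)_g\|_2^2\le (\sup_k|c_k|^2)\sum_{g\in F}\|(v_n)_g\|_2^2\to0$ as $n\to\infty$ because each Fourier coefficient $(v_n)_g\to0$ in $2$-norm and $F$ is finite. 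Hence $\limsup_n\|E_M(xv_ny)\|_2^2\le C'\varps^2$ for every $\varps>0$, giving the claim for monomials, and then the density argument of the first paragraph upgrades it to all $x\in\tilde M$, $y\in\tilde M\ominus M$.

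The main obstacle I anticipate is the bookkeeping in the density step: because $\tilde M\ominus M$ is only a subspace (not a subalgebra), one must approximate $y$ in $2$-norm by finite sums of centered monomials $(d\tens a)u_h$ with $\tau_D(d)=0$ while keeping operator-norm control, and simultaneously control $x$ and use $\|v_n\|_2\le C$; making the three approximations compatible — and checking that the bound on $\|E_M(xv_ny)\|_2$ that emerges is uniform in $n$ — is the only genuinely fiddly part. The other input that needs care is the precise statement that the Gaussian action is mixing on $D\ominus\CC1$, but this follows cleanly from Lemma \ref{omega-formler} and the assumed mixing of $\pi$, by checking it on the total set $\{\omega(\xi)-\tau(\omega(\xi))1:\xi\in H\}$.
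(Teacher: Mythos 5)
Your proposal is correct and follows essentially the same route as the paper's proof: reduce to monomials via Kaplansky density, expand $v_n$ in its Fourier series, identify the scalar coefficient $\tau_D\bigl(d_{g_0}\,\sigma_{g_0 g}(d_{h_0})\bigr)$ produced by $E_M$, show it vanishes as $g\to\infty$, and split the resulting sum into a finite part (killed by hypothesis (b)) and a tail (killed by the $2$-norm bound). The only cosmetic difference is that the paper obtains the decay of these coefficients by the explicit Gaussian formula $\tau(\omega(\zeta))=e^{-\|\zeta\|^2}$ together with mixing of $\pi$, whereas you package the same computation as ``the Gaussian action is mixing on $D\ominus\CC 1$''.
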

If the sequence in question happens to be a sequence of unitaries, then one can think of the contents of Lemma \ref{lem-1} as follows: If the support of the Fourier decomposition of $v_n$ escapes every finite set in $\Gamma$ eventually, then  after squeezing $v_n$ by an element from $\tilde{M}$ and an element from $\tilde{M}\ominus M$ the resulting sequence is approaching $\tilde{M}\ominus M$. As the proof shows, Lemma \ref{lem-1} depends heavily on the representation $\pi\colon\Gamma \to O(H)$ being mixing.

\begin{proof}
Note that 
\[
\tilde{M}_0:= \spann_\CC\{(\omega(\xi) \tens a) u_g\mid \xi\in H, a\in A, g\in \Gamma  \}
\]
forms a dense $*$-subalgebra in $\tilde{M}$ with (see e.g.~Lemma \ref{omega-formler})
\[
(\id - E_M)(\tilde{M}_0)=\spann_{\CC}\{ ((\omega(\xi)-e^{-\|\xi\|^2}1 )\tens a)u_g \mid \xi\in H, a\in A,g \in \Gamma  \}.
\]
By Kaplansky's density theorem, it therefore suffices to consider $x$ and $y$ of the form $x= (\omega(\xi) \tens a) u_g$ and $y= ((\omega(\eta)-e^{-\|\eta\|^2}1 )\tens b)u_h$. We first re-write things slightly:
\begin{align*}
x &= (\omega(\xi) \tens a) u_g = (1\tens a) u_g \sigma_{g^{-1}}(\omega(\xi)\tens 1)= \underbrace{(1\tens a)u_g}_{\in M} (\omega(\underbrace{\pi_{g^{-1}}(\xi))}_{=:\xi'} \tens 1);\\
y&= ((\omega(\eta)-e^{-\|\eta\|^2}1 )\tens b)u_h =((\omega(\eta)-e^{-\|\eta\|^2}1 )\tens 1)\underbrace{(1\tens b)u_h}_{\in M}.
\end{align*}
Thus
\[
E_M(xv_ny) = (1\tens a)u_g E_M\left(\underbrace{(\omega(\xi')\tens 1  )v_n((\omega(\eta)-e^{-\|\eta\|^2}1)\tens 1)}_{=:z_n}\right) (1\tens b)u_h
\]
It therefore suffices to prove that $\|E_M(z_n)\|_2\to 0$. Expand $v_n\in M\subseteq \tilde{M}$ as $ v_n=\sum_{g}(1\tens (v_n)_g)u_g$. Then $z_n=\sum_g (d_g\tens (v_n)_g)u_g$ where 
\[
d_g:=\omega(\xi')(\omega(\pi_g(\eta))- e^{-\|\eta\|^2}1)=\omega(\xi'+ \pi_g(\eta))-e^{-\|\eta\|^2}\omega(\xi').
\]
 Thus
 \[
 \tau_D(d_g)=e^{-\|\xi'+\pi_g(\eta)\|^2 }- e^{-(\|\eta\|^2+\|\xi'\|^2)},
 \]
and since 
\[
\|\xi' +\pi_g(\eta)  \|_2^2=\|\xi'\|^2 +\|\eta\|^2 +\underbrace{2\Re\ip{\pi_g\xi'}{\eta}}_{\to 0 \text{ as } g\to \infty}
\]
(the convergence holds since $\pi$ is assumed to be mixing) we have $\tau_D(d_g)\to 0$ as $g\to \infty$. Since $E_M(z_n)=\sum_{g}\tau_D(d_g)(v_n)_gu_g$ we can, for given $\varps>0$, find a finite set $F\subseteq \Gamma$ (independent of $n$) such that for all $g\notin F$ we have $|\tau(d_g)|<\varps$. Thus
\begin{align*}
\|E_M(z_n)\|_2^2 &= \sum_g |\tau(d_g)|^2 \|(v_n)_g\|_2^2\\
&\leq \sum_{g\in F} |\tau(d_g)|^2 \|(v_n)_g\|_2^2 + \sum_{g\in \Gamma\setminus F}\varps^2 \|(v_n)_g\|_2^2\\
&\leq \sum_{g\in F} |\tau(d_g)|\|(v_n)_g\|_2^2 + \varps^2 \sup_n\|v_n\|_2^2. 
\end{align*}
Since $F$ is finite and independent of $n$ and we assume that $\lim_n\|(v_n)_g\|_2=0 $ for all $g\in \Gamma$ this implies that $\|E_M(z_n)\|_2\to 0$ as desired.

\end{proof}
\begin{defi}
For $F\subset \Gamma$ we denote by $P_F$ the orthogonal projection onto the space $\overline{\spann}^{\|\cdot\|_2}\{Au_g\mid g\in F\}\subseteq L^2(M)\subseteq L^2(\tilde{M}) $.
\end{defi}
Note that the Fourier coefficients of a sequence $v_n\in M$ go to zero in 2-norm iff $\|P_F(v_n)\|_2\to 0$ for all $F\subset \Gamma$ finite.
We will also need the following  ``uniform version'' of Lemma \ref{lem-1}.
\begin{cor}\label{cor-3}
Let $v_n\in \U(M)$ with Fourier coefficients going to zero in 2-norm and let $F\subset \Gamma$ be finite. Then for all $y\in \tilde{M}\ominus M$ we have 
\[
\sup_{x\in (M)_1}\|E_M(\alpha_t(P_F(x))v_n y)\|_2\to 0.
\]
\end{cor}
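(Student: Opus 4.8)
The plan is to reduce Corollary \ref{cor-3} to Lemma \ref{lem-1} by a finite-rank approximation argument combined with a compactness trick that makes the estimate uniform over the unit ball $(M)_1$. First I would fix $t\in\RR$, the finite set $F\subset\Gamma$, and $y\in\tilde M\ominus M$, and note that since $P_F$ has finite-dimensional range over $A$ — more precisely $P_F(x)=\sum_{g\in F}(1\tens E_A(xu_g^*))u_g$ — an element $P_F(x)$ with $x\in(M)_1$ is determined by the tuple $(E_A(xu_g^*))_{g\in F}$ of elements of $(A)_1$. Applying $\alpha_t$ gives $\alpha_t(P_F(x))=\sum_{g\in F}(\omega(tb(g))\tens E_A(xu_g^*))u_g$, so the whole family $\{\alpha_t(P_F(x)):x\in(M)_1\}$ lives in the finitely generated left/right $A$-module spanned by the fixed vectors $(\omega(tb(g))\tens 1)u_g$, $g\in F$, with coefficients ranging over $(A)_1$.

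The key step is then to handle each of the finitely many terms separately and uniformly. Writing out $E_M(\alpha_t(P_F(x))v_n y)$ and using that $E_M$ is an $M$-bimodule map, for each fixed $g\in F$ one has a contribution of the form $(1\tens a_g)u_g\,E_M\big((\omega(t\pi_{g^{-1}}b(g))\tens 1)v_n y\big)$ with $a_g=E_A(xu_g^*)\in(A)_1$; here I have pulled the $A$-coefficient and the group unitary out to the left exactly as in the proof of Lemma \ref{lem-1}. Since $\|(1\tens a_g)u_g\|_\infty\le 1$, we get
\[
\sup_{x\in(M)_1}\|E_M(\alpha_t(P_F(x))v_ny)\|_2\le\sum_{g\in F}\big\|E_M\big((\omega(t\pi_{g^{-1}}b(g))\tens 1)\,v_n\,y\big)\big\|_2.
\]
Now the right-hand side no longer depends on $x$ at all: it is a finite sum of terms to which Lemma \ref{lem-1} applies directly, with $x$-argument replaced by the fixed element $(\omega(t\pi_{g^{-1}}b(g))\tens 1)\in\tilde M$ and the same $y\in\tilde M\ominus M$ and the same sequence $v_n$ whose Fourier coefficients tend to $0$ in $2$-norm. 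Hence each summand tends to $0$, and the finite sum tends to $0$, which is the claim.

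The only point that needs a little care — and the step I expect to be the main obstacle — is the interchange that lets one pull the $A$-valued coefficient $a_g=E_A(xu_g^*)$ out of $E_M$ to the \emph{left} while keeping a uniform bound: one must check that $\alpha_t(P_F(x))=\sum_{g\in F}(1\tens a_g)u_g\cdot(\omega(t\pi_{g^{-1}}b(g))\tens 1)$, i.e.\ that $\alpha_t$ and $P_F$ genuinely factor this way, using $\alpha_t|_{D\htens A}=\id$, the covariance relation $u_g(\omega(\xi)\tens 1)u_g^*=\omega(\pi_g\xi)\tens 1$, and $\alpha_t(u_g)=(\omega(tb(g))\tens 1)u_g$. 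Once this algebraic identity is in place, the triangle inequality together with $\|(1\tens a_g)u_g\|_\infty\le 1$ and the $M$-bimodularity of $E_M$ gives the displayed uniform bound mechanically, and Lemma \ref{lem-1} finishes the proof. (Strictly, Lemma \ref{lem-1} is stated for $2$-norm bounded sequences, which the unitaries $v_n$ certainly are, so no extra hypothesis is needed.)
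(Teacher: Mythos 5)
Your proof is correct and is essentially the paper's own argument: write $\alpha_t(P_F(x))$ as a sum over $g\in F$ of an $x$-dependent contraction in $M$ times a fixed element of $\tilde M$, pull the contraction out of $E_M$ by $M$-bimodularity, bound its operator norm by $1$, and apply Lemma \ref{lem-1} to each of the finitely many $x$-independent remaining terms. The only cosmetic difference is that the paper keeps the group unitary with the fixed element, setting $z_g=(\omega(tb(g))\tens 1)u_g$ and pulling out $(1\tens x_g)$, whereas you pull out $(1\tens a_g)u_g$ and keep $(\omega(t\pi_{g^{-1}}b(g))\tens 1)$; both decompositions are valid and lead to the same estimate.
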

\begin{proof}
Let $x\in (M)_1$ be given and expand it as $x=\sum_{g\in \Gamma}(1\tens x_g)u_g\in M\subset\tilde{M}$. Then 
\[
\alpha_t(P_F(x))=\alpha_t\left(\sum_{g\in F}(1\tens x_g)u_g  \right)=\sum_{g\in F}(\omega(tb(g))\tens x_g )u_g= \sum_{g\in F}(1\tens x_g)\underbrace{(\omega(tb(g))\tens 1 )u_g}_{=:z_g}.
\]
(note that the $z_g$'s are independent of $x$; they only depend on $F$). Thus
\begin{align*}
\|E_M\left(\alpha_t(P_Fx)v_n y   \right)\|_2 &=\|\sum_{g\in F} E_M(  (1\tens x_g) z_g v_n y  )\|_2\\
&=\| \sum_{g\in F}  (1\tens x_g)E_M(z_g v_n y) \|_2\\
&\leq \sum_{g\in F} \|x_g\|_{\infty} \|E_M(z_g v_n y)\|_2\\
&\leq \sum_{g\in F}  \|E_M(z_g v_n y)\|_2,
\end{align*}
where the last inequality follows from $\|x_g\|_{\infty}=\|E_M (xu_g^*)\|_\infty\leq 1$  ($E_M$ is u.c.p.~and thus in particular contractive). The last sum in the above calculation is independent of $x$ and converges to zero by Lemma \ref{lem-1}, and hence the uniform convergence follows.

\end{proof}

The next result is a souped  up version of Lemma \ref{lem-1} (when $\delta=0$ the two statements coincide).

\begin{lem}\label{lem-2}
Let $v_n\in M$ be a sequence of unitaries and let $\delta\geq 0$. Assume that  for all finite  $F\subset \Gamma$ we have $\limsup_n\|P_F(v_n)\|_2\leq \delta$. Then for all $x\in (\tilde{M})_1$ and $y\in (\tilde{M}\ominus M)_1$ we have $\limsup_n \|E_M(xv_ny)\|_2\leq \delta$.
\end{lem}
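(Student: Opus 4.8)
The plan is to reduce Lemma \ref{lem-2} to Lemma \ref{lem-1} by splitting each $v_n$ into a ``low-frequency'' part supported on a finite set $F\subset\Gamma$ and a ``high-frequency'' remainder whose Fourier coefficients escape to infinity, in a way that is uniform enough to control the $\limsup$. First I would fix $\varps>0$ and $x\in(\tilde M)_1$, $y\in(\tilde M\ominus M)_1$. Since the hypothesis is about \emph{finite} sets, but $v_n$ is a single unitary, a naive finite truncation $P_F(v_n)$ does not by itself escape to infinity in $n$; the trick is instead to extract a subsequence. So I would argue by contradiction: suppose $\limsup_n\|E_M(xv_ny)\|_2>\delta+\varps$ for some $\varps>0$, pass to a subsequence (not relabelled) along which $\|E_M(xv_ny)\|_2>\delta+\varps$ for all $n$.

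Next I would build a nested exhaustion $F_1\subset F_2\subset\cdots$ of $\Gamma$ by finite sets with $\bigcup_k F_k=\Gamma$. Using $\limsup_n\|P_{F_k}(v_n)\|_2\le\delta$ for each fixed $k$, together with a diagonal argument, I would pass to a further subsequence of $v_n$ and choose an increasing sequence of indices so that $\|P_{F_k}(v_n)\|_2\le\delta+\varps$ whenever $n$ is large relative to $k$. The point is to write $v_n=P_{F_{k(n)}}(v_n)+w_n$ where $k(n)\to\infty$ slowly enough that (i) $\|P_{F_{k(n)}}(v_n)\|_2\le\delta+2\varps$ eventually, and (ii) the remainder $w_n=(1-P_{F_{k(n)}})(v_n)$ has the property that for every \emph{fixed} finite $F\subset\Gamma$, $\|P_F(w_n)\|_2\to 0$ (because $F\subseteq F_{k(n)}$ for $n$ large, so $P_F(w_n)=0$ eventually). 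Thus $w_n$ satisfies the hypothesis of Lemma \ref{lem-1}: its Fourier coefficients go to zero in $2$-norm. (Here $w_n$ need no longer be a unitary, but Lemma \ref{lem-1} only requires a $2$-norm bounded sequence, and $\|w_n\|_2\le\|v_n\|_2=1$.)

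Now decompose $E_M(xv_ny)=E_M(xP_{F_{k(n)}}(v_n)y)+E_M(xw_ny)$. By Lemma \ref{lem-1} applied to $w_n$ (with the \emph{fixed} $x\in\tilde M$ and $y\in\tilde M\ominus M$), the second term has $\|E_M(xw_ny)\|_2\to 0$. For the first term, $E_M$ is a contraction for $\|\cdot\|_2$, $x$ and $y$ have operator norm $\le 1$, so $\|E_M(xP_{F_{k(n)}}(v_n)y)\|_2\le\|P_{F_{k(n)}}(v_n)\|_2\le\delta+2\varps$ eventually. Combining, $\limsup_n\|E_M(xv_ny)\|_2\le\delta+2\varps$, contradicting the choice of the subsequence once $\varps$ is taken small enough; hence $\limsup_n\|E_M(xv_ny)\|_2\le\delta$ as claimed. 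Finally, by Kaplansky density (as in the proof of Lemma \ref{lem-1}) it suffices to have treated $x$ and $y$ in the dense $*$-subalgebras, so the statement extends to all $x\in(\tilde M)_1$, $y\in(\tilde M\ominus M)_1$.

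The main obstacle I anticipate is the bookkeeping in the diagonal/subsequence argument: one must choose the cutoff sets $F_{k(n)}$ so that simultaneously the truncation stays $2$-norm small (controlled by the $\limsup$ hypothesis, which only gives control for each \emph{fixed} $F$ separately) and the tail $w_n$ genuinely has vanishing Fourier coefficients for each fixed $g$. The subtlety is that the hypothesis $\limsup_n\|P_F(v_n)\|_2\le\delta$ for each finite $F$ does not immediately give a single rate valid for all $F$ at once, which is precisely why passing to a subsequence (harmless, since the conclusion is a $\limsup$ statement and a contradiction argument lets us work along a bad subsequence) is the right move. Everything else — the contractivity of $E_M$, the norm bounds on $x,y$, and the invocation of Lemma \ref{lem-1} — is routine.
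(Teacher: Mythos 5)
Your proof is correct and is essentially the paper's argument: the same contradiction setup along a bad subsequence, the same decomposition $v_n = P_{F}(v_n) + w_n$ with an exhaustion $F_k\nearrow\Gamma$ and a diagonal choice making the tail $w_n$ satisfy the hypotheses of Lemma \ref{lem-1}, and the same final estimate $\delta+2\varps$ plus arbitrariness of $\varps$. The only cosmetic difference is that you index the cutoff as $k(n)\to\infty$ slowly while the paper selects $n(k)$ for each $k$; these are the same diagonalization, and your closing appeal to Kaplansky density is unnecessary since the estimates already hold for arbitrary $x\in(\tilde M)_1$, $y\in(\tilde M\ominus M)_1$.
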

\begin{proof}
Assume this is not the case; then there exists an $\omega_0>0$ and a subsequence $v_{l(n)}$ such that $\|E_M(x v_{l(n)}y)\|_2\geq \delta + \omega_0$ for all $n$. Since the limes superior of a subsequence is always bounded by that of the original sequence, the subsequence $v_{l(n)}$ still satisfies the assumptions in the lemma and we may therefore without loss of generality assume that $l(n)=n$. Let $F_k\subset \Gamma$ be an increasing sequence of finite subsets with union $\Gamma$ and let $\varps>0$ be given.
The assumption gives that $\limsup_n \|P_{F_k}(v_n)\|_2\leq \delta$ for each $k\in \NN$, so in particular we can find an index $n(k)$ such that $\|P_{F_k}(v_{n(k)})\|_2\leq \delta +\varps$. Consider now $w_{k}:= v_{n(k)}-P_{F_k}(v_{n(k)})$. Since any finite set $F\subset \Gamma$ is eventually contained in $F_k$'s we get that $\|P_F(w_{k})\|_2\to_k 0$ and hence the Fourier coefficients of the sequence $w_{k}$ go to zero. Applying Lemma \ref{lem-1}, we therefore get  $\|E_M(x w_{k}y)\|_2\to_k 0$; in particular there exists $k_0\in \NN$ such that  $\|E_M(xw_ky)\|_2\leq \varps $ when $k\geq k_0$.  For $k\geq k_0$ we  thus have
\begin{align}
\|E_M(xv_{n(k)} y)\|_2 &\leq \| E_M(x w_k y)  \|_2 + \| E_M(xP_{F_k}(v_{n(k)}) y) \|_2 \notag \\
&\leq \varps + \| xP_{F_k}(v_{n(k)}) y  \|_2 \tag{Schwarz inequality \cite[1.5.7]{brown-ozawa}} \\
&\leq \varps + \| P_{F_k}(v_{n(k)}) \|_2\tag{$x,y\in (\tilde{M})_1$}\\
&\leq \delta + 2\varps.\notag
\end{align}
On the other hand, since $v_{n(k)}$ is still an element in the original sequence we also have $\|E_M(xv_{n(k)} y)\|_2\geq \delta +\omega_0$ and since $\omega_0$ is fixed and $\varps$ was arbitrary this yields the desired contradiction.
\end{proof}
%brown-ozawa prop 1.5.7.
Dear reader, thank you for staying with us  through these technical pages. The reward is waiting in the following sections.

\section{Transferring the rigidity}\label{transfer-section}
In this section we consider again the setup from our main theorem: Thus $M=A\rtimes \Gamma$ where $\Gamma$ satisfies the condition in Theorem \ref{chifan-peterson-thm} and $(u_g)_{g\in \Gamma}$ denote the natural unitaries implementing the $\Gamma$-action. Assume furthermore that $M$ can also be written as $B\rtimes \Lambda$ for another group $\Lambda$ and another abelian von Neumann algebra $B$ and denote by $(v_h)_{h\in \Lambda}$ the natural unitaries implementing the $\Lambda$-action. We aim at proving the following \emph{transfer of rigidity} result.

\begin{thm}[Popa--Vaes, \cite{popa-vaes-grp-measure-space-decomp}]\label{transfer-thm}
For all $\delta>0$ there exists a sequence $h_n\in \Lambda$ and a $t>0$ such that $v_n:= v_{h_n}$ satisfy:
\begin{itemize}
\item[(1)] $ \|\alpha_t(v_n)-v_n\|_2\leq \delta  $ for all $n\in \NN$.
\item[(2)] The Fourier coefficients of $v_n$ (relative to the decomposition $M=A\rtimes \Gamma$) go to zero in 2-norm.
\end{itemize}
\end{thm}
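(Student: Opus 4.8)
The plan is to combine Popa's intertwining technique (Theorem \ref{fundamental-thm}) with the rigidity coming from property $\T$. Throughout, put $N:=L^\infty(X)\rtimes\Gamma_0\subseteq M$ -- a genuine von Neumann subalgebra, $\Gamma_0$ being a subgroup of $\Gamma$. \emph{Step 1 (uniform rigidity of $\alpha_t$ on $N$):} since $\Gamma_0$ has property $\T$ and the cocycle $b$ is unbounded, hence non-inner, the Delorme--Guichardet theorem forces $b|_{\Gamma_0}$ to be inner and in particular bounded, say $\|b(g)\|\le R$ for all $g\in\Gamma_0$ and some $R<\infty$. Running the computation from the proof of Proposition \ref{monotone-prop} for $x=\sum_{g\in\Gamma_0}(1\tens x_g)u_g\in N$ gives
\[
\|\alpha_t(x)-x\|_2^2=\sum_{g\in\Gamma_0}2\bigl(1-e^{-t^2\|b(g)\|^2}\bigr)\|x_g\|_2^2\ \le\ 2\bigl(1-e^{-t^2R^2}\bigr)\,\|x\|_2^2 .
\]
So, choosing any sufficiently small $t>0$ (namely with $2(1-e^{-t^2R^2})\le(\delta/3)^2$) one gets $\|\alpha_t(x)-x\|_2\le\delta/3$ for every $x$ in the unit ball of $N$; shrinking $t$ only improves this, so this is the $t$ that will appear in the statement.

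\emph{Step 2 ($N\not\prec_M B$, and a first sequence):} as $\Gamma_0$ is infinite and has property $\T$ it is non-amenable, so the group von Neumann algebra of $\Gamma_0$, which $N$ contains, is non-amenable; since $\Gamma_0\curvearrowright X$ is free, every corner $zN$ with $z\in Z(N)$ a projection is itself of the form $L^\infty(X_0)\rtimes\Gamma_0$ and again contains a copy of that algebra, whence $N$ has no amenable direct summand. On the other hand $B$ is abelian, hence amenable, so $N\not\prec_M B$: an embedding of a corner $pNp$ into a corner $qBq$ as in Theorem \ref{fundamental-thm}(ii) would realise $pNp$ as a subalgebra of the amenable algebra $qBq$, which is impossible. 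Applying Popa's criterion, Theorem \ref{fundamental-thm}(iii), to the decomposition $M=B\rtimes\Lambda$ together with Proposition \ref{fourier-prop}, we obtain a sequence $w_n\in\U(N)$ whose Fourier coefficients relative to $M=B\rtimes\Lambda$ tend to $0$ in $2$-norm. By Step 1 these same unitaries satisfy $\|\alpha_t(w_n)-w_n\|_2\le\delta/3$ for every $n$; so we already have a sequence in $\U(M)$ that is uniformly almost fixed by $\alpha_t$ and whose $\Lambda$-support escapes every finite subset of $\Lambda$. What remains is to replace the auxiliary unitaries $w_n$ by single group unitaries $v_{h_n}$, $h_n\in\Lambda$, converting the escape of the $\Lambda$-support into the vanishing of the $\Gamma$-Fourier coefficients demanded in (2).

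\emph{Step 3 (the transfer; the main obstacle):} writing $w_n=\sum_{h\in\Lambda}(w_n)_h v_h$ with $(w_n)_h\in B$ and $\sum_h\|(w_n)_h\|_2^2=1$, the point to be extracted is that the uniform near-invariance of the superpositions $w_n$, together with the escape of their $\Lambda$-support, forces the existence of infinitely many individual group unitaries $v_h$ that are simultaneously $\alpha_t$-almost fixed and ``$\Gamma$-spread out''; a diagonalisation over an exhaustion $F_1\subseteq F_2\subseteq\cdots$ of $\Gamma$ by finite sets, using the projections $P_{F_k}$ (and the fact that distinct group unitaries are pairwise orthogonal), then yields the sequence $h_n$. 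I would carry this out by contradiction: if no sequence as in the theorem existed for our $t$ and $\delta$, then a diagonal argument produces a finite $F_0\subseteq\Gamma$ and an $\varepsilon_0>0$ such that $\{\,h\in\Lambda:\|\alpha_t(v_h)-v_h\|_2\le\delta\ \text{and}\ \|P_{F_0}(v_h)\|_2\ge\varepsilon_0\,\}$ is infinite, i.e. the $\alpha_t$-rigid part of the $\Lambda$-grading is ``almost supported'' on the finite-$A$-dimensional subspace $\overline{\spann\{Au_g:g\in F_0\}}$; feeding this back -- via Corollary \ref{implement-cor} to manufacture a partial isometry implementing $\alpha_t$ on $N$, and Lemmas \ref{lem-1}--\ref{lem-2} (this is where the mixing of $\pi$ is used decisively, to kill the off-diagonal error terms) -- one contradicts the combination of Steps 1 and 2, e.g. by forcing a corner of $N$ into an amenable algebra. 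I expect this step to be the crux of the whole theorem: the deformation $\alpha_t$ is diagonal in the $\Gamma$-Fourier picture, with multiplier $g\mapsto e^{-t^2\|b(g)\|^2}$, and interacts poorly with the $\Lambda$-grading in which the $w_n$ are spread out, so transferring near-invariance from the superposition $w_n$ down to single group unitaries $v_h$ while simultaneously controlling their $\Gamma$-Fourier coefficients is exactly where the real work lies.
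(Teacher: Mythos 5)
There is a genuine gap, and it sits exactly where you locate it: Step 3. The difficulty is that neither $\alpha_t$ nor $P_F$ is diagonal with respect to the $\Lambda$-grading of $M=B\rtimes\Lambda$, so near-invariance of a superposition $w_n=\sum_h(w_n)_hv_h$ under $\alpha_t$ gives no control over the individual $\|\alpha_t(v_h)-v_h\|_2$ (the summands $\alpha_t((w_n)_hv_h)-(w_n)_hv_h$ are not pairwise orthogonal), and the escape of the $\Lambda$-support of $w_n$ says nothing about the $\Gamma$-Fourier coefficients of single $v_h$'s, which live in a completely different grading. Your proposed repair via Corollary \ref{implement-cor} and Lemmas \ref{lem-1}--\ref{lem-2} does not assemble into an argument: those results produce a partial isometry intertwining $\alpha_t$ with the identity on a subalgebra, which is the business of Section \ref{implementation-section}, not a mechanism for isolating individual group unitaries $v_h$.

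The missing device is the comultiplication $\Delta\colon M\to M\htens M$, $\Delta(bv_h)=bv_h\tens v_h$, applied to $Q:=L\Gamma_0$. Because the first legs $b_hv_h$ of $\Delta(u)=\sum_hb_hv_h\tens v_h$ are pairwise orthogonal, one gets the exact identities
\[
\|(\id\tens\alpha_t)\Delta(u)-\Delta(u)\|_2^2=\sum_h\|b_h\|_2^2\,\|\alpha_t(v_h)-v_h\|_2^2,\qquad \|(\id\tens P_F)\Delta(u)\|_2^2=\sum_h\|b_h\|_2^2\,\|P_F(v_h)\|_2^2,
\]
so both quantities you need to control become diagonal weights against the probability vector $(\|b_h\|_2^2)_h$, and a pigeonhole argument produces an $h$ lying in both $S=\{h:\|\alpha_t(v_h)-v_h\|_2\le\delta\}$ and $T_F=\{h:\|P_F(v_h)\|_2<\varps\}$. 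Two further points where your inputs are the wrong ones: (i) the $t$ must be chosen so that $\id\tens\alpha_t$ almost fixes $\Delta(\U(Q))$ uniformly, which is supplied by property $\T$ rigidity of $\Delta(Q)\subset M\htens\tilde M$ --- your boundedness of $b|_{\Gamma_0}$ only makes $\alpha_t$ small on $N$ itself, which is irrelevant to $\alpha_t(v_h)$; (ii) the smallness of $\|(\id\tens P_F)\Delta(u_n)\|_2$ comes from the non-embedding $\Delta(Q)\nprec_{M\htens M}M\htens A$ (Lemma \ref{non-embedding-lem}, itself a two-step argument using $\T$-rigidity against $\id\tens E_{A_n}$ and the abelianness of $B$), not from $N\nprec_M B$, which controls the $B$-valued $\Lambda$-coefficients of elements of $N$ rather than the $A$-valued $\Gamma$-coefficients of the $v_h$.
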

The sequence $v_n$ can be thought of as ``the shadow'' of the property (T) subgroup $\Gamma_0\leq \Gamma$ in the decomposition arising from $\Lambda$. Define $\Delta\colon M\to M\htens M$ by
$
\Delta(bv_h)=bv_h\tens v_h
$
and put $Q:= L\Gamma_0\subseteq M$. The following lemma shows that the comultiplication\footnote{See any text on quantum groups or Hopf algebras for an explanation of this terminology.} $\Delta$ distributes the rigidity of $Q$ on both legs of the tensor products, so that we cannot virtually conjugate $\Delta(Q)$ into $M\tens A$ (the $A$-part is abelian and hence nothing rigid can be embedded into it).

\begin{lem}\label{non-embedding-lem}
$\Delta(Q)\nprec_{M\htens M} M\htens A$.
\end{lem}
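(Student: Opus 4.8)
The plan is to produce, via Popa's criterion (Theorem~\ref{fundamental-thm}(iii)), an explicit sequence of unitaries in $\Delta(Q)$ that witnesses $\Delta(Q)\nprec_{M\htens M}M\htens A$. The key preliminary observation is that $Q=L\Gamma_0$ cannot be virtually conjugated into the \emph{abelian} algebra $B$. Indeed, since $\Gamma_0$ is an infinite property~$\T$ group, $L\Gamma_0$ is a direct sum of $\twoone$-factors and in particular has no nonzero abelian corner; on the other hand $Q\prec_M B$ would, by Theorem~\ref{fundamental-thm}(ii) (after cutting the projection $p$ with a central projection so as to make the intertwining homomorphism $\theta$ injective, as in the remark following that theorem), yield a nonzero projection $p\in Q$ and an injective normal unital $*$-homomorphism $\theta\colon pQp\to qBq$, forcing $pQp$ to be abelian — a contradiction. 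So $Q\nprec_M B$, and combining Popa's criterion with Proposition~\ref{fourier-prop} applied to the decomposition $M=B\rtimes\Lambda$ we obtain a sequence $w_n\in\U(Q)$ whose $\Lambda$-Fourier coefficients $(w_n)_k:=E_B(w_nv_k^*)$ tend to $0$ in $\|\cdot\|_2$ for every $k\in\Lambda$.

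I would then check that $\Delta(w_n)\in\U(\Delta(Q))$ satisfies $\|E_{M\htens A}(X\,\Delta(w_n)\,Y)\|_2\to 0$ for all $X,Y\in M\htens M$, which by Theorem~\ref{fundamental-thm}(iii) is precisely $\Delta(Q)\nprec_{M\htens M}M\htens A$. Since $\|(X-X')\Delta(w_n)Y\|_2\le\|X-X'\|_2$ for $X,X',Y$ in the unit ball, a Kaplansky density argument reduces this to $X=b_1v_{h_1}\otimes b_2v_{h_2}$ and $Y=b_3v_{h_3}\otimes b_4v_{h_4}$ with $b_i\in\U(B)$. Writing $\Delta(w_n)=\sum_{k\in\Lambda}(w_n)_kv_k\otimes v_k$ and expanding the product, the first legs of the $k$-th summands of $X\,\Delta(w_n)\,Y$ are $B$-multiples of the pairwise distinct $v_{h_1kh_3}$, so all cross terms in $\|\cdot\|_2^2$ drop out and one is left with
\[
\|E_{M\htens A}(X\,\Delta(w_n)\,Y)\|_2^2=\sum_{k\in\Lambda}\|(w_n)_k\|_2^2\,\bigl\|E_A\bigl(b_2\,v_{h_2kh_4}\,b_4'\bigr)\bigr\|_2^2,
\]
where $b_4'\in\U(B)$ depends only on $b_4$ and $h_4$. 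Granting the auxiliary fact that $\|E_A(bv_lb')\|_2\to 0$ as $l\to\infty$ in $\Lambda$ for all $b,b'\in B$, one fixes $\varepsilon>0$, picks a finite $F\subset\Lambda$ outside of which the second factor above is $<\varepsilon$ (using that $h_2kh_4\to\infty$ as $k\to\infty$), bounds the displayed sum by $\sum_{k\in F}\|(w_n)_k\|_2^2+\varepsilon^2$, and lets $n\to\infty$: since $F$ is finite and each $\|(w_n)_k\|_2\to0$, the bound is eventually $<2\varepsilon$. This gives the claim, and the lemma follows.

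The step I expect to be the genuine obstacle is the auxiliary fact $\|E_A(bv_lb')\|_2\to0$ as $l\to\infty$, i.e.\ that the canonical unitaries of one group-measure-space decomposition of $M$ are asymptotically orthogonal to the Cartan subalgebra of the other. This has nothing to do with property~$\T$ but rests on $A$ being a \emph{maximal abelian} subalgebra of $M$; a natural route is once more Popa's criterion together with an $\ell^2$-summability estimate for Fourier coefficients carried out in the $A\rtimes\Gamma$ picture, and it is here that the group-measure-space form of \emph{both} Cartan subalgebras is really used (see \cite{popa-vaes-grp-measure-space-decomp}). The remaining ingredients — the abelian-corner obstruction for $Q$, the Kaplansky reduction, and the cancellation of the cross terms — are routine.
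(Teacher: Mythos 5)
Your overall plan --- derive $Q\nprec_M B$ from an abelian/rigidity obstruction, push a witnessing sequence of unitaries through $\Delta$, and verify Popa's criterion by an orthogonality computation --- resembles Step~2 of the paper's proof, but the step you flag as ``the genuine obstacle'' is in fact a genuine gap, and your guess that it ``has nothing to do with property~$\T$'' is exactly backwards. The auxiliary fact $\|E_A(bv_lb')\|_2\to0$ as $l\to\infty$ in $\Lambda$ is \emph{false} for general pairs of group-measure-space Cartan subalgebras: in the hyperfinite $\twoone$-factor $R=\bigotimes_n M_2(\CC)$ take $A=\bigotimes_n D_2$ with $R=A\rtimes\bigoplus_n\ZZ/2\ZZ$ (coordinate flips, implemented by tensor products of the symmetry $\left(\begin{smallmatrix}0&1\\1&0\end{smallmatrix}\right)$), and take $B=\bigotimes_n uD_2u^*$ with $u$ the Hadamard unitary; this is again a group-measure-space Cartan subalgebra for $\Lambda=\bigoplus_n\ZZ/2\ZZ$, but its canonical unitaries $v_l$ are tensor products of $\diag(1,-1)$, which lie in $A$, so $\|E_A(v_l)\|_2=1$ for every $l$. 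Maximal abelianness of $A$ gives no control on how the unitaries of a second decomposition sit relative to $A$, and nothing in your argument invokes the standing hypotheses on $\Gamma$ at this point, so the missing step is not a routine verification. (A smaller slip: $L\Gamma_0$ need not be a direct sum of $\twoone$-factors, e.g.\ if $\Gamma_0$ has infinite center; the correct obstruction, used in the paper, is that a diffuse property~$\T$ algebra cannot embed into an abelian one.)

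The paper's proof is organized precisely so as never to need decay of $E_A(bv_lb')$. It argues by contradiction in two steps. Step~1 uses property~$\T$ of $\Delta(Q)$ together with hyperfiniteness of $A$ --- writing $A=(\bigcup_nA_n)''$ with $A_n$ finite dimensional and applying the deformation $\id\tens E_{A_n}$ of $M\htens A$, which converges uniformly on any property~$\T$ corner --- to upgrade the assumed embedding $\Delta(Q)\prec_{M\htens M}M\htens A$ to $\Delta(Q)\prec_{M\htens M}M\tens1$. So property~$\T$ enters exactly at the point where you hoped to avoid it. Step~2 then runs essentially your orthogonality computation, but against $E_{M\tens1}$ rather than $E_{M\htens A}$: the second-leg factor becomes the scalar $\tau_M(x_2v_hy_2)$ instead of the operator $E_A(b_2v_{h_2kh_4}b_4')$, and for $x_2,y_2\in B\rtimes_\alg\Lambda$ with finite Fourier support this scalar vanishes outside a finite set of $h$ that is independent of $n$. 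That finiteness is what replaces the decay estimate you cannot prove. To salvage your direct approach you would first have to establish the reduction to $M\tens1$, at which point you have reproduced the paper's argument.
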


\begin{proof}
Assume that $\Delta(Q)\prec_{M\htens M} M\htens A$; i.e.~that a corner $\Delta(qQq)$ actually embeds into a corner of $M\htens A$. Denote by $\theta\colon \Delta(qQq)\to M\htens A $ the embedding. The proof proceeds in two steps.\\

\noindent \textbf{Step 1:} We first show that the assumptions actually actually imply $\Delta(Q)\prec M\tens 1$. Since $A$ is abelian it is hyperfinite so we may write it as $A=(\cup_n A_n)''$ for an increasing family of finite dimensional algebras. Then the maps
\[
\xymatrix{\Phi_n\colon M\htens A  \ar[rr]^{\id \tens E_{A_n}}& &M\tens A_n \ar@{^{(}->}[r] & M\htens A}
\]
are easily seen to constitute a deformation of $M\tens A$; i.e.~the $\Phi_n$'s are u.c.p.~and converge pointwise in 2-norm to the identity. Since $\Gamma_0$ has property (T) the von Neumann algebra $\Delta(Q)$ has property (T) and is therefore rigid in any 
ambient von Neumann algebra\footnote{meaning that any deformation of the ambient von Neumann algebra  converges uniformly on $\Delta(Q)$.}. Since property (T) is easily  seen to pass to corners the same is true for $P:=\theta(\Delta(qQq))\subset M\htens A$ and therefore the deformation $\Phi_n$ converges uniformly on $P$; i.e.~
\[
\sup_{y\in(P)_1}\|\Phi_n(y)-y  \|_2\To 0.
\]
From this we now obtain that $P\prec_{M\htens A} M\tens A_n$ for some $n\in \NN$: because if this is not the case, then there exist unitaries $w_k\in P$ such that $\|E_{M\tens A_n}(xw_ky)\|_2\to_k 0$ for all $x,y\in M\htens A$, but $\Phi_n$ equals $E_{M\tens A_n}$ so by the uniform convergence we get
\[
1=\|w_k\|_2 \sim \|\Phi_n(w_k)\|_2=\|E_{M\tens A_n}(w_k)\|\sim 0 
\]  
which is a contradiction. Thus $P\prec_{M\htens A} M\tens A_n$ for some $n\in \NN$. Since $A_n$ is finite dimensional, $M\tens A_n\prec_{M\htens M} M\tens 1$ and the claim (i.e.~that $\Delta(Q)\prec_{M\htens M} M\tens 1$) follows.\\

\noindent \textbf{Step 2:} Next we show how the conclusion of Step 1 leads to a contradiction by showing that $\Delta(Q)\prec_{M\htens M} M\tens 1$ implies $Q\prec_M B$ (this is absurd since any non-trivial corner in $Q$ is diffuse with property (T) and $B$ is abelian and thus unable to contain any diffuse property (T) von Neumann algebra).
% a diffuse abelian von Neumann algebra is isomorphic to $L^\infty(S^1)=L(\ZZ)$ which cannot have (T) as $\ZZ$ does not have (T) 
To see this, we assume that $Q\nprec_M B$ and use Popa's criterion to get a sequence of unitaries $u_n\in Q$ with Fourier coefficients converging to zero with respect to the decomposition $M=B\rtimes \Lambda$. Now we show that $\|E_{M\tens 1}(x\Delta(u_n)y)\|_2\to 0$ for all $x,y\in M\htens M$ and this is a contradiction (again by Popa's criterion)  since we just showed that $\Delta(Q)\prec M\tens 1$ . To see that $\|E_{M\tens 1}(x\Delta(u_n)y)\|_2\to 0$ for all $x,y\in M\htens M$, it is enough to look at basic tensors $x=x_1\tens x_2$ and $y=y_1\tens y_2$. Furthermore we can assume that both $x_1,x_2$ and $y_1,y_2$ have finite Fourier expansions since $(B\rtimes_\alg \Lambda) \odot(B\rtimes_\alg \Lambda) $ is dense in $M\htens M$. We now get:
\begin{align*}
\|E_{M\tens 1} (x\Delta(u_n)y)\|_2 & =\left \| E_{M\tens 1} \left( (x_1\tens x_2)\left(\sum_{h\in \Lambda} (u_n)_hv_h\tens v_h  \right)(y_1\tens y_2)\right) \right \|_2 \\
&= \left \| \sum_{h\in \Lambda} x_1(u_n)_hv_hy_1 \tau_M(x_2v_hy_2)  \right\|_2 \\
&\leq  \sum_{h\in \Lambda} \|x_1\|_\infty \|y_1\|_\infty \|(u_n)_h\|_2  |\tau_M(x_2v_hy_2)| 
\end{align*}
Since both $x_2$ and $y_2$ are assumed to have finite Fourier expansions relative to the decomposition $M=B\rtimes \Lambda$, the factor $|\tau_M(x_2v_hy_2)|=|\tau_M((y_2x_2)v_h)=|\tau_B(E_B(y_2x_2 v_h))| $ is only non-zero  for a finite set of $h$'s in $\Lambda$;  this finite set only depends on $x_2$ and $y_2$ and is therefore, in particular, independent of $n$. Since the Fourier coefficients of the $u_n$'s are assumed to go to zero in 2-norm the whole sum therefore converges to zero.

\end{proof}
We are now ready to prove the transfer of rigidity theorem.
\begin{proof}[Proof of Theorem \ref{transfer-thm}]
 Let $\delta>0$ be given and consider the deformation $\id\tens \alpha_t$ of $M\htens \tilde{M}$. Since  $Q$ has property (T) the inclusion  $\Delta(Q)\subset M\htens\tilde{M}$ is rigid and hence there exists a $t>0$ (which will be fixed for the rest of this proof) such that 
\begin{align}\label{ulighed-1} 
 \|(\id \tens \alpha_t)(\Delta(u))-\Delta(u)\|_2\leq \frac{\delta}{2} \ \text{ for all } u\in \U(Q).
 \end{align}
Define $S:=\{ h\in \Lambda \mid  \|\alpha_t(v_h)-v_h\|_2\leq \delta \}$. We need to show that $S$ contains a sequence whose Fourier coefficients (relative to $M=A\rtimes \Gamma$) all go to zero. To prove this it suffices to prove the following claim
\[
\forall \varps>0 \ \forall F\subset \Gamma \text{ finite }: S\cap \underbrace{\{h\in \Lambda \mid \|P_F(v_h)\|_2< \varps  \}}_{=:T_F}\neq \emptyset.
\]
(because letting $\varps$ run through the sequence $\frac1n$ and $F$ run through an increasing sequence exhausting $\Gamma$ we then get the desired sequence of unitaries.) Fix a finite subset $F\subset \Gamma$ and an  $\varps>0$. We first do a little auxiliary computation that will be needed in a short while. For $u=\sum_{h\in \Lambda} b_hv_h\in M$ we have 
\begin{align}\label{comp}
\left \| (\id \tens P_F)\Delta(u)\right \|_2^2& =\|(\id\tens P_F)\sum_{h\in \Lambda} b_hv_h\tens v_h \|_2^2 \notag\\
&= \sum_{h\in \Lambda} \|b_h\|_2^2\|P_F(v_h)\|_2^2\notag \\
&= \sum_{h\in \Lambda}  \|b_h\|_2^2\| P_F\sum_{g\in \Gamma}(v_h)_gu_g\|_2^2\notag \\
&= \sum_{h\in \Lambda} \|b_h\|_2^2\|\sum_{g\in F}(v_h)_gu_g\|_2^2\notag \\
&= \sum_{h\in \Lambda}\sum_{g\in F} \|b_h\|_2^2\|(v_h)_g\|_2^2 
\end{align}
By Lemma \ref{non-embedding-lem} we have $\Delta(Q)\nprec_{M\htens M} M\htens A$ so we can find  unitaries $u_n\in Q$ such that
\[
\|  E^{M\htens M}_{M\htens A}  (x \Delta(u_n) y  )\|_2\to 0 \ \text{ for all } x,y\in M\htens M.
\]
Putting $x=1$ and considering $y=1\tens u_g$ for some $g\in \Gamma$  we get
\begin{align*}
\|  E^{M\htens M}_{M\htens A}  (x\Delta(u_n) y  )\|_2 &= \|  (\id \tens E^{M}_{A})  (\Delta(u_n) (1\tens u_g)  )\|_2^2\\
&= \|  (\id \tens E^{M}_{A})  \sum_{h\in \Lambda} (u_n)_hv_h\tens v_hu_g \|_2^2\\
&=\|  \sum_{h\in \Lambda} (u_n)_hv_h\tens   E^{M}_{A} (v_hu_g )\|_2^2 \\
&=\|  \sum_{h\in \Lambda} (u_n)_hv_h\tens   (E^{M}_{A}  \sum_{\gamma \in \Gamma}(v_h)_\gamma  u_{\gamma g}  )\|_2^2 \\
&=\|  \sum_{h\in \Lambda} (u_n)_h v_h\tens   (v_h)_{g^{-1}} )\|_2^2\\
&=\sum _{h\in \Lambda} \| (u_n)_h \|_2^2 \|(v_{h})_{g^{-1}}\|_2^2.
\end{align*}
Since this quantity goes to zero for all $g\in \Gamma$ it follows from the computation \eqref{comp} that $\|(\id\tens P_F)\Delta(u_n)\|_2\to 0$. We can therefore find $u\in \U(Q)$ (more precisely one of the $u_n$'s) such that

\begin{align}\label{ulighed-2}
\|(\id\tens P_F)\Delta(u)\|_2<\frac{\varps}{2}.
\end{align}
Expanding $u$ relative to $M=B\rtimes \Lambda$ as $\sum_{h\in \Lambda}b_h v_h$  we get from this that
\[
\frac{\varps^2}{4}\geq \sum_{h\in \Lambda}\|b_h\|^2 \|P_F(v_h)\|_2^2\geq \sum_{h\in \Lambda \setminus T_F} \varps^2\|b_h\|_2^2,
\] 
and thus $\sum_{h\in \Lambda\setminus T_F}\|b_h\|_2^2\leq \frac14$.  Similarly,  using the inequality \eqref{ulighed-1} we get 
\begin{align}
\frac{\delta^2}{4} &\geq \|(\id\tens\alpha_t)\Delta(u)-\Delta(u) \|_2^2\notag\\
&= \| (\id\tens\alpha_t) \sum_{h\in \Lambda} b_hv_h\tens v_h -\sum_{h\in \Lambda} b_hv_h \tens v_h\|_2^2\notag\\
&= \| \sum_{h\in \Lambda} b_hv_h\tens (\alpha_t(v_h)-v_h)   \|_2^2\notag \\
&=\sum_{h\in \Lambda} \|b_hv_h\|_2^2\|\alpha_t(v_h)-v_h\|_2^2\tag{first leg orthogonal}\\
&\geq \sum_{h\in \Lambda\setminus S} \|b_h\|_2^2\|\alpha_t(v_h)-v_h\|_2^2 \tag{$v_h$ unitary}\\
&\geq \sum_{h\in \Lambda\setminus S} \|b_h\|_2^2\delta^2.
\end{align}
Hence  $\sum_{h\in \Lambda\setminus S}\|b_h\|_2^2\leq  \frac14$ so in total $\sum_{h\in \Lambda\setminus (S\cap T_F)}\|b_h\|^2\leqslant \frac12$. But since $\|u\|_2^2=1$ we therefore cannot have $S\cap T_F=\emptyset$.
\end{proof}
\section{Implementation from transferred rigidity}\label{implementation-section}
Throughout this section we consider again the group $\Gamma$ from Theorem \ref{chifan-peterson-thm}, the crossed product von Neumann algebra $M=A\rtimes \Gamma$ as well as the enlarged algebra $\tilde{M}$ and its deformation $\alpha_t$ introduced in section \ref{deform-from-cocycles}. Our goal  is to prove the following theorem which roughly states that if we  have a subalgebra $B\subset M$ with transferred rigidity (i.e.~the setup from Theorem \ref{transfer-thm}) then the deformation $\alpha_t$ is implemented by a partial isometry on this subalgebra. Precisely we prove the following.

\begin{thm}[Chifan-Peterson, \cite{chifan-peterson}]\label{implementation-thm}
Assume that $B\subset M$ is an abelian von Neumann subalgebra and that there exists a $t>0$ and a sequence $v_n\in \N_M(B)$\footnote{Recall that the \emph{normalizer} $\N_M(B)$ is defined as $\{u\in \U(M)\mid u^*Bu=B\}$.} such that 
\begin{itemize}
\item[(a)] $\|\alpha_t(v_n)-v_n\|_2<\delta:=\frac{1}{200}$ for all $n\in \NN$. 
\item[(b)] The Fourier coefficients of $v_n$ (relative to the decomposition $M=A\rtimes \Gamma$) go to zero in $2$-norm.
\end{itemize}
Then there exists a non-zero partial isometry $v\in \tilde{M}$ such that $\alpha_t(x)v=vx$ for all $x\in B$.
\end{thm}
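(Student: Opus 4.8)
The plan is to use Corollary~\ref{implement-cor} applied to the subalgebra $Q := B$. According to that corollary, it suffices to produce a $\delta' > 0$ such that $\|E_M(\alpha_t(x))\|_2 \geq \delta'$ for every $x \in \U(B)$. So the entire task reduces to establishing such a uniform lower bound, and the sequence $v_n$ together with hypotheses (a) and (b) is exactly what should let us do this. The rough idea is that conjugation by $v_n$ normalizes $B$, so for $x \in \U(B)$ the element $v_n^* x v_n$ again lies in $\U(B)$; meanwhile (a) says $\alpha_t$ barely moves $v_n$, and (b) together with Lemma~\ref{lem-1}/Lemma~\ref{lem-2} says that squeezing by $v_n$ pushes things toward $\tilde M \ominus M$. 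Playing these off against each other should force $\|E_M(\alpha_t(x))\|_2$ away from $0$, uniformly in $x$.

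In more detail, first I would argue by contradiction: suppose no such $\delta'$ exists, so there is a sequence $x_k \in \U(B)$ with $\|E_M(\alpha_t(x_k))\|_2 \to 0$. Equivalently, writing $\xi_k := \alpha_t(x_k) \in \U(\tilde M)$, we have $E_M(\xi_k) \to 0$ in $2$-norm, i.e. $\xi_k$ is asymptotically orthogonal to $M$. Now consider the element $y_k := v_n^* x_k v_n$, which lies in $\U(B) \subseteq \U(M)$ because $v_n \in \N_M(B)$. Apply $\alpha_t$: since $\alpha_t$ is a $*$-automorphism, $\alpha_t(y_k) = \alpha_t(v_n)^* \alpha_t(x_k) \alpha_t(v_n)$. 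Because $B \subseteq M$ and $y_k \in M$, we know $\alpha_t(y_k)$ has some component in $M$; in fact the point is that $\|E_M(\alpha_t(y_k))\|_2$ cannot be too small if we choose the relation correctly. The cleanest route is probably: since $y_k \in \U(M)$, Lemma~\ref{funky-lem} gives $\tau_{\tilde M}(\alpha_t(y_k) y_k^*) \geq \|E_M(\alpha_t(y_k))\|_2^2$, and on the other hand the left-hand side is controlled by how far $\alpha_t$ moves things, which we would like to keep uniformly bounded below. I would then replace $\alpha_t(v_n)$ by $v_n$ at the cost of $2\delta$ (using (a) and that the $x_k$, $v_n$ are unitaries, so each such swap changes the $2$-norm of a product by at most $\delta$), obtaining $\alpha_t(y_k) \approx v_n^* \alpha_t(x_k) v_n = v_n^* \xi_k v_n$ up to a $2$-norm error of roughly $2\delta$. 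Now apply Lemma~\ref{lem-1} (with $x = v_n^*$... no: rather, fix $k$, send $n \to \infty$): since $\xi_k \in \tilde M \ominus M$ asymptotically — more precisely, decompose $\xi_k = E_M(\xi_k) + (\xi_k - E_M(\xi_k))$ with the first term small — Lemma~\ref{lem-1} says $\|E_M(v_n^* (\xi_k - E_M(\xi_k)) v_n)\|_2 \to 0$ as $n \to \infty$ (here the Fourier coefficients of $v_n$ going to zero is precisely hypothesis (b), and $\xi_k - E_M(\xi_k) \in \tilde M \ominus M$). Hence for $n$ large, $\|E_M(\alpha_t(y_k))\|_2 \lesssim 2\delta + \|E_M(\xi_k)\|_2 + o(1)$.

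Combining, I get, for suitable $n = n(k)$ large, that $\|E_M(\alpha_t(y_k))\|_2$ is at most roughly $2\delta + \|E_M(\xi_k)\|_2 + o(1)$, which tends to $2\delta$ as $k \to \infty$. But $y_k = v_{n(k)}^* x_k v_{n(k)} \in \U(B)$, and by the same token we should be able to extract from it — or directly from the contradiction hypothesis applied to the unitary $y_k$, or rather we need a lower bound. The key trick to close the loop: apply Lemma~\ref{funky-lem} to $y_k$ to get $\tau_{\tilde M}(\alpha_t(y_k) y_k^*) \geq \|E_M(\alpha_t(y_k))\|_2^2$, but also estimate $\tau_{\tilde M}(\alpha_t(y_k) y_k^*)$ from below by $1 - \frac12\|\alpha_t(y_k) - y_k\|_2^2 \cdot(\text{const})$, and bound $\|\alpha_t(y_k) - y_k\|_2$ using the structure: the monotonicity (Proposition~\ref{monotone-prop}) is not directly available on $\tilde M$, so instead I would unwind $\|\alpha_t(y_k) - y_k\|_2 = \|v_{n(k)}^*\alpha_t(x_k)v_{n(k)} - v_{n(k)}^* x_k v_{n(k)}\|_2 + O(\delta) = \|\alpha_t(x_k) - x_k\|_2 + O(\delta)$, and since $\|E_M(\alpha_t(x_k))\|_2 \to 0$ while $\alpha_t(x_k)$ is a unitary, $\|\alpha_t(x_k) - x_k\|_2$ stays bounded away from... hmm, this needs $x_k$ to have nontrivial overlap with $\alpha_t(x_k)$, which is not guaranteed. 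The correct balancing, which I would work out carefully with the explicit constant $\delta = \tfrac{1}{200}$, compares $\|E_M(\alpha_t(y_k))\|_2 \to 0$-type smallness against a lower bound $\|E_M(\alpha_t(y_k))\|_2^2 \geq \tau_{\tilde M}(\alpha_t(y_k)y_k^*) \geq 1 - \text{(something} \lesssim \delta\text{)}$, forcing $1 - O(\delta) \leq (2\delta + o(1))^2$, which fails for $\delta = \tfrac1{200}$ — contradiction. This yields the desired $\delta'$ (concretely something like $\delta' \approx \tfrac12$), and then Corollary~\ref{implement-cor} produces the partial isometry $v$ with $\alpha_t(x)v = vx$ for all $x \in B$.

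\textbf{Main obstacle.} The delicate point is the bookkeeping of errors: one must apply Lemma~\ref{lem-1} (or its uniform variant Corollary~\ref{cor-3} / Lemma~\ref{lem-2}) to the \emph{fixed} squeezing unitaries $v_n$ while the $x_k$ vary, so the order of quantifiers ($k$ first, then $n(k)$) matters, and one needs the errors from (a) and from Lemma~\ref{lem-1} and from $\|E_M(\alpha_t(x_k))\|_2 \to 0$ to all be controlled simultaneously. Getting a clean enough inequality that the explicit constant $\tfrac1{200}$ actually produces a contradiction — rather than some near-miss — is where the real care is required; the rest is an application of Lemma~\ref{funky-lem} and Corollary~\ref{implement-cor}.
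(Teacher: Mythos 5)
Your overall framing is the right one and matches the paper: reduce everything to a uniform lower bound $\|E_M(\alpha_t(x))\|_2\geq \delta'>0$ for $x\in\U(B)$ and then invoke Corollary~\ref{implement-cor}. But the core of your argument — the step that is supposed to "close the loop" — has a genuine gap, and you half-acknowledge it yourself. Under your contradiction hypothesis you produce unitaries $y_k=v_{n(k)}^*x_kv_{n(k)}\in\U(B)$ with $\|E_M(\alpha_t(y_k))\|_2\lesssim 2\delta+o(1)$; this by itself contradicts nothing, since the hypothesis being contradicted is precisely that such unitaries exist. To get a contradiction you then assert a lower bound $\tau_{\tilde M}(\alpha_t(y_k)y_k^*)\geq 1-O(\delta)$, i.e.\ that $\alpha_t$ almost fixes $y_k$. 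Nothing in the hypotheses gives this: (a) controls only $\alpha_t(v_n)-v_n$, not $\alpha_t$ on arbitrary elements of $B$, and in fact under your own assumption $\tau(\alpha_t(x_k)x_k^*)=\tau(E_M(\alpha_t(x_k))x_k^*)\leq\|E_M(\alpha_t(x_k))\|_2\to 0$, so $\alpha_t$ moves $x_k$ (hence $y_k$, up to $O(\delta)$) almost maximally and the bound you need is provably false. (You also quote Lemma~\ref{funky-lem} backwards at that point: it gives $\tau(\alpha_t(y)y^*)\geq\|E_M(\alpha_t(y))\|_2^2$, not the reverse.)

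The missing idea is where the abelianness of $B$ enters. For a fixed $b\in\U(B)$ set $b_n:=v_nbv_n^*\in\U(B)$; by (a) (and Proposition~\ref{monotone-prop}, after shrinking $t$) one gets $\|\alpha_t(b_n)-b_n\|_2\leq 3\delta$ uniformly in $n$. Then for an \emph{arbitrary} $d\in\U(B)$ one writes $d=b_ndb_n^*$ (commutativity of $B$) and estimates $\|\delta_t(d)\|_2^2=\ip{\delta_t(d)}{\alpha_t(b_ndb_n^*)}$ by swapping $\alpha_t(b_n)$ for $b_n$ on both sides (cost $6\delta$) and then squeezing $\delta_t(d)\in\tilde M\ominus M$ between elements of $M$ and the sequence $b_n^*$. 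Whether Lemma~\ref{lem-2} applies to the $b_n$ forces a dichotomy: either some $b$ has $\limsup_n\|P_F(b_n)\|_2\leq\delta_0$ for every finite $F$, in which case Lemma~\ref{lem-2} gives $\|\delta_t(d)\|_2^2\leq 6\delta+2\delta_0$ for \emph{all} $d\in\U(B)$; or every $b$ keeps Fourier mass $>\delta_0$ on some finite set $F_b$, in which case one splits $b_n=P_{F_b}(b_n)+\xi_n$ with $\liminf_n\|\xi_n\|_2\leq\sqrt{1-\delta_0^2}$ and uses the uniform Corollary~\ref{cor-3} on the $P_{F_b}$ part to get $\|\delta_t(b)\|_2^2\leq 2\delta+\sqrt{1-\delta_0^2}$. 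Either way $\|\delta_t(d)\|_2^2\leq c<1$ directly (no contradiction argument), and Pythagoras gives $\|E_M(\alpha_t(d))\|_2\geq\sqrt{1-c^2}$; this is where the specific constants $\delta=\tfrac1{200}$, $\delta_0=\tfrac16$ are consumed. Your proposal contains neither the $d=b_ndb_n^*$ trick nor the case dichotomy, and without them the argument does not go through.
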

\begin{rem}
In what follows we are going to refer to a partial isometry $v\in \tilde{M}$ such that $\alpha_t(x)v=vx$ for all $x\in B$ as  a \emph{partial isometry implementing $\alpha_t$ on $B$}. Note that if $v$ happens to be a unitary then conjugation by $v$ constitutes an honest implementation of the restriction of $\alpha_t$ to $B$.

\end{rem}
Define $\delta_t\colon M\to \tilde{M}\ominus M:=\{x\in \tilde{M}\mid E_M(x)=0\}$ by $\delta_t(x)=\alpha_t(x)-E_M(\alpha_t(x))$. For the estimates that are to come we first note that for a unitary $x\in M$ we have $\|\delta_t(x)\|_2\leq 1$ since
\begin{align}\label{eq117}
1=\|\alpha_t(x)\|_2^2= \|E_M(\alpha_t(x)) \|_2^2 +\| (1-E_M)\alpha_t(x)  \|_2^2 = \|E_M(\alpha_t(x))\|_2^2 +\|\delta_t(x)\|_2^2. 
\end{align}

We first need to prove  the following lemma.
\begin{lem}\label{c-lem}
Under the assumptions in Theorem \ref{implementation-thm} there exists $0<c<1$ such that $\|\delta_t(b)\|_2\leq c$ for all $b\in \U(B)$.
\end{lem}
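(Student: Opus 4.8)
The goal is to show that $\sup_{b\in\U(B)}\|\delta_t(b)\|_2=:c<1$, i.e.\ that $\alpha_t$ does not move any corner of $\U(B)$ too far into $\tilde M\ominus M$. The natural strategy is by contradiction: if no such $c<1$ exists, then there is a sequence $b_k\in\U(B)$ with $\|\delta_t(b_k)\|_2\to 1$, equivalently (by \eqref{eq117}) with $\|E_M(\alpha_t(b_k))\|_2\to 0$. I would then try to use the rigidity sequence $v_n$ to derive a contradiction, exploiting that $v_n$ normalizes $B$, that $\alpha_t$ almost fixes $v_n$ (assumption (a)), and that the $v_n$ have vanishing Fourier coefficients (assumption (b)), so Lemma \ref{lem-1}/Corollary \ref{cor-3} applies with $v_n$ playing the role of the escaping sequence.

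The key computation: for $b\in\U(B)$ and $n\in\NN$, since $v_n^*bv_n\in B$ (here $v_n$ normalizes $B$), one has $\alpha_t(b)=\alpha_t(v_n)\alpha_t(v_n^*bv_n)\alpha_t(v_n)^*$, and using (a) to replace $\alpha_t(v_n)$ by $v_n$ up to an error of size $\delta$ in $2$-norm, $\alpha_t(b)$ is $2\delta$-close in $\|\cdot\|_2$ to $v_n\,\alpha_t(v_n^*bv_n)\,v_n^*$. Now apply $E_M$ and use that $E_M$ is $\|\cdot\|_2$-contractive: $\|E_M(\alpha_t(b))\|_2 \geq \|E_M(v_n\alpha_t(v_n^*bv_n)v_n^*)\|_2 - 2\delta$. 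The point of conjugating is that $E_M(v_n\,(\cdot)\,v_n^*)$ with $v_n\in M$ can be pushed around; more precisely I would decompose $\alpha_t(w)=E_M(\alpha_t(w))+\delta_t(w)$ for $w=v_n^*bv_n\in\U(B)\subset\U(M)$, so that $v_n\alpha_t(w)v_n^* = v_n E_M(\alpha_t(w))v_n^* + v_n\delta_t(w)v_n^*$, the first term lying in $M$ (hence unaffected by $E_M$) and the second having $E_M(v_n\delta_t(w)v_n^*)$ small: indeed $\delta_t(w)\in\tilde M\ominus M$ and $v_n^*$ has vanishing Fourier coefficients, so by a version of Corollary \ref{cor-3} (or Lemma \ref{lem-1} applied with the roles arranged so that the escaping sequence flanks an element of $\tilde M\ominus M$) we get $\sup_{w\in\U(B)}\|E_M(v_n\delta_t(w)v_n^*)\|_2\to 0$ as $n\to\infty$ — uniformly in $w$, which is crucial since $w$ depends on $b$. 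Hence $\|E_M(v_n\alpha_t(w)v_n^*)\|_2 \geq \|E_M(\alpha_t(w))\|_2 - o(1) = \|E_M(\alpha_t(b))\|_2 - o(1)$ if one checks that $\|E_M(\alpha_t(v_n^*bv_n))\|_2$ and $\|E_M(\alpha_t(b))\|_2$ are comparable — actually one must also massage $\alpha_t(v_n^*bv_n)$ versus $v_n^*\alpha_t(b)v_n$ using (a) again. Chaining these estimates: $\|E_M(\alpha_t(b))\|_2 \geq \|E_M(\alpha_t(b))\|_2 - C\delta - o(1)$, which is vacuous; so the inequality must instead be set up to produce a genuine lower bound on $\|E_M(\alpha_t(b))\|_2$ of the form $\geq \tfrac12 - C\delta$ (or similar), using that $\|v_n E_M(\alpha_t(w)) v_n^*\|_2 = \|E_M(\alpha_t(w))\|_2$ and that $\|w\|_2=1$ forces $\|E_M(\alpha_t(w))\|_2$ plus $\|\delta_t(w)\|_2$ in quadrature to equal $1$. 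The conclusion is then that $\|E_M(\alpha_t(b))\|_2$ is bounded below by a constant depending only on $\delta=\tfrac1{200}$ and uniform in $b\in\U(B)$, so $\|\delta_t(b)\|_2^2 = 1 - \|E_M(\alpha_t(b))\|_2^2 \leq c^2 < 1$.

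The main obstacle I anticipate is the uniformity in $b$: the estimate $\|E_M(v_n\delta_t(v_n^*bv_n)v_n^*)\|_2\to 0$ has to hold uniformly over all $b\in\U(B)$, and the only tool producing such uniform vanishing is the "uniform version" Corollary \ref{cor-3}, whose hypotheses are phrased in terms of $P_F$ and $(M)_1$ rather than directly in terms of $\delta_t$; so I expect the technical heart of the argument to be rewriting $v_n\,\delta_t(v_n^*bv_n)\,v_n^*$ in a form to which Corollary \ref{cor-3} (or Lemma \ref{lem-1} together with a uniformity argument over the unit ball, as in the proof of Lemma \ref{lem-2}) literally applies — in particular keeping careful track of which unitary is the "escaping" one and ensuring the flanking element sits in $\tilde M\ominus M$. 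Everything else is bookkeeping with $E_M$-contractivity, the triangle inequality, and the Pythagorean identity \eqref{eq117}; the numerical value $\delta=\tfrac1{200}$ is chosen precisely so that the accumulated errors $C\delta$ keep the final constant strictly below $1$.
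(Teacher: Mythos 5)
There is a genuine gap, and you have in fact put your finger on it yourself: the chain of estimates you set up is circular, and the proposed repair (``set up the inequality to produce a genuine lower bound of the form $\geq\tfrac12-C\delta$'') is neither carried out nor can it work in the form described. The term you hope to control uniformly, $E_M(v_n\delta_t(w)v_n^*)$ with $w=v_n^*bv_n$, is identically zero: $E_M$ is an $M$-$M$-bimodule map and $v_n\in M$, so $E_M(v_n\delta_t(w)v_n^*)=v_nE_M(\delta_t(w))v_n^*=0$. Conjugation by elements of $M$ commutes with $E_M$ and therefore can never, by itself, improve a bound on $\|E_M(\alpha_t(\cdot))\|_2$; this is exactly why your chain collapses to the tautology $\|E_M(\alpha_t(b))\|_2\geq\|E_M(\alpha_t(b))\|_2-C\delta$. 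For Lemma \ref{lem-1}, Lemma \ref{lem-2} or Corollary \ref{cor-3} to bite, the escaping unitary must end up sandwiched between an element of $\tilde{M}$ and an element of $\tilde{M}\ominus M$, i.e.\ one must reach an expression of the form $E_M(x\,u_n\,y)$ with $y\in\tilde{M}\ominus M$; your arrangement never produces such an expression.

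The missing idea is a dichotomy on the Fourier coefficients (relative to $M=A\rtimes\Gamma$) of the conjugates $b_n:=v_nbv_n^*$, which lie in $\U(B)$ because $v_n\in\N_M(B)$ and satisfy $\|\alpha_t(b_n)-b_n\|_2\leq 3\delta$ by (a) (after shrinking $t$ via Proposition \ref{monotone-prop} so that also $\|\alpha_t(b)-b\|_2<\delta$). Case 1: some $b\in\U(B)$ has $\limsup_n\|P_F(b_n)\|_2\leq\delta_0:=\tfrac16$ for every finite $F$. Then $(b_n)$ is itself an escaping, nearly $\alpha_t$-invariant sequence inside $B$, and for an \emph{arbitrary} $d\in\U(B)$ one uses commutativity of $B$ to write $\|\delta_t(d)\|_2^2=\ip{\delta_t(d)}{\alpha_t(b_ndb_n^*)}$, replaces $\alpha_t(b_n^{\pm1})$ by $b_n^{\pm1}$ at total cost $6\delta$, and lands on $\|E_M(\alpha_t(d^*)\,b_n^*\,\delta_t(d))\|_2$, where the escaping element now sits between $\alpha_t(d^*)\in\tilde{M}$ and $\delta_t(d)\in\tilde{M}\ominus M$; Lemma \ref{lem-2} gives $\limsup_n$ of this at most $2\delta_0$, whence $\|\delta_t(d)\|_2^2\leq 6\delta+2\delta_0<1$ for all $d$. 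Case 2: every $b$ admits a finite $F$ with $\limsup_n\|P_F(b_n)\|_2>\delta_0$. Then one splits $b_n=P_F(b_n)+\xi_n$ with $\liminf_n\|\xi_n\|_2\leq\sqrt{1-\delta_0^2}<1$, writes $\|\delta_t(b)\|_2^2=\ip{v_n\delta_t(b)v_n^*}{v_n\alpha_t(b)v_n^*}$, replaces the outer $v_n$'s in the right-hand vector by $\alpha_t(v_n)$ at cost $2\delta$ to produce $\alpha_t(b_n)$, kills the $\alpha_t(P_F(b_n))$ contribution with Corollary \ref{cor-3} (this is where the uniformity over the unit ball is genuinely used) and bounds the $\alpha_t(\xi_n)$ contribution by Cauchy--Schwarz, giving $\|\delta_t(b)\|_2^2\leq2\delta+\sqrt{1-\delta_0^2}<1$. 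Without this case split your argument does not close.
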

\begin{proof}
The proof is divided into two cases.\\

\noindent \textbf{Case 1:} $\exists  b\in \U(B)$ such that $ \forall F\subset \Gamma$ finite:  $\limsup_n \|P_F(v_nbv_n^*) \|_2\leq \delta_0:=\frac16$.\\

\noindent(here the $v_n$'s are the ones we assume exist in Theorem \ref{implementation-thm}) In this case, we define $b_n=v_nbv_n^*$ and note that $b_n\in \U(B)$ since we assume $v_n\in \N_M(B)$. By Proposition \ref{monotone-prop} and assumption (a) we have $\|\alpha_s(v_n)-v_n\|_2< \delta$ for all $n\in \NN$ and all $s\in [0,t]$, so by replacing $t$ by a smaller number if necessary we may assume that $\|\alpha_t(b)-b\|_2<\delta$. 
Then  the assumption (a) implies that $\|\alpha_t(b_n)-b_n\|_2\leq 3\delta $ for all $n\in \NN$.
Now let $d\in \U(B)$ be given. Then
\begin{align}
\|\delta_t(d)\|_2^2 &=\ip{\delta_t(d)}{\delta_t(d) +E_M(\alpha_t(d))} \tag{$\delta_t(d)\perp M$}\\
&=\ip{\delta_t(d)  }{\alpha_t(d)}\notag\\
 &=\ip{\delta_t(d)  }{\alpha_t(b_nd b_n^*)} \tag{$B$ abelian}\\
&\leq |\ip{\delta_t(d)  }{(\alpha_t(b_n)-b_n) \alpha_t(d)\alpha_t (b_n^*)} | + |\ip{\delta_t(d)  }{b_n \alpha_t(d)\alpha_t (b_n^*)} | \notag\\
&\leq 3\delta  + |\ip{\delta_t(d)  }{b_n \alpha_t(d)\alpha_t (b_n^*)} | \notag\\
&\leq 6\delta  + |\ip{\delta_t(d)  }{b_n \alpha_t(d)b_n^*} | \tag{same trick on the right}\\ 
&= 6\delta  + |\tau_{\tilde{M}}(b_n \alpha_t(d^*)b_n^* \delta_t(d))|\notag \\
&= 6\delta  + |\tau_M \circ E_M (b_n \alpha_t(d^*)b_n^* \delta_t(d))| \notag  \\ 
&= 6\delta  + |\tau_M  (b_nE_M( \alpha_t(d^*)b_n^* \delta_t(d)))|\notag \\ 
&\leq 6\delta + \|E_M( \alpha_t(d^*)b_n^* \delta_t(d))\|_2\tag{Cauchy-Schwarz} 
\end{align}
By assumption 
\[
\limsup_n \|P_F(b_n^*) \|_2=\limsup_n\|P_{F^{-1}}(b_n)^*\|_2 =\limsup_n\|P_{F^{-1}}(b_n)\|_2   \leq \delta_0
\]
for all finite $F\subset \Gamma$,  and from Lemma \ref{lem-2} (with $x:=\alpha_t(d^*)\in (M)_1$ and $y:= \delta_t(d)\in (\tilde{M}\ominus M)_2$ (note the ``2''!) we therefore get
that $\limsup_n \|E_M( \alpha_t(d^*)b_n^* \delta_t(d))\|_2\leq 2\delta_0 $. In total we now have
\[
\|\delta_t(d)\|_2^2 \leq 6\delta + 2\delta_0=\frac{6}{200}+ \frac{2}{6}\simeq 0.3633<1.
\]
What is left is to treat the following situation:\\

\noindent \textbf{Case 2:} $\forall b\in \U(B) \ \exists F_b\subset \Gamma$ finite such that $\limsup_n\|P_{F_b}(v_nbv_n)\|_2> \delta_0.$ \\

\noindent Fix $b\in \U(B)$ and its corresponding\footnote{There may of course be several such sets, but we just choose one.} $F_b=:F$. Put again $b_n:=v_nbv_n^*$ and define
\[
\xi_n:=b_n-P_F(b_n) = \sum_{g\in \Gamma}(b_n)_gu_g -\sum_{g\in F}(b_n)_g u_g=\sum_{g\notin F} (b_n)_gu_g.
\]
Then
\[
\|\xi_n\|_2^2=\sum_{g\notin F}\|(b_n)_g\|_2^2= 1-\sum_{g\in F}\|(b_n)_g\|_2^2 = 1-\|P_F(b_n)  \|_2^2,
\]  
so by assumption $\liminf_n \|\xi_n\|_2\leq \sqrt{1-\delta_0^2}$. As $\delta_t(b)\perp M$ in $L^2(\tilde{M})$ we  have
\begin{align}
\|\delta_t(b)\|_2^2&= \ip{\delta_t(b)}{\delta_t(b) + E_M(\alpha_t(b))} \notag\\
&=\ip{\delta_t(b)}{\alpha_t(b)}\notag\\
&=\ip{v_n\delta_t(b) v_n^*}{v_n\alpha_t(b) v_n^*}\notag\\
&= | \ip{v_n\delta_t(b) v_n^*}{(v_n-\alpha_t(v_n))\alpha_t(b)v_n^*  + \alpha_t(v_n)\alpha_t(b)v_n^*} | \notag \\
&\leq \delta + | \ip{v_n\delta_t(b) v_n^*}{ \alpha_t(v_n)\alpha_t(b)v_n^*} | \tag{using (a) and $\|\delta_t(b)\|_2\leq 1$}\\
&\leq 2\delta + | \ip{v_n\delta_t(b) v_n^*}{ \alpha_t(v_n)\alpha_t(b)\alpha_t(v_n^*)} | \tag{same trick on the right}\\
&= 2\delta + | \ip{v_n\delta_t(b) v_n^*}{ \alpha_t(b_n)} |\tag{$b_n:=v_nbv_n^*$}\\
&= 2\delta + | \ip{v_n\delta_t(b) v_n^*}{ \alpha_t(\xi_n) +\alpha_t(P_F(b_n)) } |\tag{$\xi_n:=b_n-P_Fb_n$}\\
&\leq 2\delta + \| v_n\delta_t(b) v_n^*\|_2\|\alpha_t(\xi_n)\|_2  +| \ip{v_n\delta_t(b) v_n^*}{ \alpha_t(P_F(b_n)) } |\notag\\
&\leq 2\delta + \|\xi_n\|_2  +| \ip{v_n\delta_t(b) v_n^*}{ \alpha_t(P_F(b_n)) } |\notag\\
&=2\delta + \|\xi_n\|_2  +| \tau_{\tilde{M}}\left( \alpha_t(P_F(b_n))^*v_n\delta_t(b)v_n^*     \right)  |\notag\\
&=2\delta + \|\xi_n\|_2  +| \tau_{\tilde{M}}\left(   v_n^*\alpha_t(P_F(b_n))^*v_n\delta_t(b)     \right)  |\notag\\
&=2\delta + \|\xi_n\|_2  +| \tau_{M}\left( E_M(  v_n^*\alpha_t(P_F(b_n))^*v_n\delta_t(b)   )  \right)  |\notag\\
&=2\delta + \|\xi_n\|_2  +| \tau_M\left( v_n^* E_M( \alpha_t(P_F(b_n))^*v_n\delta_t(b)   )  \right)  |\notag\\
&\leq 2\delta + \|\xi_n\|_2  +\| E_M( \alpha_t(P_F(b_n))^* v_n\delta_t(b)   )  \|_2 \notag\\
&= 2\delta + \|\xi_n\|_2  +\| E_M( \alpha_t(P_{F^{-1}}(b_n^*)) v_n\delta_t(b)   )  \|_2 \tag{$P_F(x)^*=P_{F^{-1}}(x^*)$}
\end{align}
Since $\delta_t(b)\in \tilde{M}\ominus M$, Corollary \ref{cor-3} gives that last summand converges to zero. Thus
\[
\| \delta_t(b)\|_2^2\leq 2\delta +\liminf_n \|\xi_n\|_2 \leq 2\delta +\sqrt{1-\delta_0^2}=2 \frac{1}{200} + \sqrt{1-\frac{1}{6^2}}\simeq 0.9960 <1. 
\]
Thus $c:=2\delta +\sqrt{1-\delta_0^2}$ does the job in both case 1 and case 2.
\end{proof}

With Lemma \ref{funky-lem} at our disposal, the proof of Theorem \ref{implementation-thm} now becomes a triviality:
\begin{proof}[Proof of Theorem \ref{implementation-thm} ]
Lemma \ref{funky-lem} and equation \eqref{eq117} imply that $\|E_M(\alpha_t(x))\|\geq \sqrt{1-c^2}>0$ and  the desired conclusion therefore follows from  Corollary \ref{implement-cor}.
\end{proof}

\section{From partial implementation to unitary conjugacy}\label{conjugacy-section}
In this section we prove the uniqueness of Cartan subalgebras $B\subset M$ on which the deformation $\alpha_t\colon \tilde{M}\to \tilde{M}$ is implemented by a partial isometry. More precisely we prove the following:

\begin{thm}\label{blabla}
Let $B\subset M=A\rtimes \Gamma$ be Cartan and assume that there exists $t>0$ and a non-zero partial isometry $w\in \tilde{M}$ such that $\alpha_t(x)w=wx$ for all $x\in B$. Then there exists $u\in \U(M)$ such that $B=uAu^*$.
\end{thm}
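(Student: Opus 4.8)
The plan is to exploit the malleability of the deformation, exactly as in Popa's original arguments. The starting point is the partial isometry $w$ with $\alpha_t(x)w = wx$ for all $x\in B$. First I would observe that the projection $ww^* \in \tilde{M}$ lies in $\alpha_t(B)'\cap\tilde{M}$ and $w^*w \in B'\cap\tilde{M}$, and that we may cut down and normalize so that, after replacing $w$ by a suitable piece of its polar parts, we obtain a genuine intertwiner. The key technical point is the \emph{transversality} (or ``free malleability'') of the family $(\alpha_s)_{s\in\RR}$: there is a period-$2$ symmetry automorphism $\beta$ of $\tilde{M}$ with $\beta|_M = \id$, $\beta\alpha_s\beta = \alpha_{-s}$, which combined with Proposition \ref{monotone-prop} gives the standard estimate $\|\alpha_{2s}(x)-x\|_2 \le 2\|\alpha_s(x) - E_M(\alpha_s(x))\|_2$ for $x\in M$. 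Using the intertwining relation for $w$ one shows that $\|\alpha_t(b)-b\|_2$ is uniformly small on $\U(B)$ is \emph{not} what we get for free; rather, the existence of $w$ forces $\|\delta_t(b)\|_2$ to be bounded away from $1$ uniformly, hence (by the transversality estimate and iterating the semigroup property) one can ``push $w$ back to $s=0$'': more precisely, a standard continuity/ maximality argument over the set of $s$ for which $\alpha_s$ is implemented on $B$ by a partial isometry shows this set is both open and closed in $[0,\infty)$, so $\alpha_s$ is implemented on $B$ for \emph{every} $s\ge 0$, in particular there is a partial isometry implementing $\alpha_s$ for $s$ as small as we like.

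Next I would convert a partial-isometry intertwiner at small $s$ into a statement of the form $B\prec_{\tilde M} D\htens A$ inside $\tilde M$, and then ``untwist'' it back into $M$. Concretely, the relation $\alpha_s(x)w = wx$ says that the $B$-$B$-bimodule generated by $w$ inside $L^2(\tilde M)$, pulled back along $\alpha_s$ on one side, is finite-dimensional over $B$ on the appropriate side; feeding this into Popa's Theorem \ref{fundamental-thm}, together with the explicit description of $\tilde M = (D\htens A)\rtimes\Gamma$ and the fact that $\alpha_s$ is the identity on $D\htens A$ and only twists the group unitaries by the (now small) cocycle $\omega(sb(g))$, one deduces that $B$ can be conjugated into $D\htens A$ by a unitary close to the identity; since $D$ is abelian and diffuse, a further argument — here one uses that $B$ is a Cartan subalgebra, so its normalizer generates all of $M$, together with mixingness of the Gaussian action $\Gamma\curvearrowright D$ exactly as in Lemma \ref{lem-1} — rules out the $D$-part having positive dimension, forcing $B\prec_M A$ inside $M$ itself. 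The main obstacle, and the step I expect to be most delicate, is precisely this ``descent'': showing that a partial isometry living in the big algebra $\tilde M$ implementing a small twist actually yields intertwining \emph{inside} $M$, and controlling the normalizer so that the relative commutant stays inside $A$ — this is where the Cartan hypothesis is essential and where one must combine the spectral-gap/mixing properties of the Gaussian construction with a careful bookkeeping of Fourier supports.

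Finally, once $B\prec_M A$ is established, the conclusion is immediate: $A$ and $B$ are both Cartan subalgebras of $M$, so by Corollary \ref{cartan-virtual} (Popa) the relation $B\prec_M A$ upgrades to the existence of a unitary $u\in\U(M)$ with $uAu^* = B$, which is exactly the assertion of Theorem \ref{blabla}. I would present the argument in this order: (i) transversality estimate and the open–closed argument to get implementation of $\alpha_s$ on $B$ for small $s$; (ii) the descent from $\tilde M$ to $M$, i.e.\ from the partial-isometry implementation to $B\prec_M A$, using the structure of $\tilde M$, the mixing Gaussian action, and the Cartan property; (iii) invoke Corollary \ref{cartan-virtual} to finish. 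The heart of the matter is step (ii).
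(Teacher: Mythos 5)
There is a genuine gap: the step you yourself identify as ``the heart of the matter'' (your step (ii)) is not actually carried out, and the route you sketch for it does not lead where you need to go. From $\alpha_s(x)w=wx$ you can indeed read off an intertwining statement, but it is of the form $\alpha_s(B)\prec_{\tilde M}B$, not $B\prec_{\tilde M}D\htens A$, and there is no reason the intertwiner should be ``close to the identity'' for small $s$; nothing in your sketch converts this into $B\prec_M A$. The most telling symptom is that your argument never uses the unboundedness of the cocycle $b$. Without it the theorem is false: if $b(g)=\pi(g)\xi-\xi$ is inner, then $\alpha_t|_M=\Ad\bigl((\omega(t\xi)\tens 1)^*\bigr)|_M$ is implemented by a unitary on all of $M$, so the hypothesis of the theorem holds for \emph{every} Cartan subalgebra $B$ while the conclusion clearly need not. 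Any correct proof must therefore invoke $\sup_g\|b(g)\|=\infty$ somewhere, and yours does not. (Your step (i) is also off the paper's toolkit: no period-$2$ symmetry $\beta$ or transversality estimate is constructed here, and none is needed --- the argument works at the fixed $t$.)

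What the paper actually does is argue by contradiction. Suppose $B$ and $A$ are not conjugate; by Corollary \ref{cartan-virtual} then $B\nprec_M A$, so by Lemma \ref{rel-commutant-lem} one gets $B'\cap\tilde M=B$. For $u\in\N_M(B)$ the element $\alpha_t(u)wu^*$ also intertwines, so Lemma \ref{lem-63} yields $w^*\alpha_t(u)w\in Bu\subset M$; since $B$ is Cartan its normalizer generates $M$, whence $w^*\alpha_t(M)w\subseteq M$. Using $ww^*=\alpha_t(p)$ for a projection $p\in B$ and Lemma \ref{p-lig-en-lem}, this is upgraded to a unitary $z\in\tilde M$ with $z^*\alpha_t(M)z\subseteq M$. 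This is then contradicted by the explicit computation $\|E_M(z_1\alpha_t(u_{g_n})z_2)\|_2=\exp(-\|\xi+tb(g_n)+\pi_{g_n}(\eta)\|^2)\to 0$ along a sequence with $\|b(g_n)\|\to\infty$ --- this is exactly where unboundedness enters and where the Cartan hypothesis (normalizer generating $M$, so that the whole of $\alpha_t(M)$, not just $\alpha_t(B)$, gets conjugated into $M$) is indispensable. Your final step (iii) is correct, but the bridge to it is missing.
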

First we need a little result which shows that absence of virtual conjugacy provides us with control over the relative commutants.

\begin{lem}[Popa]\label{rel-commutant-lem}
If $B\subset M=A\rtimes \Gamma$ is von Neumann subalgebra and $B\nprec_M A$ then $B'\cap \tilde{M}\subset M$.
\end{lem}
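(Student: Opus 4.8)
The plan is to analyze an arbitrary element $x \in B' \cap \tilde{M}$ via its Fourier expansion relative to the crossed‑product decomposition $\tilde{M} = (D\htens A) \rtimes \Gamma$, writing $x = \sum_{g\in\Gamma} (d_g \tens a_g) u_g$ with $d_g \tens a_g \in D\htens A$, and to show that all components indexed by $g \ne e$ (or more precisely, all components whose $D$‑part is non‑scalar) must vanish, forcing $x$ into $M$. The mechanism is that $B$, not being virtually conjugable into $A$ inside $M$, supplies (by Popa's criterion, Theorem \ref{fundamental-thm}(iii)) a sequence of unitaries $b_n\in\U(B)$ whose Fourier coefficients relative to $M = A\rtimes\Gamma$ tend to $0$ in $2$‑norm. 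Such a sequence can then be fed into the mixing‑type estimate of Lemma \ref{lem-1}, but applied in the inner‑product form rather than the $E_M$ form.

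Concretely, first I would fix $x\in B'\cap\tilde{M}$ and aim to prove $E_M(x) = x$, equivalently that $\|x - E_M(x)\|_2 = 0$. Set $y := x - E_M(x) \in \tilde{M}\ominus M$. Since $x$ commutes with each $b_n\in\U(B)\subset M$ and $E_M$ is $M$‑bimodular, we also have $b_n y b_n^* = b_n x b_n^* - b_n E_M(x) b_n^* = x - E_M(x) = y$, so $\|y\|_2^2 = \langle b_n y b_n^*, y\rangle = \langle y, b_n^* y b_n\rangle$ for every $n$. The idea is to rewrite $\|y\|_2^2 = \langle b_n^* y\, b_n, y \rangle = \tau_{\tilde{M}}(y^* b_n^* y b_n)$ and then recognize this, after moving one $b_n$ across, as something controlled by $\|E_M(\,\cdot\, b_n\, \cdot\,)\|_2$ with the outer slots living in $\tilde{M}$ and one slot in $\tilde{M}\ominus M$ — precisely the hypotheses of Lemma \ref{lem-1}. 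Since the Fourier coefficients of $b_n$ (relative to $M = A\rtimes\Gamma$) go to zero by Popa's criterion, Lemma \ref{lem-1} gives that the relevant quantity tends to $0$, whence $\|y\|_2^2 = 0$ and $x = E_M(x)\in M$.

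The step I expect to be the main obstacle is the bookkeeping needed to massage $\|y\|_2^2 = \tau_{\tilde{M}}(y^* b_n^* y b_n)$ into exactly the shape $\|E_M(z\, b_n\, w)\|_2$ with $z\in\tilde{M}$ and $w\in\tilde{M}\ominus M$ so that Lemma \ref{lem-1} applies verbatim; one has two copies of $b_n$ floating around and only one factor of $y\in\tilde{M}\ominus M$, so one likely needs to first reduce to $y$ a basic tensor $(\omega(\xi)-e^{-\|\xi\|^2})\tens a)u_h$ (using Kaplansky density and the decomposition of $\tilde{M}_0$ from the proof of Lemma \ref{lem-1}), absorb the $u_h$ and $\omega$‑factors using $b_n$‑conjugation as in that proof, and thereby end up with a single $b_n$ (or $b_n^*$) sandwiched between an element of $\tilde{M}$ and an element of $\tilde{M}\ominus M$ inside $E_M$. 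Once the reduction is set up correctly, the convergence is immediate from Lemma \ref{lem-1}, and the conclusion $B'\cap\tilde{M}\subseteq M$ follows since $x\in B'\cap\tilde{M}$ was arbitrary.
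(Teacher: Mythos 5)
Your strategy is essentially the paper's: both arguments take the sequence of unitaries $b_n\in\U(B)$ supplied by Popa's criterion (Theorem \ref{fundamental-thm}(iii)), feed it into Lemma \ref{lem-1}, and conclude $x=E_M(x)$. The step you flag as the main obstacle, however, has a one-line resolution and does not require decomposing $y$ into basic tensors or any ``absorption by $b_n$-conjugation'' (which would not work as described): since $E_M$ is $M$-bimodular and $b_n^*\in M$ with $\|b_n\|_2=1$,
\[
\|y\|_2^2=\tau_{\tilde{M}}(y^*b_n^{\phantom{*}}yb_n^*)=\tau_{M}\bigl(E_M(y^*b_n^{\phantom{*}}y)\,b_n^*\bigr)\leq\|E_M(y^*b_n^{\phantom{*}}y)\|_2\,\|b_n\|_2=\|E_M(y^*b_n^{\phantom{*}}y)\|_2,
\]
and the right-hand side tends to $0$ by Lemma \ref{lem-1} applied with $y^*\in\tilde{M}$ in the left slot and $y\in\tilde{M}\ominus M$ in the right slot; the second copy of $b_n$ is simply pulled out of $E_M$ and killed by Cauchy--Schwarz. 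The paper sidesteps the two-copies issue by a slightly different arrangement: starting from $1=\|E_M(x^*v_nx)\|_2$ for a \emph{unitary} $x\in B'\cap\tilde{M}$ (which suffices, as unitaries generate), splitting only the right-hand $x$ as $y+E_M(x)$, killing the $y$-term with Lemma \ref{lem-1}, and bounding the remaining term by $\|E_M(x)\|_2$ via the Schwarz inequality for u.c.p.\ maps, so that $\|E_M(x)\|_2\geq\|x\|_2$ forces $x=E_M(x)$. Both arrangements are correct and rest on the same two ingredients; yours is marginally more direct once the Cauchy--Schwarz reduction above is in place, and has the minor advantage of not needing $x$ to be unitary.
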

Note, in particular, that if $B$ is a MASA in $M$ then the lemma shows that if $B\nprec_M A$ then the relative commutant $B'\cap \tilde{M}$ is just $B$. 
\begin{proof}
Since $B\nprec_M A$ we can find a sequence of unitaries $v_n\in B$ with Fourier coefficients going to zero in 2-norm. Let $x\in \U(B'\cap \tilde{M})$ be given and put $y:= x-E_M(x) \in \tilde{M}\ominus M$. We need to show that $x\in M$. Since $x$ and $v_n$ commute we have
\[
1=\|v_n\|_2=\|E_M(v_n)\|_2=\|E_M(x^*v_nx)\|_2 \leq \|E_M(x^*v_n y)\|_2+\|E_M(x^*v_nE_M(x))\|_2.
\]
Note that by Lemma \ref{lem-1} the first term converges to zero. Using the Schwarz inequality for u.c.p.~maps on the conditional expectation $E_M$ we get
\begin{align*}
\|E_M(x^*v_nE_M(x))\|_2^2 &=\tau_M((E_M(x^*v_nE_M(x)))^*E_M(x^*v_nE_M(x)))\\
&\leq \tau_M\circ E_M((x^*v_nE_M(x))^*(x^*v_nE_M(x))) \\
&=\tau_{\tilde{M}}(E_M(x)^*E_M(x))\\
&=\|E_M(x)\|_2^2.
\end{align*}  
Hence  $ 1= \|x\|_2\leq \|E_M(x)\|_2 $ and therefore
\[
1=\|x-E_M(x) +E_M(x)\|_2^2 =\|x-E_M(x)\|_2^2 +\|E_M(x)\|_2^2 \geq \|x-E_M(x)\|_2^2 +1.
\]
This shows $E_M(x)=x$ and thus $x\in M$.  

\end{proof}

Furthermore we need the following lemma.

\begin{lem}\label{lem-63} 

Assume the setup in Theorem \ref{blabla} and assume furthermore $B\nprec_M A$. If for all $x\in B$ we have $\alpha_t(x)w_1=w_1x$ and $\alpha_t(x)w_2=w_2x$ for some elements $w_1,w_2\in \tilde{M}$  then $w_2^*w_1\in B$ and $w_2w_1^*\in \alpha_t(B)$.
\end{lem}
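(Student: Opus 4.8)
The plan is to exploit the two intertwining relations to show that $w_2^*w_1$ and $w_2w_1^*$ commute with the right algebras and then invoke Lemma~\ref{rel-commutant-lem}. First I would compute, for $x\in B$, that $\alpha_t(x)(w_1x^* - $ something$)$... more precisely, from $\alpha_t(x)w_1 = w_1 x$ we get $w_1 x = \alpha_t(x) w_1$, and taking adjoints of the relation for $w_2$, namely $w_2^*\alpha_t(x^*) = x^* w_2^*$, i.e. $w_2^*\alpha_t(x) = x w_2^*$ (replacing $x^*$ by $x$). Multiplying, $w_2^* w_1 x = w_2^* \alpha_t(x) w_1 = x w_2^* w_1$, so $w_2^*w_1$ commutes with every $x\in B$; hence $w_2^*w_1 \in B'\cap\tilde M$. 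Similarly $w_2 w_1^* \alpha_t(x) = w_2 x w_1^* = \alpha_t(x) w_2 w_1^*$ (using $w_i x = \alpha_t(x) w_i$ and its adjoint form $x w_i^* = w_i^*\alpha_t(x)$), so $w_2 w_1^*$ commutes with every $\alpha_t(x)$, $x\in B$, i.e. $w_2 w_1^* \in \alpha_t(B)'\cap \tilde M$.

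Next, since $B\nprec_M A$, Lemma~\ref{rel-commutant-lem} gives $B'\cap\tilde M\subseteq M$, and because $B$ is assumed to be a MASA (it is Cartan in $M$ in the ambient setup of Theorem~\ref{blabla}) the remark after that lemma yields $B'\cap\tilde M = B$; hence $w_2^*w_1\in B$. For the second statement, I would apply $\alpha_t^{-1}=\alpha_{-t}$: since $\alpha_t$ is an automorphism of $\tilde M$ carrying $M$ to $M$ and $A$ to... well, $\alpha_t$ fixes $D\htens A$ pointwise so in particular $\alpha_t(A)=A$, and $\alpha_t(B)\nprec_M \alpha_t(A)=A$ follows by transporting Popa's criterion through the automorphism $\alpha_t$ (condition (iii) of Theorem~\ref{fundamental-thm} is manifestly preserved by applying an automorphism of $M$ that fixes $A$). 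Therefore Lemma~\ref{rel-commutant-lem} applies with $\alpha_t(B)$ in place of $B$, giving $\alpha_t(B)'\cap\tilde M = \alpha_t(B)$, and hence $w_2 w_1^*\in\alpha_t(B)$.

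The main subtlety I anticipate is justifying that $\alpha_t(B)\nprec_M A$ (equivalently that $\alpha_t(B)$ is still a MASA in $M$ with the relevant non-embedding property) — one must be careful that $\alpha_t$, while an automorphism of $\tilde M$, restricts to an automorphism of $M$: indeed $\alpha_t(M)=\alpha_t((1\tens A)\rtimes\Gamma)$, and since $\alpha_t(u_g)=(\omega(tb(g))\tens 1)u_g\notin M$ in general, $\alpha_t$ does \emph{not} preserve $M$. So the clean argument for the second containment is instead to apply $\alpha_{-t}=\alpha_t^{-1}$ directly to the relation $\alpha_t(x)w_2w_1^* = w_2w_1^*\alpha_t(x)$: this says $w_2w_1^*$ commutes with $\alpha_t(B)$, and I can simply repeat the relative-commutant argument of Lemma~\ref{rel-commutant-lem} verbatim using the unitaries $\alpha_t(v_n)\in\alpha_t(B)$ in place of $v_n$ — one needs that the Fourier coefficients (relative to $M=A\rtimes\Gamma$) of $\alpha_t(v_n)$ still go to zero in $2$-norm, which holds because $\alpha_t(v_n) = \sum_g (\omega(tb(g))\tens (v_n)_g)u_g$ has $g$-th coefficient $E_M(\alpha_t(v_n)u_g^*) = \tau(\omega(tb(g)))(v_n)_g$, whose $2$-norm is $e^{-t^2\|b(g)\|^2}\|(v_n)_g\|_2 \le \|(v_n)_g\|_2\to 0$. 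With that observation the second assertion follows exactly as the first.
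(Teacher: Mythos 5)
Your proof of the first containment is correct and is exactly the paper's argument: the computation $w_2^*w_1x=w_2^*\alpha_t(x)w_1=xw_2^*w_1$ places $w_2^*w_1$ in $B'\cap\tilde{M}$, which equals $B$ by Lemma \ref{rel-commutant-lem} together with the fact that $B$ is maximal abelian in $M$.

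The second containment is where the proposal has a genuine gap. You correctly arrive at $w_2w_1^*\in\alpha_t(B)'\cap\tilde{M}$, and you correctly observe that $\alpha_t$ does not restrict to an automorphism of $M$; but the repair you then commit to --- rerunning the proof of Lemma \ref{rel-commutant-lem} with the unitaries $\alpha_t(v_n)$ in place of $v_n$ --- does not work. That proof opens with the identity $1=\|v_n\|_2=\|E_M(v_n)\|_2$, which fails for $\alpha_t(v_n)\notin M$ (indeed $\|E_M(\alpha_t(v_n))\|_2^2=\sum_{g}e^{-2t^2\|b(g)\|^2}\|(v_n)_g\|_2^2<1$ in general), and Lemma \ref{lem-1} is only stated for sequences lying in $M$. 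Worse, the conclusion that argument would produce, namely $\alpha_t(B)'\cap\tilde{M}\subseteq M$, is generally false (the relative commutant turns out to be $\alpha_t(B)$, which need not sit inside $M$), and even if it held it would not give membership in $\alpha_t(B)$. The correct completion is the one you gesture at but do not execute: since $\alpha_t\in\Aut(\tilde{M})$, apply $\alpha_{-t}$ to the commutation relation to see that $\alpha_{-t}(w_2w_1^*)$ commutes with every $x\in B$, hence lies in $B'\cap\tilde{M}=B$ by the already-established first part, i.e. $w_2w_1^*\in\alpha_t(B)$; equivalently, $\alpha_t(B)'\cap\tilde{M}=\alpha_t(B'\cap\tilde{M})=\alpha_t(B)$. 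No new Fourier-coefficient estimate is needed, and this is what the paper's ``the other statement is proven similarly'' amounts to.
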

\begin{proof}
Let $x\in B$ be given.
\[
(w_2^*w_1)x=w_2^*\alpha_t(x)w_1=(\alpha_t(x^*)w_2)^*w_1=(w_2x^*)^*w_1= x(w_2^*w_1).
\]
Hence $w_2^*w_1$ is in $B'\cap\tilde{M}$ which, by Lemma \ref{rel-commutant-lem}, equals $B$ since $B$ is Cartan. The other statement is proven similarly.
\end{proof}

Lastly we will  need the following little result of more general nature.
\begin{lem}\label{p-lig-en-lem}
Let $M$ and $N$ be $\twoone$-factors included in a common finite von Neumann algebra $\tilde{M}$ and assume that there exists a projection $p\in N$ and a unitary $x\in \tilde{M}$ such that $x (pNp)x^*\subseteq M$. Then there exists a unitary $z\in \tilde{M}$ such that $zNz^*\subseteq M$
\end{lem}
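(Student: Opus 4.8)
The plan is to upgrade the partial embedding $x(pNp)x^*\subseteq M$ to a global one by a standard maximality/exhaustion argument in the finite von Neumann algebra $\tilde M$. First I would set up the family of partial isometries: consider the set $\mathcal{S}$ of pairs consisting of a projection $q\in N$ and a unitary-like partial isometry $u\in\tilde M$ with $u^*u\in N$, $uu^*\in M$, $u^*u=$ (the support of $q$ in a suitable sense), and $u(qNq)u^*\subseteq M$. The hypothesis gives one nonzero such element (take $q=p$ and $u$ to be $x$ composed with a cutdown, so that $uu^*=x p x^*\in M$, $u^*u=p$). Since $N$ is a $\twoone$-factor, all its nonzero projections are ``large'' in the trace sense, and this is what lets us glue.

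The key step is to piece together a maximal family. By Zorn's lemma choose a maximal family $\{(q_i,u_i)\}_{i\in I}$ with the $q_i$ mutually orthogonal projections in $N$, the $u_i$ partial isometries in $\tilde M$ with $u_i^*u_i=q_i$, the ranges $u_iu_i^*=:f_i\in M$ mutually orthogonal, and $u_i(q_iNq_i)u_i^*\subseteq M$ together with compatibility so that $u:=\sum_i u_i$ satisfies $u(\,(\sum_i q_i)N(\sum_i q_i)\,)u^*\subseteq M$ — here one has to be slightly careful that the individual embeddings assemble into a single $*$-homomorphism on the corner, which works because the $f_i$ are orthogonal in $M$. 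I claim maximality forces $\sum_i q_i=1$: if $q:=1-\sum_i q_i\neq 0$, then since $N$ is a $\twoone$-factor $q$ is Murray–von Neumann equivalent in $N$ to a subprojection $q'\le p$ of the projection we started with, via a partial isometry $w\in N$; then $x w^*(\,\cdot\,)w x^*$ embeds $qNq$ into $M$, giving a pair we could adjoin, contradicting maximality. (One also needs the ranges to avoid the $f_i$ already used; since $M$ is a $\twoone$-factor there is always room, after a further Murray–von Neumann cutdown inside $M$ using that $\tau(1-\sum f_i)=\tau(1-\sum q_i)>0$.) Hence $u^*u=1$, i.e. $u$ is a unitary (note $uu^*=\sum f_i$ is a projection in $M\subseteq\tilde M$, but since $\tau(uu^*)=\tau(u^*u)=1$ it is $1$), and $uNu^*\subseteq M$, so $z:=u$ works.

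The main obstacle I anticipate is the bookkeeping in the gluing step: one must ensure that the locally defined embeddings $\phi_i:q_iNq_i\to f_iMf_i$ genuinely combine to a $*$-homomorphism on the corner $(\sum q_i)N(\sum q_i)$ of the \emph{factor} $N$, which requires knowing that elements of $N$ with support dominated by $\sum q_i$ are determined by their ``blocks'' $q_i n q_j$ and that $\mathrm{Ad}(u)$ respects products $q_i n q_j\cdot q_j m q_k$. This is automatic once we demand the partial isometries $u_i$ be mutually orthogonal on \emph{both} sides and set $u=\sum u_i$ (SOT-convergent), since then $u$ is itself a single partial isometry and $\mathrm{Ad}(u)$ is a genuine $*$-homomorphism on $u^*u\,N\,u^*u$; the only real content is checking the image lies in $M$, which follows block-by-block from the defining property of each $u_i$ together with $u_i u_i^* \in M$. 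Everything else — invoking that $\twoone$-factors have comparable projections of every trace value, and that the defect projections $1-\sum q_i$ and $1-\sum f_i$ have equal (positive, if nonzero) trace so a further cutdown makes room — is routine Murray–von Neumann comparison theory.
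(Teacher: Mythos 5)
There is a genuine gap in the gluing step, and it sits exactly where you flagged the ``main obstacle.'' If $q=\sum_i q_i$, the corner $qNq$ is not the direct sum of the diagonal corners $q_iNq_i$: it also contains the off-diagonal blocks $q_i n q_j$ for $i\neq j$, and $\Ad(u)$ sends such a block to $u_i(q_i n q_j)u_j^*$. Your per-block hypothesis $u_i(q_iNq_i)u_i^*\subseteq M$ says nothing about these cross terms, and mutual orthogonality of the $u_i$ on both sides only guarantees that $\Ad(u)$ is a $*$-homomorphism on $qNq$ --- it does not place the image in $M$. The same problem recurs at the maximality step: when you adjoin a new pair built from $xw^*(\cdot)wx^*$, the cross terms between the new partial isometry and the old ones have no reason to land in $M$ if the old $u_i$ were arbitrary members of your family $\mathcal S$. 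So the Zorn argument, as set up, does not close.

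The repair is to force every block, diagonal and off-diagonal alike, through the one corner you control. This is what the paper does: since $N$ is a $\twoone$-factor, pick $n$ with $\tfrac1n\leq\tau(p)$, write $1=p_1+\dots+p_n$ in $N$ with partial isometries $v_i\in N$ satisfying $v_i^*v_i=p_i$ and $v_i^{\phantom{*}}v_i^*=p_0$ for a \emph{single} subprojection $p_0\leq p$; then $v_i x v_j^*\in pNp\subseteq x^{-1}Mx$ for \emph{all} $i,j$ (after replacing $M$ by $x^*Mx$), and one spreads the images out orthogonally by conjugating with partial isometries $w_i\in M$ having $w_i^{\phantom{*}}w_i^*=p_0$ and orthogonal sources summing to $1$. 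The element $z=\sum_i w_i^*v_i^{\phantom{*}}$ is then a unitary with $zNz^*\subseteq M$ by a direct two-index computation. If you want to keep your exhaustion framework, you must build the common-range condition ($u_i$ factoring through $pNp$ via $x$) into the definition of $\mathcal S$; but once you do, the family is finite and explicit and Zorn's lemma is unnecessary.
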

\begin{proof}
By replacing $M$ with $x^*Mx$ we may assume that $pNp\subseteq M$.  Choose an integer $n$ such that $\frac1n\leq \tau(p)$ and decompose $1$ as the orthogonal sum $p_1+\dots + p_n$ of projections in $N$ that are all equivalent to a fixed subprojection $p_0\leq p$ of trace $\frac1n$. That is,  there exists $v_1,\dots, v_n\in N$ with $ v_i^*v_i^{\phantom{*}}=p_i$ and $v_i^{\phantom{*}}v_i^*=p_0$. Now we do the same thing in $M$: choose orthogonal projections $q_1,\dots, q_n\in M$ and partial isometries $w_1,\dots, w_n\in M$ such that
$q_1+\cdots +q_n=1$, $w_i^*w_i^{\phantom{*}}=q_i$ and $w_i^{\phantom{*}}w_i^*=p_0$. Then putting $z:=\sum_{i=1}^n w_i^*v_i^{\phantom{*}} \in \tilde{M}$ we get, for $x\in N$, that
\[
z x z^* =\sum_{i,j=1 }^n  w_i^* (v_i^{\phantom{*}}x v_j^*)w_j^{\phantom{*}} = \sum_{i,j=1 }^n  w_i^* \underbrace{(pv_i^{\phantom{*}}x v_j^*p)}_{\in pNp\subseteq M }w_j\in M ,
\]
so that $zNz^*\subseteq M$. Furthermore, since the source spaces of the $v_i$'s are orthogonal we get
\[
zz^*=\sum_{i,j=1}^n w_i^*v_i^{\phantom{*}}v_j^*w_j^{\phantom{*}}=\sum_{i=1}^nw_i^* v_i^{\phantom{*}}v_i^* w_i^{\phantom{*}}=\sum_{i=1}^n  w_i^* w_i^{\phantom{*}} w_i^*w_i^{\phantom{*}}=\sum_{i=1}^n q_i=1, 
\]
and similarly $z^*z=1$. 
\end{proof}
We are now equipped to prove the main result of this section.
\begin{proof}[Proof of Theorem \ref{blabla}]
Assume that $A$ and $B$ are not conjugate. Since they are both Cartan in $M$ this implies (by Corollary \ref{cartan-virtual})  that $B\nprec_M A$ and (by Lemma \ref{rel-commutant-lem}) therefore $B'\cap \tilde{M}\subset M$. Since $B\subset M$ is Cartan it is in particular a MASA in $M$ and thus $B'\cap\tilde{M}=B$. Now, let $u\in \N_M(B)$ be given. For $x\in B$ we have
\begin{align*}
\alpha_t (u^*x u)w=w(u^*xu) 
\end{align*}
 and rearranging this equality we get $\alpha_t(x)(\alpha_t(u)wu^*)=(\alpha_t(u)wu^*)x$. Thus $\alpha_t(u)wu^*$ also implements
$\alpha_t$ on $B$ and by Lemma \ref{lem-63}  we therefore get $w^*(\alpha_t(u)wu^*)\in B$. Thus
\[
w^* \alpha_t(u)w^*\in Bu\subset M.
\]
Since $u\in \N_M(B)$ was arbitrary and $B$ is Cartan this implies $w^*\alpha_t(M)w\subseteq M$. From Lemma \ref{lem-63} we also get  $ww^*\in \alpha_t(B)$ and hence there exists a projection $p\in B$ such that $ww^*=\alpha_t(p)$. Extending $w$ to a unitary $z\in \tilde{M}$ with $w=\alpha_t(p)z$ \footnote{Since $\tilde{M}$ is  finite $1-ww^*$ and $1-w^*w$ are also equivalent \cite[Propositio V.1.38]{takesaki-1} and therefore this is possible.} we now get $z^*\alpha_t(pMp)z =w^*\alpha_t(M)w\subseteq M$. Applying Lemma \ref{p-lig-en-lem} we may assume that $p=1$.
Then we have $z^*\alpha_t(M)z\subseteq M$ and we now show that this cannot be the case by proving the following:\\

\noindent \textbf{Claim:} If $g_n\in \Gamma$ and $\|b(g_n)\| \To \infty$ then $\| E_M(z_1\alpha_t(u_{g_n})z_2)\|_2\To 0$ for all $z_1,z_2\in \tilde{M}$.\\

\noindent  Given the claim the desired contradiction easily follows: Since the cocycle $b\colon \Gamma \to H$ is assumed unbounded we can find a sequence $g_n\in \Gamma$ such that $\|b(g_n)\|\to \infty$ and since $z^*\alpha_t(M)z\subseteq M$ we have
\[
1= \|z^*\alpha_t(u_{g_n})z\|_2 =\|E_M(z^*\alpha_t(u_{g_n})z)\|_2\
\]
contradicting the claim. 
\begin{proof}[Proof of claim]
By the $M=(1\tens A)\rtimes \Gamma$-linearity of $E_M$ it suffices to look at $z_1,z_2\in D\tens 1\subset (D\htens A)\rtimes \Gamma=:\tilde{M}$. Since elements of the form $\omega(\xi)\tens 1$ spans a weakly dense $*$-subalgebra in $D\tens 1$ it furthermore suffices to treat the case when $z_1=\omega(\xi)\tens 1$ and $z_2=\omega(\eta)\tens 1$. Then we have
\begin{align*}
z_1\alpha_t(u_{g_n})z_2 &= (\omega(\xi) \tens 1) (\omega(tb(g_n))\tens 1) u_{g_n}(\omega(\eta)\tens 1)\\
&=(\omega(\xi +t b(g_n) +\pi_{g_n}(\eta))\tens 1)u_{g_n},
\end{align*}
and hence
\[
\|E_M(z_1\alpha_t(u_{g_n}))z_2\|_2= \exp(-\|\xi +tb(g_n) + \pi_{g_n} (\eta)\|^2 ).
\]
Since the sequence $\pi_{g_n}(\eta)$ is bounded in 2-norm  and  $\lim_n\|b(g_n)\|=\infty$ the last expression goes to zero and the claim follows.
\end{proof}
This concludes the proof of Theorem \ref{blabla}.
\end{proof}
\begin{rem}
Although hidden in the proof of Theorem \ref{blabla}, the claim above shows exactly why the unboundedness of the cocycle $b$ is important. Heuristically, it states that the unitaries in $M$ arising from a sequence in $\Gamma$ exhibiting the unboundedness of $b$ are being moved  more and more towards the orthogonal complement of $M$ in $\tilde{M}$ by the deformation $\alpha_t$ --- even after conjugation by elements from $\tilde{M}$! This is the underlying reason why we cannot conjugate $\alpha_t(M)$ back into $M$ which, as we just saw, leads to the desired contradiction in the proof of Theorem \ref{blabla}.

\end{rem}

\section{The grand finale}
To get the main result we now just have to assemble the components obtained in the previous sections. Recall that we are after  the following result.

\begin{thm*}[Chifan-Peterson, \cite{chifan-peterson}]
Assume that $\Gamma$ is a discrete group acting ergodically, essentially freely and measure preserving on a standard probability space $(X,\mu)$ and put $M=L^\infty(X)\rtimes \Gamma$. Assume furthermore that $\Lambda$ is another (essentially free, ergodic p.m.p.) action on another probability space $(Y,\nu)$  such that $M=L^\infty(Y)\rtimes \Lambda$. If
\begin{itemize}
\item there exists an infinite subgroup $\Gamma_0< \Gamma$ with property $\T$ and
\item a representation $\pi\colon \Gamma\to O(H)$ on a real Hilbert space $H$ with an unbounded 1-cocycle $b\colon \Gamma \to H$,
\end{itemize}
then there exists a unitary $u\in M$ such that $uL^{\infty}(X)u^*=L^\infty(Y)$. 
\end{thm*}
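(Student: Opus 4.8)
The plan is to assemble the main results of the preceding sections; the grand finale itself involves no new idea, only bookkeeping.

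First I would fix notation: set $A := L^\infty(X)$ and $B := L^\infty(Y)$, so that $M = A\rtimes\Gamma = B\rtimes\Lambda$, and record that both $A$ and $B$ are Cartan subalgebras of $M$. For $A$ this is the standing example; for $B$ it follows from the hypothesis that the $\Lambda$-action is essentially free and ergodic (freeness yields maximal abelianness, and the canonical unitaries $(v_h)_{h\in\Lambda}$ lie in $\N_M(B)$ and, together with $B$, generate $M$). In particular each $v_h$ belongs to $\N_M(B)$, since $v_h^*Bv_h = B$. Next I would bring in the apparatus of Section \ref{deform-from-cocycles}: using the unbounded $1$-cocycle $b$ and the representation $\pi\colon\Gamma\to O(H)$ --- which, as throughout Sections \ref{deform-from-cocycles}--\ref{conjugacy-section} and in particular for Lemma \ref{lem-1}, we take to be mixing --- form the enlarged finite von Neumann algebra $\tilde{M} = (D\htens A)\rtimes\Gamma\supseteq M$ together with its malleable deformation $(\alpha_t)_{t\in\RR}$.

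The heart of the argument is then a three-step chain of invocations. Step one: apply the transfer-of-rigidity Theorem \ref{transfer-thm} with $\delta := \tfrac{1}{201}$; since $\Gamma$ contains the infinite property $\T$ subgroup $\Gamma_0$, this produces a parameter $t>0$ and elements $h_n\in\Lambda$ such that $v_n := v_{h_n}$ satisfies $\|\alpha_t(v_n)-v_n\|_2 \leq \tfrac{1}{201} < \tfrac{1}{200}$ for all $n$ and has Fourier coefficients (relative to the decomposition $M = A\rtimes\Gamma$) tending to $0$ in $2$-norm. Step two: apply Theorem \ref{implementation-thm} to the abelian subalgebra $B\subseteq M$, with this $t$ and this sequence $(v_n)$ in $\N_M(B)$; its hypotheses (a) and (b) are exactly the two properties of $v_n$ just produced, the strict inequality $<\tfrac{1}{200}$ being the reason for choosing $\delta=\tfrac{1}{201}$, so we obtain a non-zero partial isometry $w\in\tilde{M}$ with $\alpha_t(x)w = wx$ for all $x\in B$. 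Step three: apply Theorem \ref{blabla} to $B\subseteq M = A\rtimes\Gamma$ (which is Cartan), with the same $t$ and this $w$; it returns a unitary $u\in\U(M)$ with $B = uAu^*$, that is, $uL^\infty(X)u^* = L^\infty(Y)$, which is the assertion of the theorem.

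I do not expect any real obstacle at this stage. The only point needing a moment's attention is that the parameter $t$ and the sequence $v_n$ handed over by Theorem \ref{transfer-thm} are literally what Theorem \ref{implementation-thm} requires, and that the partial isometry it returns is literally what Theorem \ref{blabla} requires --- all of which is immediate. The genuine weight of the theorem has already been shouldered in the earlier sections: in the transfer-of-rigidity Theorem \ref{transfer-thm}, where the property $\T$ of $\Gamma_0$ enters through $Q = L\Gamma_0$ and the non-embedding Lemma \ref{non-embedding-lem}; in the implementation Theorem \ref{implementation-thm}, whose proof via Lemma \ref{c-lem} is where the mixing of $\pi$ and the explicit constant $\delta=\tfrac{1}{200}$ do their work; and in Theorem \ref{blabla}, where the unboundedness of the cocycle $b$ is what forces the final contradiction, and hence the conjugacy $B = uAu^*$.
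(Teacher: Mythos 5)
Your proposal is correct and follows exactly the same route as the paper's own proof: construct $\tilde{M}$ and $\alpha_t$ from the cocycle, invoke Theorem \ref{transfer-thm} to verify the hypotheses of Theorem \ref{implementation-thm}, and feed the resulting partial isometry into Theorem \ref{blabla}. Your extra care about choosing $\delta$ small enough to meet the strict bound $\tfrac{1}{200}$ and about the $v_h$ lying in $\N_M(B)$ only makes explicit what the paper leaves implicit.
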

\begin{proof}
From the assumptions we obtain (as in Section \ref{deform-from-cocycles}) the  von Neumann algebra $\tilde{M}$ containing $M$ and the family of automorphisms $\alpha_t$ on it. Furthermore, by Theorem \ref{transfer-thm} the assumptions in Theorem \ref{implementation-thm} are fulfilled and we therefore get a partial isometry $w\in \tilde{M}$ such that $\alpha_t(x)w=wx$ for all $x\in B:=L^\infty(Y)$.  Applying Theorem \ref{blabla} now gives that $L^\infty(X)$ and $L^{\infty}(Y)$ are unitarily conjugate inside $M$.

\end{proof}

\def\cprime{$'$}

\end{document}